\documentclass[8pt]{article}
\usepackage[utf8]{inputenc}
\usepackage[usenames,dvipsnames,svgnames,table]{xcolor}	
\usepackage[]{amsmath}
\usepackage{cases}
\usepackage{empheq}
\usepackage[colorlinks=true]{hyperref}
\hypersetup{colorlinks,
citecolor=brown, linkcolor=blue, linktocpage
}
\usepackage{amssymb}
\usepackage{bbm}
\usepackage{enumerate}   
\usepackage{geometry}
\usepackage{amsfonts}
\usepackage{color}
\usepackage[]{amsthm} 
 \usepackage[shortlabels]{enumitem}
 \usepackage{cleveref}

\newtheorem{theorem}{Theorem}[section]
\newtheorem{lemma}[theorem]{Lemma}
\newtheorem{cor}[theorem]{Corollary}
\newtheorem{proposition}[theorem]{Proposition}
\newtheorem{mydef}[theorem]{Definition}

\newtheorem{remark}[theorem]{Remark}
\newtheorem*{hyp}{\textit{Hypothesis}}
\bibliographystyle{acm}
\geometry{hmargin=2.5cm,vmargin=2cm}
\def\E{\mathbb{E}}
\def\R{\mathbb{R}}
\def\P{\mathbb{P}}
\def\Q{\mathbb{Q}}
\def\N{\mathbb{N}}

\def\1{\mathbbm{1}}
\def\F{\mathcal{F}}

\def\D1{\frac{\partial}{\partial x}}

\newcommand{\Cc}{\mathcal C}

\newcommand{\Pp}{\mathcal P}

\usepackage{xspace}

\title{A new McKean-Vlasov stochastic interpretation of the 
parabolic-parabolic Keller-Segel model: The two-dimensional case. }
\author{Milica Toma\v{s}evi\'c\footnote{ CMAP, CNRS, Ecole Polytechnique, Institut Polytechnique de Paris, 
91128 Palaiseau, France; milica.tomasevic@polytechnique.edu.} }
\date{}

\begin{document}
\maketitle
\noindent
\textbf{Abstract:} In Talay and Tomasevic \cite{Mi-De} we proposed a new stochastic interpretation of the parabolic-parabolic Keller-Segel system without cut-off via a McKean-Vlasov stochastic process. 
The latter was defined through an original type of 
interaction kernel which involved, in a singular way, all its past time marginals.
In the present paper, we study this McKean-Vlasov representation in the two-dimensional case. In this setting, there exists a possibility of a blow-up in finite time for the Keller-Segel system if some parameters of the model are large. Indeed, we prove the global in time well-posedness of the McKean-Vlasov process under some constraints involving a parameter of the model and the initial datum. Under these constraints, we also prove the global well-posedness for the Keller-Segel model in the plane. \\
\textbf{Key words:}  Keller–Segel system; Singular McKean-Vlasov non-linear stochastic differential equation.\\
\textbf{Classification:} 60H30, 60H10.


\section{Introduction}
The standard $d$-dimensional parabolic--parabolic Keller--Segel model for chemotaxis reads
\begin{equation}
\label{KSd}
\begin{cases}
&  \partial_t \rho(t,x)=\nabla \cdot (\frac{1}{2}\nabla \rho -  
\chi\rho \nabla 
c)(t,x), \quad t>0, \ x \in \R^d, \\
 & \alpha~\partial_t c(t,x) = \frac{1}{2}\triangle c(t,x)  -\lambda 
 c(t,x) + \rho(t,x),  \quad 
 t>0, \ x \in \mathbb{R}^d.\\
 & \rho(0,x)=\rho_0(x),~~~c(0,x)=c_0(x).
\end{cases}
\end{equation}  
It describes the time evolution of the density $\rho_t$ of 
a cell population and of the concentration $c_t$ of a chemical 
attractant. The parameter $\chi>0$ is called the chemotactic sensitivity and, together with the total mass $M:=\int \rho_0(x)\ dx$, plays an important role in the well-posedness theory for \eqref{KSd}. For a very thorough review of this theory for the standard
Keller--Segel model and its variations, see the reviews of Horstmann \cite{Horstmann1,Horstmann2}. For a review of the results obtained in past 15 years, see e. g. Toma\v{s}evi\'c \cite{Mi-De-2D}.

When $\alpha = 0$, the system is in its parabolic-elliptic version, while when $\alpha=1$ (more generally $\alpha> 0$) the system is known as the parabolic-parabolic Keller-Segel model.

Our goal is to provide, in an original manner, a probabilistic interpretation of the parabolic-parabolic Keller-Segel system and new existence and uniqueness results for the PDE in $d=2$. We emphasize here that this is the first result of this type on the standard parabolic-parabolic Keller-Segel model. That is, to the best of our knowledge, our stochastic representation of the system~\eqref{KSd} has not been studied so far in the literature. 

In the Mc-Kean Vlasov context, the originality of our stochastic interpretation comes from its unusual interaction with the law of the process. Namely, the process interacts with all the past time marginals of its probability distribution through a functional involving a singular kernel. That is why it cannot be analysed by means of standard coupling methods or Wasserstein distance contractions. 


As we will see in the next section, with our choice of space for the family $(\rho_t)_{t\geq 0}$,  the drift of the process will not be in the framework of Krylov and R\"ockner \cite{KryRoc-05}. That is, it will not be possible to control the drift with a function whose $L^p((0,T); L^q(\R^2))$-norm is finite for $1/p+1/q<1/2$. Nevertheless, we will construct a solution to the associated non-linear martingale problem (NLMP). Thus, our strategy will not be based on the strategy in~\cite{KryRoc-05}.

In Talay and Tomasevic \cite{Mi-De}, we propose the following stochastic representation of the parabolic-parabolic version of \eqref{KSd}: The family $(\rho_t)_{t\geq 0}$ is seen as a family of one dimensional time marginal distributions of the law of a McKean-Vlasov stochastic process and the family $(c_t)_{t \geq 0}$ as its transformation. Namely, we consider the following stochastic differential equation:
\begin{equation}
\label{NLSDEd}
\begin{cases}
& dX_t= b^{(d)}_0(t,X_t)dt + \Big\{ \int_0^t (K^{(d)}_{t-s}\ast 
\rho_s)(X_t)ds\Big\} dt
+  dW_t, \quad t> 0, \\
& \rho_s(y)dy:=  \mathcal{L}( X_{s}),\quad X_0 \sim \rho_0(x)dx,
\end{cases}
\end{equation}
where $$K^{(d)}_t(x):=\chi  e^{-\lambda t}\nabla( g^{(d)}_t(x)) ~~\text{and}~~ b^{(d)}_0(t,x):=\chi e^{-\lambda t}  
\nabla (c_0\ast g^{(d)}_t(x)).$$ Here, $g_t^{(d)}(x):=\frac{1}{(2\pi 
t)^{d/2}}e^{-\frac{|x|^2}{2t}}$, $(W_t)_{t\geq 0}$ is a $d$-dimensional Brownian motion on a 
filtered probability space $(\Omega, \F, \P, (\F_t) )$ and  $X_0$ is an 
$\R^d$-valued $\F_0-$measurable random variable. 
Then, we define
$$ c(t,x) :=e^{-\lambda t}(g^{(d)}_t \ast c_0)(x) + \int_0^t 
 \rho_{t-s}\ast e^{-\lambda s} g^{(d)}_s(x)~ds. $$
 The couple $(\rho,c)$ is our stochastic interpretation of the system in \eqref{KSd}.
 
%
%
%

The unusual interaction with the law of the process leads to a system of interacting particles with an uncommon interaction. Namely, the particle system is of of non-Markovian nature where each particle interacts in the current time with all the past of all the other particles through a singular functional involving the kernel $K^{(d)}$.
 

In \cite{Mi-De}, the authors overcome the above mentioned difficulties and validate the above stochastic interpretation in the framework of $d=1$ with no constrains on the parameters of the model. That is to say, under the assumptions that $\rho_0$ is a finite measure on $\R$ and  that $c_0$ belongs to $\Cc_b^1(\R)$, the global (in time) well-posedness of the Mc-Kean Vlasov SDE \eqref{NLSDEd} in $d=1$ is proven. As a consequence, the same holds for the Keller-Segel PDE. This result generalizes the results in Osaki and Yagi \cite{OsakiYagi} and Hillen and Potapov \cite{HillenPotapov}. 
One of the key points in \cite{Mi-De} is that the one--dimensional kernel $(K^{(1)})$ belongs to the space $L^1((0,T); L^1(\R))$.  Then, by the help of precise $L^\infty(\R)$-norm density estimates the singularity is tamed. Namely, Picard's iteration procedure is used to exhibit a weak solution to \eqref{NLSDEd}. In each step the $L^\infty([0,T]\times \R)$-norm of the drift and $L^1((0,T];L^\infty(\R))$-norm of the marginal densities were controlled simultaneously. These controls were obtained thanks to a probabilistic method which exhibits sharp density estimates for one dimensional It\^o processes with bounded drift term (see Qien and Zheng \cite{QianZheng} and \cite{Mi-De}). 

Furthermore, in $d=1$,
Jabir \textit{et. al} \cite{JTT} prove the well-posedness of the interacting particle system associated to \eqref{NLSDEd} in $d=1$ and its propagation of chaos. The non-Markovian nature of the system is treated with techniques based on Girsanov theorem (see \cite{KryRoc-05}). The calculation was
based on the fact that the kernel $K^{(1)}$ is in $L^1((0,T);L^2(\R))$. In order to get the tightness in number of particles and the propagation of chaos, the authors needed to use the so called ``partial" Girsanov transformations removing a finite number of particles from the system.
 
 Contrary to the one-dimensional case, a blow up may occur for the Keller-Segel system in the two-dimensional setting if the parameter $\chi$ is large. In the parabolic-elliptic version of the system, its behaviour has been completely understood. That is, the system exhibits the ``threshold" behaviour: if $M\chi<8\pi$ the solutions are global in time, if $M\chi>8\pi$ every solution blows-up in finite time  (see e.g. Blanchet \textit{et. al} \cite{Blanchet} and Nagai and Ogawa~\cite{NagaiOgawa}).
 
The fully parabolic model does not exhibit the same bahaviour. It has been proved that when $M\chi<8\pi$ one has global existence (see Calvez and Corrias \cite{CalCor2008} and Mizogouchi \cite{Mizo}). However, in Biller \textit{et. al} \cite{BilerCorrias} the authors find an initial configuration of the system in which a global solution in some sense exists  with $M\chi>8\pi$. Then, Herrero and Vel\'azquez \cite{HerreroandVelazquez} construct a radially symmetric solution on a disk that blows-up and develops $\delta$-function type singularities when $M\chi>8\pi$. Finally, unique solution with any positive mass exists when the the parameter $\alpha$ is large enough (Corrias \textit{et. al} \cite{Corrias2014}). Thus, in the case of parabolic-parabolic model, the value $8\pi$ can somehow be understood as a threshold, but in a different sense: under it there is global existence, over it there exists a solution that blows up.

 

In order to shed a new light on the parabolic-parabolic model
 in $d=2$, we study our stochastic representation.
To obtain an existence result for it,
 we will perform a regularization of the singular interaction kernel and combine probabilistic and PDE techniques to exhibit a solution to the NLMP related to \eqref{NLSDEd}.
A condition on the size of parameter $\chi$ will be necessary to obtain some drift and density estimates for the regularized process that are uniform w.r.t. regularization parameter.

 A consequence of the existence result for \eqref{NLSDEd} is the global existence for~\eqref{KSd} in $d=2$. This generalizes the result in \cite{Corrias2014} by removing the assumption on the smallness of the initial datum (for more details see the next section).
 
 The uniqueness of the constructed solution to \eqref{KSd} in $d=2$ is obtained under an additional condition on the size of parameter $\chi$. Then, the uniqueness for NLMP is obtained from uniqueness of the solution to the linearised MP. The latter comes from the so-called transfer of uniqueness from the (linear) Fokker-Planck equation to the (linear) martingale problem (see Trevisan~\cite{Trevisan} and the references therein). 
 
 We conclude this part with a remark concerning the interacting particle system associated to \eqref{NLSDEd} in $d=2$. As on the mean-field level we will work with a drift function that does not fit in the framework of~\cite{KryRoc-05}, the technique, based on Girsanov transformation, used in \cite{JTT} 
  will not work in $2$-d case. The existence of solutions to the non-regularized particle system and its propagation of chaos are still work in progress.
%
%
 
 The plan of the paper is the following: In Section~\ref{sec:main_results} we state our main results and compare them with the above mentioned Keller-Segel literature. In Section~\ref{chd2:Regularization} we present our regularization procedure and we obtain density estimates of the regularized processes independent of the regularization parameter. These estimates enable us to prove in Section~\ref{chd2:Thmproof} 
  the existence of a solution to the NLMP corresponding to \eqref{NLSDEd}.
   Then, in the same section 
 we prove the global existence for the Keller-Segel system in $d=2$. 
 Section \ref{sec:uniq} is devoted to uniqueness. Finally, in Appendix~\ref{sec:1chsmooth} we prove the well-posedness of a smoothed version of~\eqref{NLSDEd}.
 
 In all the paper $C$ will denote a generic constant, $C_p$ will denote a constant depending on a parameter~$p$ and $C(p_1, p_2, \dots)$ will denote a constant depending on parameters $p_1, p_2, \dots$ In addition, from now on we will drop the dimension index in the definitions of the interaction kernel ($K^{(d)}$) and linear drift ($b_0^{(d)}$). 
 \section{Main results}
 \label{sec:main_results}
Let $T>0$. On a filtered probability space $(\Omega, \F, \P, (\F_t) )$, equipped with a $2$-dimensional Brownian motion $(W_t)_{0\leq t \leq T}$, we consider  the following non-linear stochastic process:
\begin{equation}
\label{NLSDEd2}
\begin{cases}
 &dX_t=dW_t + b_0(t,X_t)dt +\chi \Big\{\int_0^t  e^{-\lambda(t-s)} (K_{t-s}\ast p_s)(X_t) \ ds\Big\} dt, \\
& p_s(y)dy:=\mathcal{L}( X_{s}),\quad X_0 \sim \rho_0.
\end{cases}
\end{equation}
Here $X_0$ is an 
$\R^2$-valued $\F_0-$measurable random variable, $g_t$ denotes the probability density of $W_t$ and for $(t,x)\in (0,T]\times \R^2$ we denote
\begin{equation}
\label{def:b0andK}
b_0(t,x):=\chi e^{-\lambda t}(\nabla c_0\ast g_t )(x) \text{ and } K_t(x):= \nabla g_t(x)= -\frac{x}{2 \pi t^2}e^{-\frac{|x|^2}{2t}}.
\end{equation}
Note that $K_t$ is a two dimensional vector. We denote its coordinates by $K^i_t$ with $i=1,2$ and 
$$b^i(t,x;p) :=b^i_0(t,X_t) +\chi \int_0^t e^{-\lambda (t -s)}\int K^i_{t-s}(X_t-y)p_s(y)dy \ ds\ dt. $$
In order to prove the (weak) well-posedness of \eqref{NLSDEd2}, we will solve the associated NLMP. By classical arguments, one can then pass from a solution to this martingale problem to the existence of a weak solution to \eqref{NLSDEd2} (see e.g.~\cite{KaratzasShreve}).

An important issue when formulating the NLMP is to choose an appropriate functional space for the family $(p_s)_{s\geq 0}$ that would enable one to claim that $b(t,x;p)$ is well-defined and integrable in time.
 
 
In one-dimensional framework, $L^1((0,T);L^\infty(\R))$ turned out to be an appropriate functional space for the marginals. However, the increase in the space dimension leads to an increase in the strength of the singularity of the interaction kernel $K$. This has a significant impact on the techniques used in~\cite{Mi-De} to prove the well-posedness of \eqref{NLSDEd} in $d=1$.

  To see this, 
 a generalisation to the multidimensional case of the results in~\cite{QianZheng} can be found in Qian \textit{et al.} \cite{QianRussoZheng} in the case of time homogeneous drifts. There, the authors show that the estimate of the transition density of a  $d$-dimensional stochastic process is a product of one-dimensional estimates provided that the Euclidean norm of the drift vector is uniformly bounded. With the arguments we used in $d=1$, one can extend the results in \cite{QianRussoZheng} to time inhomogeneous drifts. However, under reasonable conditions on $\rho_0$ and $\chi$,  one can only construct local solutions to \eqref{NLSDEd2} using the strategy in~\cite{Mi-De}.  
 
 That is why, a new functional space for the marginals needs to be introduced and the next obvious choice are the $L^q$-spaces. Analysing a priori the mild equation for the one-dimensional time marginals $p_t$, one can note that if $\rho_0 \in L^1(\R^2)$, the term depending on the initial condition gives an $L^q$-norm in space of order $\sim t^{-(1-1/q)}$. Thus, if we were to impose the following Gaussian behaviour for $p_t$: $\sup_{t\leq T} t^{1-1/q}\|p_t\|_{L^q(\R^2)}<\infty $ for a $q>2$, then one would obtain that $\|b(t,\cdot;p)\|_{L^\infty(\R^2)}$ is of order $\sim t^{-\frac{1}{2}}$. 
 
 Therefore, we define the NLMP related to \eqref{NLSDEd2} as follows:
 \begin{mydef}
\label{defMP2d}
Consider the canonical space $\mathcal{C}([0,T];\R^2)$ equipped with its canonical filtration. Let $\Q$ be a probability measure on this canonical space and denote by $Q_t$ its one dimensional time marginals. $\Q$ solves the non-linear 
martingale problem $(MP)$ if:
\begin{enumerate}[(i)]
\item $\Q_0 = \rho_0$.
\item For any $t\in (0,T]$, $\Q_t$ have densities $q_t$ w.r.t. Lebesgue measure on $\R^2$. In addition, they satisfy 
\begin{equation}
\label{dens_Est_d2}
\forall 1<r< \infty ~ \exists  C_r(\chi)>0,~~~\sup_{t\leq T}t^{1-\frac{1}{r}}\|q_t\|_{L^r(\R^2)}\leq C_r(\chi).
\end{equation}
\item For any $f \in C_K^2(\R^2)$ the process $(M_t)_{t\leq T}$, 
defined as
\begin{multline*}
M_t:=f(w_t)-f(w_0)-\int_0^t \Big[\frac{1}{2} \triangle f(w_u)+\nabla f(w_u)\cdot \Big(b_0(u,w_u) \\ +\chi \int_0^u \int K_{u- \tau}(w_u-y) q_\tau(y)dyd\tau\Big)\Big]du
\end{multline*}
is a $\Q$-martingale where $(w_t)$ is the canonical process.  
\end{enumerate}
\end{mydef} 
\begin{remark}Under the condition $c_0 \in H^1(\R^2)$ one has, applying H\"{o}lder's inequality and \eqref{gauss2dLq},
$$\int_0^t|b_0(s,x)|~ds \leq C \|\nabla c_0\|_{L^2(\R^2)} \int_0^t\frac{1}{\sqrt{s}}~ds.$$
Similarly, \eqref{dens_Est_d2} and \eqref{kerneld2Lp} imply 
$$\int_0^t\left|\int_0^s K_{s-u} \ast q_u (x) \ du\right|~ds \leq  C \int_0^t\frac{1}{\sqrt{s}}~ds.$$
Moreover, 
if $c_0 \in H^1(\R^2)$ and if  \eqref{dens_Est_d2} holds, then 
\begin{equation}
\label{driftSpacePreliminary}
\forall 2\leq r\leq \infty ~ \exists  C_r>0,~~~\sup_{t\leq T}t^{\frac{1}{2}-\frac{1}{r}} \|b(t,\cdot;q)\|_{L^r(\R^2)}\leq C_r.
\end{equation}
\end{remark}
Now, we present our first main result.
\begin{theorem}
\label{ch:nlsdeD2th1}
Let $\lambda \geq 0$, $T>0$ and fix a $q\in (2,4)$.  Suppose that $\rho_0$ is a density function. 
 Furthermore, assume that $c_0 \in H^1(\R^2)$. Then, \hyperref[defMP2d]{$(MP)$} admits a solution under the following condition
 \begin{equation}
\label{d2exCond}
A \chi \|\nabla c_0\|_{L^2(\R^2)} + B\sqrt{\chi}<1,
\end{equation}
where 
\begin{align*}
& A =  C_1(q')  C_2(\frac{2q}{q+2})  \beta\left(\frac{3}{2}-\frac{2}{q}, \frac{3}{2}-\frac{1}{q'}\right),\\
& B= 2\sqrt{C_2(q) C_1(q') C_1(1) \beta\left(\frac{3}{2}-\frac{2}{q}, \frac{3}{2}-\frac{1}{q'}\right) \beta \left(1-\frac{1}{q}, \frac{1}{2}\right)}.
\end{align*}
Here  $q'$ is such that $\frac{1}{q}+\frac{1}{q'}=1$ and the functions $C_1(\cdot), C_2(\cdot)$ and $\beta(\cdot,\cdot)$ are defined in Lemma~\ref{lemma:kerneld2Lp}, Lemma~\ref{lemma:gauss2dLq} and Eq.~\eqref{def:beta}, respectively.
\end{theorem}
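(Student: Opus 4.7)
The plan is to prove existence for the NLMP by regularization, uniform a priori estimates, and a compactness/limit argument. I would first replace the singular kernel by a suitably smoothed $K^\varepsilon$ (for instance $K^\varepsilon_t := \nabla g_{t+\varepsilon}$). For each $\varepsilon>0$, Appendix~\ref{sec:1chsmooth} provides a unique strong solution $X^\varepsilon$ to the regularized nonlinear SDE, whose marginal densities $p_t^\varepsilon$ satisfy the mild equation
\begin{equation*}
p_t^\varepsilon \;=\; g_t \ast \rho_0 \;-\; \int_0^t \nabla g_{t-s} \ast \bigl( b^\varepsilon(s,\cdot;p^\varepsilon)\, p_s^\varepsilon \bigr)\,ds.
\end{equation*}
All the difficulty is then concentrated in obtaining an $\varepsilon$-uniform version of \eqref{dens_Est_d2}.

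To this end, I would set $N^\varepsilon(T) := \sup_{t\leq T} t^{1-1/q}\|p_t^\varepsilon\|_{L^q(\R^2)}$ and, using Young's and H\"older's inequalities together with the Gaussian estimates of Lemma~\ref{lemma:gauss2dLq} and the kernel estimates of Lemma~\ref{lemma:kerneld2Lp}, bound each term of the mild equation. The free term yields a constant; the deterministic drift $b_0$ yields a term linear in $N^\varepsilon$ proportional to $\chi\|\nabla c_0\|_{L^2(\R^2)}$ (after placing $\nabla c_0$ in $L^2$ via H\"older with exponent $q'$); the self-interaction term yields, after placing the product $(K_{s-u}\ast p_u^\varepsilon)\,p_s^\varepsilon$ in $L^{2q/(q+2)}$ via H\"older with exponents $(q',q)$, a quadratic term proportional to $\chi (N^\varepsilon)^2$. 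The assumption $q\in(2,4)$ is precisely what renders the resulting Beta integrals $\beta\bigl(\tfrac{3}{2}-\tfrac{2}{q},\,\tfrac{3}{2}-\tfrac{1}{q'}\bigr)$ and $\beta\bigl(1-\tfrac{1}{q},\,\tfrac{1}{2}\bigr)$ finite, producing exactly the constants $A$ and $B$ of the theorem. Altogether, one obtains an inequality
\begin{equation*}
N^\varepsilon(T) \;\leq\; C \;+\; A\,\chi\|\nabla c_0\|_{L^2(\R^2)}\,N^\varepsilon(T) \;+\; B'\,\chi\,\bigl(N^\varepsilon(T)\bigr)^2,
\end{equation*}
where $B$ and $B'$ are related by $B = 2\sqrt{B'C}$. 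Condition \eqref{d2exCond} is then equivalent to the discriminant of the associated quadratic in $N^\varepsilon$ being positive, and combined with $N^\varepsilon(T)\to 0$ as $T\to 0^+$, a standard continuity argument in $T$ confines $N^\varepsilon(T)$ below the smaller of the two positive roots, uniformly in $\varepsilon$. The estimate for general $r\in(1,\infty)$ then follows by a second pass through the mild equation.

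With this uniform bound at hand, \eqref{driftSpacePreliminary} gives $\|b^\varepsilon(t,\cdot;p^\varepsilon)\|_{L^\infty}\lesssim t^{-1/2}$, uniformly in $\varepsilon$, which is integrable in time and yields tightness of the laws $\Q^\varepsilon$ of $X^\varepsilon$ on $\Cc([0,T];\R^2)$ via Aldous--Kolmogorov. Any limit point $\Q$ satisfies condition (i) of Definition~\ref{defMP2d} by construction, condition (ii) by lower semi-continuity of the $L^r$-norms along weak convergence, and condition (iii) by passing to the limit in the martingale identity. Via the Skorokhod representation, this last step reduces to showing convergence of the nonlinear drift integrals $\int_0^u (K_{u-s}\ast p_s^\varepsilon)(w_u)\,ds$ to their limiting counterparts, which is made possible by the uniform density bound \eqref{dens_Est_d2}.

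The main obstacle is the quadratic fixed-point step. Unlike in the one-dimensional case treated in \cite{Mi-De}, where the kernel lies in $L^1((0,T);L^1(\R))$ and Picard iteration closes by a Gronwall argument, the two-dimensional self-interaction produces a genuinely quadratic inequality in $N^\varepsilon$; \eqref{d2exCond} is exactly the discriminant condition for that quadratic, which explains both the $\sqrt{\chi}$ factor and the appearance of two Beta functions in the constants $A$ and $B$.
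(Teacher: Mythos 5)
Your proposal follows essentially the same route as the paper: regularize the kernel, derive from the mild equation a quadratic inequality $0\leq K_1\chi (\mathcal{N}_q^\varepsilon)^2+(K_2\chi\|\nabla c_0\|_{L^2}-1)\mathcal{N}_q^\varepsilon+C_2(q)$ whose two-positive-roots condition is exactly \eqref{d2exCond}, then get tightness from the uniform $t^{-1/2}$ drift bound and pass to the limit in the martingale identity. The one step you assert without justification is $\mathcal{N}_q^\varepsilon(t)\to 0$ as $t\to 0$, which is needed to start the continuity argument and is not obvious for $\rho_0$ merely a density: the paper first proves a crude $\varepsilon$-dependent bound (Lemma~\ref{lemma:firstEst}) and invokes the Brezis--Cazenave result $\limsup_{t\to 0}t^{1-1/r}\|g_t\ast\rho_0\|_{L^r}=0$ (Lemma~\ref{lemma:Brezis}) to close this point.
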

 To prove the above result, we do not apply Picard's iteration procedure since in each iteration step we will need a well-posedness result for a linear SDE whose drift satisfies \eqref{driftSpacePreliminary}. In view of Krylov and R\"{o}ckner \cite{KryRoc-05}, the well-posedness follows from a finite $L^q((0,T);L^r(\R^2))$-norm of the drift with $1/q + 1/r<1/2$.
 Unfortunately, the property in \eqref{driftSpacePreliminary} will imply the opposite condition 
$1/p + 1/r>1/2$ for the same norm to be finite. 
 
 Thus, to prove Theorem~\ref{ch:nlsdeD2th1} we will
 use a regularization method. We prove that the time marginals of the regularized version of \eqref{NLSDEd2} satisfy the property \eqref{dens_Est_d2} with uniform constants with respect to the regularization parameter. That is where the condition \eqref{d2exCond} emerges. Then, the tightness will follow thanks to \eqref{driftSpacePreliminary} for $r=\infty$. It will remain, then, to show that \hyperref[defMP2d]{$(MP)$} admits a solution. The strong well-posedness of the regularized equation is an adaptation of the results in Sznitman~\cite{Sznitman} presented in a general way in the Appendix.
 
In addition, the incompatibility of~\eqref{driftSpacePreliminary} and the condition in \cite{KryRoc-05} makes  us doubt that Girsanov transform techniques would work and that the law of \eqref{NLSDEd2} is absolutely continuous w.r.t. Wiener's measure even under \eqref{d2exCond}.

The next objective is to use Theorem \ref{ch:nlsdeD2th1} to get the well-posedness of the Keller-Segel model in $d=2$. The system reads
\begin{subequations}
\label{KSd2}
\begin{empheq}[left={}\empheqlbrace]{align}
  & \frac{\partial\rho}{\partial t}(t,x) =\nabla \cdot (\frac{1}{2}\nabla \rho(t,x) - \chi \rho(t,x) \nabla c(t,x)), \quad  t>0,~~x \in \R^2, 
  \label{KSd21} 
  \\
  &  \frac{\partial c}{\partial t}(t,x) =  
  \frac{1}{2}\triangle c(t,x) - \lambda c(t,x) + \rho(t,x), \quad t>0,~~x \in \R^2, 
  \label{KSd22}\\
  &  \rho(0,x)= \rho_0(x),~~~c(0,x)= c_0, \nonumber
\end{empheq}
\end{subequations}
where $\chi>0$ and $\lambda\geq 0$. Notice that the two diffusion coefficient are deliberately chosen to be equal to $\frac{1}{2}$ in order to have unit diffusion coefficient and standard Gaussian kernel in the formulation of \eqref{NLSDEd2}.

The new functions $\tilde{\rho}(t,x) := \frac{\rho(t,x)}{M}$ and
$\tilde{c}(t,x) := \frac{c(t,x)}{M}$
satisfy the system~\eqref{KSd2} with the new 
parameter~$\tilde{\chi} := \chi M$. Therefore, w.l.o.g. we may and do thereafter assume that $M=1$. We consider the following notion of solution to \eqref{KSd2}:
\begin{mydef}
\label{notionOfSold2}
Given the functions $\rho_0$ and $c_0$, and the constants $\chi >0$, 
$\lambda\geq0$, $T>0$, the pair $(\rho,c)$ is said 
to be a 
solution to~\eqref{KSd2} if~$\rho(t,\cdot)$ is a probability density 
function for 
every $0\leq t\leq T$,
one has 
\begin{equation}
\label{rho_space}
\forall 1< q<\infty ~ \exists C_q(\chi) >0:\quad \sup_{t\leq T} t^{1-\frac{1}{q}}\|\rho(t,\cdot)\|_{L^{q}(\R^2)}\leq C_q(\chi),
\end{equation}
and the following equality
\begin{equation} 
\label{eq:rho-KSd2}
\rho(t,x) =g_t\ast \rho_0(x) - \chi \sum_{i=1}^2 \int_0^t\nabla_i g_{t-s} \ast (\nabla_i c(s,\cdot)
~\rho(s,\cdot))(x)~ds
\end{equation}
is satisfied in the sense of the distribution with
\begin{equation} \label{eq:c-KSd2}
c(t,x) = e^{-\lambda t}(g(t,\cdot \ )\ast c_0)(x)+ \int_0^t 
e^{-\lambda s} (g_s \ast \rho(t-s,\cdot))(x)~ds.
\end{equation}
\end{mydef}
Notice that the function $c(t,x)$ defined by~\eqref{eq:c-KSd2}
is a mild solution to~\eqref{KSd22}.
These solutions are known as integral solutions and they have 
already been studied in PDE literature for the two-dimensional 
Keller-Segel model (see~\cite{Corrias2014} and references therein). 

A consequence of Theorem \ref{ch:nlsdeD2th1} is the following result for \eqref{KSd2}:
\begin{theorem}
\label{existenceKSd2}
Let $T>0$, $\lambda \geq 0$ and $\chi >0$. Let $\rho_0$ a probability density function and $c_0 \in H^1(\R^2)$. Under the condition \eqref{d2exCond} the system~\eqref{KSd2} admits a global solution in the sense of Definition \ref{notionOfSold2}.
\end{theorem}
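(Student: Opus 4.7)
The plan is to extract the PDE solution directly from the non-linear martingale problem solution provided by Theorem \ref{ch:nlsdeD2th1}. Let $\Q$ be a solution to \hyperref[defMP2d]{$(MP)$} on $[0,T]$ with time marginal densities $(q_t)_{0<t\leq T}$, which exists under \eqref{d2exCond}. I would set $\rho(t,\cdot):=q_t$ (with $\rho(0,\cdot)=\rho_0$) and define $c(t,x)$ by the explicit mild formula \eqref{eq:c-KSd2}. Property \eqref{rho_space} is then immediate from the marginal estimate \eqref{dens_Est_d2} in Definition \ref{defMP2d}(ii), and $\rho(t,\cdot)$ being a probability density follows from $\Q_0=\rho_0$ together with the martingale property preserving total mass.

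The crucial identification is that the drift $b(t,x;q)$ appearing in (MP)(iii) coincides with $\chi\,\nabla c(t,x)$. Differentiating \eqref{eq:c-KSd2} in $x$ and using $\nabla g_t=K_t$, one obtains
\begin{equation*}
\chi\nabla c(t,x) = \chi e^{-\lambda t}(\nabla c_0\ast g_t)(x) + \chi\int_0^t e^{-\lambda(t-s)} (K_{t-s}\ast q_s)(x)\,ds = b(t,x;q),
\end{equation*}
after the change of variables $s\mapsto t-s$ in the second integral of \eqref{eq:c-KSd2}. The exchange of derivative and integral, as well as the legality of all the convolutions, are justified by the estimates \eqref{driftSpacePreliminary} and the hypothesis $c_0\in H^1(\R^2)$ together with \eqref{rho_space}; this is exactly the content of the remark preceding Theorem \ref{ch:nlsdeD2th1}.

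With this identification, the martingale property in Definition \ref{defMP2d}(iii) yields, for every $f\in C_K^2(\R^2)$ and $t\in(0,T]$,
\begin{equation*}
\int f(x)\,q_t(x)\,dx - \int f(x)\,\rho_0(dx) = \int_0^t\int\Big[\tfrac{1}{2}\Delta f(x)+\nabla f(x)\cdot \chi\nabla c(s,x)\Big]q_s(x)\,dx\,ds,
\end{equation*}
which is precisely the distributional formulation of \eqref{KSd21} with $\rho=q$ and $c$ as above. To convert this into the mild form \eqref{eq:rho-KSd2}, I would apply Duhamel's principle: test the weak Fokker-Planck equation against $(s,y)\mapsto g_{t-s}\ast f(y)$ for $f\in C_c^\infty(\R^2)$, which is the standard bridge between the weak and mild formulations. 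The integrability needed to make this rigorous is provided by \eqref{rho_space}, \eqref{driftSpacePreliminary} and the $L^p$ estimates on $K_t$ and $g_t$ cited in the remark, combined with Young's inequality.

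The main technical obstacle I anticipate is not conceptual but bookkeeping: one has to check that the non-linear term $\rho\,\nabla c$ belongs to a space allowing one to take convolution with $\nabla g_{t-s}$ and integrate in $s$, with integrable time singularity at $s=t$. Using $\rho(s,\cdot)\in L^q(\R^2)$ with the blow-up rate $s^{-(1-1/q)}$ from \eqref{rho_space} and $\nabla c(s,\cdot)\in L^r(\R^2)$ with rate $s^{-(1/2-1/r)}$ (obtained from \eqref{driftSpacePreliminary} via the identification $b(\cdot;q)=\chi\nabla c$), Hölder and a computation of the resulting Beta integral should close the argument. The global-in-time character is automatic since Theorem \ref{ch:nlsdeD2th1} holds for arbitrary $T>0$ under the single condition \eqref{d2exCond}, which does not degrade with $T$.
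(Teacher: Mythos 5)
Your proposal is correct and follows essentially the same route as the paper: take the time marginals of the $(MP)$ solution from Theorem \ref{ch:nlsdeD2th1}, pass from the martingale property to the mild (Duhamel) form of the Fokker--Planck equation, define $c$ by the explicit formula \eqref{eq:c-KSd2}, and identify $\chi\nabla c$ with the drift $b(\cdot,\cdot;q)$ using $c_0\in H^1(\R^2)$ and the density estimates. The paper performs the same steps in a slightly different order (mild equation first, then the identification, citing Proposition 4.1 of \cite{Mi-De} for the weak-to-mild passage), but there is no substantive difference.
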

Let us compare the above result with the literature mentioned in the introduction. 
In  \cite{CalCor2008} the authors obtain the global existence in sub-critical case assuming:
\begin{enumerate}[i)]
\item $\rho_0 \in L^1(\R^2)\cap L^1(\R^2, \log(1+|x|^2)dx) $ and $\rho_0 \log \rho_0 \in L^1(\R^2)$;
\item $c_0\in H^1(\R^2)$ if $\lambda >0$ or $c_0 \in L^1(\R^2)$ and $|\nabla c_0|\in L^2(\R^2)$ if $\lambda=0$;
\item $\rho_0 ~ c_0 \in L^1(\R^2)$.
\end{enumerate}
We emphasize that their sub-critical condition translates into $4 \chi < 8\pi $ for \eqref{KSd2} due to the additional diffusion coefficients in it and the assumption $M=1$. In the same sub-critical case, the global existence result is obtained in \cite{Mizo} assuming $\rho_0 \in L^1(\R^2)\cap L^\infty(\R^2)$ and $c_0\in H^1(\R^2) \cap  L^1(\R^2) $. Our result does not assume any additional conditions other than that $\rho_0$ is a density function and  $c_0\in H^1(\R^2)$. The price to pay is the condition \eqref{d2exCond} that not just involves the parameter $\chi$, but the initial datum as well. 

It is more appropriate to compare Theorem \ref{existenceKSd2} to the result in \cite[Thm. 2.1]{Corrias2014}. Indeed, the assumptions on initial conditions are the same and as well the notion of solution. However, the setting and the objectives are different. In \cite{Corrias2014}, the parameter $\alpha$ (see \eqref{KSd}) is not fixed to be equal to $1$ and plays an important role. The goal is to prove the global existence for any positive mass $M$ and $\chi=1$. This is achieved under the following conditions: 
\begin{enumerate}[C1:]
\item There exists $\delta= \delta (M, \alpha)$ such that $\|\nabla c_0\|_{L^2(\R)}< \delta$,
\item There exists a constant $C=C(\alpha)$ such that $M<C(\alpha)$.
\end{enumerate}
The condition C2 is similar to the Condition \eqref{d2exCond} for $\chi$. We cannot totally compare them as the constants are not explicitely written in \cite{Corrias2014}. What is important is that $C(\alpha)$ grows with $\alpha$, so one can have $M$ as large as one likes  in C2 as soon as $\alpha$ is large enough as well. In the present paper our objective is to get results for the classical Keller-Segel model ($\alpha =1$) with respect to the chemo-attractant sensitivity. When we assume the same ($\alpha =1$, $M=1$ and $\chi>0$) in the framework of \cite{Corrias2014}, we see that we have removed the assumption on the smallness of the initial datum (C1). The reason lies in our method: in \cite{Corrias2014} Banach's fixed point is used to construct a solution locally in time (where C1 emerges) and, then, such solution is globalized (where C2 emerges). In our case only a condition of C2 type appears as, thanks to our regularization procedure, we directly construct a global solution. In addition, as our condition is explicitly written in Section \ref{chd2:Regularization}, one can analyse the constants in order to find the optimal condition on $\chi$.

Our next main result concerns the uniqueness of the constructed solutions.
\begin{theorem}
\label{th:uniqKS}
Let the assumptions of Theorem \ref{ch:nlsdeD2th1} hold.
Then, the Keller-Segel system \eqref{KSd2} admits a unique solution in the sense of Definition~\ref{notionOfSold2} provided that $\chi$ satisfies the following additional condition:
\begin{equation}
\label{chiCondUniq}
C_0\chi( \|\nabla c_0\|_{L^2(\R^2)}+ B_q(\chi)) < 1.
\end{equation}
Here, $C_0$ is a universal constant and $B_q(\chi)$ is given in \eqref{dens_estFIXq}. 
\end{theorem}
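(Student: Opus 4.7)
The plan is to compare two solutions in the weighted $L^q$ norm suggested by \eqref{rho_space} and derive a strict contraction. Let $(\rho^1,c^1)$ and $(\rho^2,c^2)$ be two solutions in the sense of Definition~\ref{notionOfSold2} with the same initial data $(\rho_0,c_0)$. Since $c^i$ is recovered from $\rho^i$ via \eqref{eq:c-KSd2}, it suffices to show that $\Delta\rho:=\rho^1-\rho^2\equiv 0$ on $[0,T]$. Fix $q\in(2,4)$ as in Theorem~\ref{ch:nlsdeD2th1} and set
\[
U(t):=\sup_{0<s\leq t}s^{1-1/q}\|\Delta\rho(s,\cdot)\|_{L^q(\R^2)},
\]
which is finite because both solutions satisfy \eqref{rho_space}.

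Subtracting the mild equations \eqref{eq:rho-KSd2} for $\rho^1$ and $\rho^2$ and splitting the nonlinearity as $\nabla c^1\,\rho^1-\nabla c^2\,\rho^2=\nabla c^1\,\Delta\rho+\nabla(c^1-c^2)\,\rho^2$ gives
\[
\Delta\rho(t,\cdot)=-\chi\int_0^t \nabla g_{t-s}\ast\bigl(\nabla c^1(s)\,\Delta\rho(s)\bigr)\,ds-\chi\int_0^t \nabla g_{t-s}\ast\bigl(\nabla(c^1-c^2)(s)\,\rho^2(s)\bigr)\,ds,
\]
while \eqref{eq:c-KSd2} yields $\nabla(c^1-c^2)(s)=\int_0^s e^{-\lambda(s-u)}\,\nabla g_{s-u}\ast\Delta\rho(u)\,du$, so the second term is a double time convolution of $\Delta\rho$ against two copies of $\nabla g$.

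For the first term I choose $r\in(2q/(q+2),2)$ and apply Young's inequality $\|\nabla g_{t-s}\ast f\|_{L^q}\leq\|\nabla g_{t-s}\|_{L^r}\|f\|_{L^p}$ with $1+1/q=1/r+1/p$, then H\"older on the product $\nabla c^1\,\Delta\rho$ placing $\Delta\rho$ in $L^q$ and $\nabla c^1$ in $L^{r'}$. Lemma~\ref{lemma:kerneld2Lp} delivers $\|\nabla g_{t-s}\|_{L^r}\lesssim(t-s)^{1/r-3/2}$, and \eqref{driftSpacePreliminary} applied to $\rho^1$ gives $\|\nabla c^1(s)\|_{L^{r'}}\lesssim\bigl(\|\nabla c_0\|_{L^2(\R^2)}+B_q(\chi)\bigr)\,s^{1/r'-1/2}$. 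Inserting these bounds, weighting by $t^{1-1/q}$, and pulling $U(t)$ out of the time integral reduces the remaining integrand to a Beta kernel whose exponents sum to $1/q-1$, cancelling the outer weight $t^{1-1/q}$ exactly and producing a constant-in-$t$ contribution. The double-integral term is handled analogously by a nested Young--H\"older argument, bounding $\rho^2$ in $L^q$ via \eqref{rho_space} and $\nabla(c^1-c^2)$ in $L^{q'}$; the Beta integrals that appear are of the same shape as those already controlled in Section~\ref{chd2:Regularization}. Summing the two contributions yields
\[
U(t)\leq C_0\,\chi\bigl(\|\nabla c_0\|_{L^2(\R^2)}+B_q(\chi)\bigr)\,U(t),\qquad t\in(0,T],
\]
and \eqref{chiCondUniq} forces $U\equiv 0$; hence $\rho^1\equiv\rho^2$ and then $c^1\equiv c^2$ by \eqref{eq:c-KSd2}.

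The main technical obstacle is the simultaneous scaling constraint: one must choose $r,p,r'$ so that both Beta-function exponents $\tfrac1r-\tfrac12$ and $\tfrac1{r'}+\tfrac1q-\tfrac12$ are strictly positive (and similarly for the pair arising in the double-integral term) while the total $t$-power exactly cancels the outer weight $t^{1-1/q}$. This is precisely the window $r\in(2q/(q+2),2)$ for which the class \eqref{rho_space} was tailored in Section~\ref{sec:main_results}. A secondary, purely bookkeeping, task is to track how $\|\nabla c_0\|_{L^2(\R^2)}$ (coming from $b_0$) and $B_q(\chi)$ (coming from the self-interaction part of the drift) enter additively in the contraction constant $C_0$, which dictates the precise form of \eqref{chiCondUniq}.
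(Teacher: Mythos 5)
Your overall architecture is the same as the paper's: split $\nabla c^1\rho^1-\nabla c^2\rho^2=\nabla c^1\,\Delta\rho+\nabla(c^1-c^2)\,\rho^2$ in the difference of the mild equations, control everything in a weighted sup-in-time Lebesgue norm, and close a strict contraction $U\leq C_0\chi(\|\nabla c_0\|_{L^2}+B_q(\chi))U$ without Gronwall. Your treatment of the first term is correct, and your window $r\in(2q/(q+2),2)$ is exactly what makes the Beta exponents admissible there. The one substantive deviation is that you measure $\Delta\rho$ in $L^q$ with $q\in(2,4)$, whereas the paper measures it in $L^r$ with a \emph{small} exponent $r<2$ (chosen relative to $q'$), and this deviation breaks your second term as you describe it.

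Concretely, two steps fail. First, you cannot bound $\nabla(c^1-c^2)(s)=\int_0^s e^{-\lambda(s-u)}\nabla g_{s-u}\ast\Delta\rho(u)\,du$ in $L^{q'}(\R^2)$ in terms of $U(t)=\sup_u u^{1-1/q}\|\Delta\rho(u)\|_{L^q}$: Young's inequality $\|\nabla g_{s-u}\ast\Delta\rho(u)\|_{L^{q'}}\leq\|\nabla g_{s-u}\|_{L^y}\|\Delta\rho(u)\|_{L^q}$ requires $1+\tfrac1{q'}=\tfrac1y+\tfrac1q$, i.e. $\tfrac1y=2-\tfrac2q>1$ for $q>2$, which is impossible — convolution can only raise the Lebesgue exponent of $\Delta\rho\in L^q$, never lower it to $q'<q$. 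Second, even granting some bound on $\nabla(c^1-c^2)$ in $L^{q'}$, pairing it with $\rho^2\in L^q$ puts the product in $L^1$, and Young toward an $L^q$ bound of $\nabla g_{t-s}\ast(\cdot)$ then forces $\|\nabla g_{t-s}\|_{L^q}\sim(t-s)^{-(3/2-1/q)}$, which is not integrable in time for $q>2$. The paper avoids both problems by taking the contraction norm to be $\sup_t t^{1-1/r}\|\Delta\rho(t)\|_{L^r}$ with $r<q'<2$: then $\nabla(c^1-c^2)\in L^{q'}$ \emph{is} reachable from $\Delta\rho\in L^r$ by Young (the required kernel exponent $x$ satisfies $1<x<2$), and the outer convolution uses $\|\nabla g_{t-s}\|_{L^r}\|\cdot\|_{L^1}$ with the integrable singularity $(t-s)^{-(3/2-1/r)}$. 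Your first-term computation survives this change of norm essentially verbatim (it becomes the paper's term $I$, using the $L^\infty$ bound $\sqrt{s}\|\nabla c^1(s,\cdot)\|_{L^\infty}\lesssim\|\nabla c_0\|_{L^2}+B_q(\chi)$), so the fix is to redo the bookkeeping in the low-exponent norm rather than in $L^q$; alternatively one can stay in $L^q$ but must measure $\nabla(c^1-c^2)$ in some $L^b$ with $b\geq q$ and place the product in an intermediate $L^p$ with $2q/(q+2)<p<2$, which is a different exponent pattern from the one you wrote.
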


Finally, using the so-called transfer of uniqueness we prove the uniqueness of the solution to \hyperref[defMP2d]{$(MP)$}. Namely, we will use the results in Trevisan \cite{Trevisan} to prove the following theorem:
\begin{theorem}
\label{UniquenessLawD2}
Let the assumptions of Theorem \ref{th:uniqKS} hold. The martingale problem \hyperref[defMP2d]{$(MP)$} admits a unique solution.
\end{theorem}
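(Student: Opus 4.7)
The strategy is a two-step reduction. First, I would use Theorem~\ref{th:uniqKS} to identify the one-dimensional time marginals of any solution of \hyperref[defMP2d]{$(MP)$}; once these are fixed, the nonlinear drift $b(\cdot,\cdot;q)$ becomes a deterministic function and \hyperref[defMP2d]{$(MP)$} collapses to a \emph{linear} martingale problem on $\Cc([0,T];\R^2)$. Then I would invoke Trevisan's \cite{Trevisan} transfer principle, which propagates uniqueness from a linear Fokker--Planck equation to the associated martingale problem.

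For the first step, let $\Q^1,\Q^2$ be two solutions of \hyperref[defMP2d]{$(MP)$} with respective time marginal densities $(q^i_t)_{t\leq T}$, $i=1,2$. Taking expectations in the martingale property for test functions $f\in C_K^2(\R^2)$ shows that $(q^i_t)$ is a distributional solution of
\begin{equation*}
\partial_t q^i_t = \tfrac{1}{2}\Delta q^i_t - \nabla\cdot\bigl(b(\cdot,\cdot;q^i)\, q^i_t\bigr),\qquad q^i_0=\rho_0.
\end{equation*}
Defining $c^i$ by \eqref{eq:c-KSd2} with $\rho$ replaced by $q^i$, the pair $(q^i,c^i)$ then satisfies \eqref{eq:rho-KSd2}--\eqref{eq:c-KSd2}, and since \eqref{dens_Est_d2} coincides with \eqref{rho_space}, it is a solution of \eqref{KSd2} in the sense of Definition~\ref{notionOfSold2}. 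Theorem~\ref{th:uniqKS} forces $q^1_t=q^2_t=:q_t$ for all $t\in[0,T]$, and consequently $b(\cdot,\cdot;q^1)=b(\cdot,\cdot;q^2)$.

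For the second step, both $\Q^i$ now solve the same \emph{linear} martingale problem on $\Cc([0,T];\R^2)$ with fixed drift $b(\cdot,\cdot;q)$ and initial law $\rho_0$. Combining \eqref{driftSpacePreliminary} with \eqref{dens_Est_d2} yields $\int_0^T\!\!\int_{\R^2}|b(t,x;q)|\,q_t(x)\,dx\,dt<\infty$ and an analogous $L^2$ bound, matching the integrability hypotheses in \cite{Trevisan}. Uniqueness for the linear Fokker--Planck equation with drift $b(\cdot,\cdot;q)$ within the class of narrowly continuous curves of densities satisfying \eqref{dens_Est_d2} follows from essentially the same contraction that yields Theorem~\ref{th:uniqKS}: with $c$ frozen to $c[q]$, equation \eqref{eq:rho-KSd2} becomes linear in $\rho$ and the mild-equation estimate behind \eqref{chiCondUniq} gives uniqueness between any two candidate solutions sharing the initial datum $\rho_0$. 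Trevisan's transfer principle then produces $\Q^1=\Q^2$.

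The main technical obstacle is checking that the hypotheses of \cite{Trevisan} are met despite $b(\cdot,\cdot;q)$ falling outside the Krylov--R\"ockner regime: one has to balance the singular time decay $\|b(t,\cdot;q)\|_{L^\infty}\lesssim t^{-1/2}$ near $t=0$ against the Gaussian-type $L^r$ control of $q_t$ in \eqref{dens_Est_d2} to produce the required integrabilities, and to select a Fokker--Planck uniqueness class that is simultaneously wide enough to contain the marginals of both $\Q^i$ and narrow enough to fit into Trevisan's framework. Once this bookkeeping is carried out, the identification step via Theorem~\ref{th:uniqKS} and the invocation of \cite{Trevisan} are direct.
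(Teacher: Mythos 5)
Your overall strategy --- identify the time marginals of any solution to \hyperref[defMP2d]{$(MP)$} via Theorem \ref{th:uniqKS}, freeze the drift to obtain a linear martingale problem, and then transfer uniqueness from the linear Fokker--Planck equation to that martingale problem via \cite{Trevisan} --- is exactly the route the paper takes, and your first step (showing the marginal flows of two solutions both yield solutions of \eqref{KSd2} in the sense of Definition \ref{notionOfSold2}, hence coincide) is fine as stated.

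The gap is in what you dismiss as ``bookkeeping'' in the second step. Lemma 2.12 of \cite{Trevisan} is not of the form ``uniqueness of the Fokker--Planck equation from $\rho_0$ at time $0$ implies uniqueness of the martingale problem from $\rho_0$ at time $0$.'' It is an equivalence between (i) uniqueness, for \emph{every} $s\in[0,T]$ and every admissible datum at time $s$, of the Fokker--Planck flow restarted at $s$ within a prescribed family of classes $\mathcal{R}_{[s,T]}$, and (ii) uniqueness of the linear martingale problem started from every $s$. The reason is that Trevisan's proof is an induction on finite-dimensional marginals: to pass from $k$ to $k+1$ time points one conditions at an intermediate time $s$ and restarts, so one must verify (a) that the restarted mild equation $p_{s,t}=g_{t-s}\ast\nu_s-\sum_i\int_s^t\nabla_i g_{t-u}\ast(b(u,\cdot)p_{s,u})\,du$ has a unique solution in $\mathcal{R}_{[s,T]}$ for every $s$, not just $s=0$, and (b) that the family $(\mathcal{R}_{[s,T]})_s$ is stable under domination by a constant multiple (conditioned marginals are only dominated by, not equal to, the original ones) and under restriction to subintervals. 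The paper devotes Lemma \ref{Trevisan_properties} and the derivation of \eqref{restartLinMildD2}--\eqref{flow} to precisely these points; the domination property in particular needs a Riesz-representation argument to see that a dominated measure solving \eqref{flow} still has a density with the decay $(t-s)^{-(1-1/r)}$ in $L^r$. None of this is deep --- the contraction constants in the mild-equation estimate are uniform in the restart time, so your ``same contraction as Theorem \ref{th:uniqKS}'' remark does extend to all $s$ --- but it is the actual content of the step you deferred, and your description of the obstacle (choosing one uniqueness class at time $0$, plus integrability of $b$ against $q_t$) does not capture it: the bound $\int_0^T\!\int|b(t,x;q)|q_t(x)\,dx\,dt<\infty$ is immediate from $\|b(t,\cdot;q)\|_{L^\infty}\lesssim t^{-1/2}$ and is not where the difficulty lies.
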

To summarize, the organization of our proofs is as follows: Firstly, we show the existence of a solution to the problem \hyperref[defMP2d]{$(MP)$}; Secondly, we prove existence of a solution to \eqref{KSd2}; Thirdly, the uniqueness is proved for \eqref{KSd2}; Finally, we get the uniqueness of solution to \hyperref[defMP2d]{$(MP)$}.
\section{Regularization procedure and density estimates}
 \label{chd2:Regularization}
 In this section we present our regularization procedure. Then, we get estimates on the marginals of the regularized process that are uniform in the regularization parameter. That is where we derive the explicit condition \eqref{d2exCond} on the size of $\chi$. We start with a preliminary section.
 \subsection{Preliminaries}
 The following lemmas will be used throughout the paper. 
\begin{lemma}
\label{lemma:kerneld2Lp}
Let $t>0$ and $i\in\{1,2\}$. Then, for any $1\leq q <\infty$ one has
\begin{equation}
\label{kerneld2Lp}
\|K_{t}^i\|_{L^q(\R^2)}=\|\nabla_i g_t\|_{L^q(\R^2)}= \frac{C_1(q)}{t^{\frac{3}{2}-\frac{1}{q}}},
\end{equation}
where 
$$C_1(q)= \frac{2^{\frac{1}{q}-\frac{1}{2}}}{\pi^{1-\frac{1}{2q}} q^{\frac{1}{q}+\frac{1}{2}}}\left(\Gamma(\frac{q+1}{2})\right)^{\frac{1}{q}}.$$
Here $\Gamma(x)$ denotes the Gamma function: $\Gamma(x)=\int_0^\infty z^{x-1}e^{-z} \ dz$.
\end{lemma}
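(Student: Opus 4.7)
The plan is to compute the $L^q$-norm of $\nabla_i g_t$ by a direct Gaussian moment calculation, exploiting the tensor-product structure of the two-dimensional heat kernel. First I would write out
$$\nabla_i g_t(x) = -\frac{x_i}{2\pi t^2}\, e^{-|x|^2/(2t)},$$
so that, pulling the prefactor through the $q$-th power,
$$\|\nabla_i g_t\|_{L^q(\R^2)}^q = \frac{1}{(2\pi t^2)^q}\int_{\R^2} |x_i|^q\, e^{-q|x|^2/(2t)}\, dx.$$
By rotational symmetry it suffices to treat $i=1$, and by Fubini the remaining integral factorises as a product of two one-dimensional integrals.

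Next, I would evaluate the two factors separately. The factor in the $x_2$ variable is a centered Gaussian with variance $t/q$ and equals $\sqrt{2\pi t/q}$. For the factor $\int_{\R}|y|^q e^{-qy^2/(2t)}\, dy$, the substitution $u=qy^2/(2t)$ converts it into a Gamma integral, yielding $(2t/q)^{(q+1)/2}\,\Gamma\!\left(\tfrac{q+1}{2}\right)$. Multiplying the two factors collapses to
$$\int_{\R^2} |x_1|^q\, e^{-q|x|^2/(2t)}\, dx = \sqrt{\pi}\,(2t/q)^{(q+2)/2}\,\Gamma\!\left(\tfrac{q+1}{2}\right).$$

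Substituting back, I would obtain
$$\|\nabla_i g_t\|_{L^q}^q = \frac{2^{(q+2)/2}\sqrt{\pi}\,\Gamma((q+1)/2)}{(2\pi)^q\, q^{(q+2)/2}}\; t^{(q+2)/2 - 2q},$$
which already exhibits the claimed scaling $t^{-(3q-2)/2}$; taking $q$-th roots yields the exponent $-(3/2-1/q)$ in the statement. The remaining step is the routine algebraic simplification of the prefactor: dividing the power of $2$ in the numerator by $2^q$ from $(2\pi)^q$ in the denominator gives $2^{1/q-1/2}$ (after the $q$-th root); the $\pi$'s collapse to $\pi^{1/(2q)-1} = \pi^{-(1-1/(2q))}$; and $q^{(q+2)/(2q)} = q^{1/q + 1/2}$.

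There is essentially no conceptual obstacle here; the only point requiring care is the bookkeeping of powers of $2$, $\pi$, and $q$ in the final simplification so that the constant lands exactly in the compact form
$$C_1(q)=\frac{2^{1/q-1/2}}{\pi^{1-1/(2q)}\, q^{1/q+1/2}}\,\bigl(\Gamma((q+1)/2)\bigr)^{1/q}$$
stated in the lemma. The equality $\|K_t^i\|_{L^q}=\|\nabla_i g_t\|_{L^q}$ is immediate from the definition of $K_t$ in \eqref{def:b0andK}.
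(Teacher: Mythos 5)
Your proposal is correct and follows essentially the same route as the paper: factorise the two-dimensional integral into a product of a one-dimensional Gaussian normalisation and a one-dimensional absolute $q$-th moment, evaluate the latter via the substitution $u=qy^2/(2t)$ as a Gamma integral, and simplify the constants. The intermediate expressions and the final constant $C_1(q)$ all check out against the paper's computation.
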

Notice that for $q\geq 2$, the $L^1((0,T); L^q(\R^2))$--norm of $K^i$ explodes. On the other side, in $d=1$ the kernel belongs to $L^1((0,T); L^q(\R))$ for any $1\leq q <\infty$. It is in this sense that the two--dimensional kernel is more singular than the one-dimensional one.
\begin{proof}
Let $1\leq q <\infty$. One has
\begin{align*}
&\|K_{t}^i\|_{L^q(\R^2)}=\|\nabla_i g_t\|_{L^q(\R^2)}= \frac{1}{2 \pi t^2}\left( \int_{\R^2} |x_i|^q e^{-\frac{q|x|^2}{2t}}dx\right)^{\frac{1}{q}}\\
&=\frac{1}{2 \pi t^2}\left(\int_{\R} e^{-q\frac{x^2}{2t}}dx \int_{\R} |x|^q e^{-q\frac{x^2}{2t}}dx \right)^{\frac{1}{q}}=\frac{1}{2 \pi t^2} \left(  \frac{\sqrt{2\pi t}}{\sqrt{q}} \ 2 \int_0^\infty x^q e^{-q\frac{x^2}{2t}} \ dx\right)^{\frac{1}{q}}.
\end{align*}
Apply the change of variables $\frac{qx^2}{2t}= y$. It comes
\begin{align*}
& \|K_{t}^i\|_{L^q(\R^2)}=\|\nabla_i g_t\|_{L^q(\R^2)}= \frac{1}{2 \pi t^2} \left(  \frac{\sqrt{2\pi t}}{\sqrt{q}} \ 2  \left(\frac{2t}{q}\right)^{\frac{q-1}{2}}\int_0^\infty y^{\frac{q-1}{2}} e^{-y}  \frac{t}{q}\ dy\right)^{\frac{1}{q}}\\
&= \frac{1}{2 \pi t^2} \left( \frac{2t}{q}\right)^{\frac{1}{q}+\frac{1}{2}} \pi^{\frac{1}{2q}} \left(\Gamma(\frac{q+1}{2})\right)^{\frac{1}{q}}.
\end{align*}
This ends the proof.
\end{proof}

The change of variables $\frac{x}{\sqrt{t}}=z$ leads to the following:
\begin{lemma}
\label{lemma:gauss2dLq}
Let $t>0$. Then, for any $1\leq q <\infty$ one has
\begin{equation}
\label{gauss2dLq}
\|g_t\|_{L^q(\R^2)}=\frac{1}{(2\pi)^{1-\frac{1}{q}} q^{\frac{1}{q}}t^{1-\frac{1}{q}}}=:\frac{C_2(q)}{t^{1-\frac{1}{q}}}.
\end{equation}
\end{lemma}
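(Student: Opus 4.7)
The plan is to compute the integral $\int_{\R^2} g_t(x)^q\,dx$ directly and then take the $q$-th root, following the hint given just before the lemma statement.

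First I would write out $g_t(x)^q = (2\pi t)^{-q} \exp(-q|x|^2/(2t))$ so that
\begin{equation*}
\|g_t\|_{L^q(\R^2)}^q = \frac{1}{(2\pi t)^q}\int_{\R^2} e^{-q|x|^2/(2t)}\,dx.
\end{equation*}
Then I would apply the change of variables $z = x/\sqrt{t}$ (with Jacobian $dx = t\,dz$) to strip the $t$-dependence out of the exponent, yielding
\begin{equation*}
\|g_t\|_{L^q(\R^2)}^q = \frac{t}{(2\pi t)^q}\int_{\R^2} e^{-q|z|^2/2}\,dz = \frac{t}{(2\pi t)^q}\cdot\frac{2\pi}{q},
\end{equation*}
where the last equality uses the standard $2$-dimensional Gaussian integral $\int_{\R^2} e^{-q|z|^2/2}\,dz = 2\pi/q$ (obtained from the identity $\int_{\R} e^{-a y^2/2}\,dy = \sqrt{2\pi/a}$ squared, with $a=q$).

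Finally I would simplify, obtaining $\|g_t\|_{L^q(\R^2)}^q = (2\pi)^{1-q} q^{-1} t^{1-q}$, and take the $q$-th root to recover exactly
\begin{equation*}
\|g_t\|_{L^q(\R^2)} = \frac{1}{(2\pi)^{1-1/q} q^{1/q}\, t^{1-1/q}},
\end{equation*}
which matches the claimed formula with $C_2(q) = (2\pi)^{-(1-1/q)} q^{-1/q}$. There is no real obstacle here; the computation is entirely routine and the only thing to be careful about is keeping the exponents of $2\pi$, $q$, and $t$ straight after the change of variables and the final $q$-th root.
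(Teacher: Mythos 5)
Your computation is correct and is exactly the argument the paper intends: the paper gives no written proof beyond noting that the change of variables $x/\sqrt{t}=z$ yields the result, and your substitution, the Gaussian integral $\int_{\R^2}e^{-q|z|^2/2}\,dz=2\pi/q$, and the final $q$-th root reproduce the stated constant $C_2(q)=(2\pi)^{-(1-1/q)}q^{-1/q}$ precisely.
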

The functions $C_1(q)$ and $C_2(q)$ will be used only when we need the explicit constants in a computation. In all other cases we will use the notation $C_q$ that may change from line to line. 

Now, for $0<a,b<1$, we denote
\begin{equation}
\label{def:beta}
\beta(a,b):= \int_0^1 \frac{1}{u^a \ (1-u)^b}du.
\end{equation}
The change of variables $\frac{s}{t}=u$ implies the following result:
\begin{lemma}
\label{lemma:stdComputation}
Let $t>0$ and $0<a,b<1$. Then,
$$\int_0^t \frac{1}{s^a(t-s)^b} \ ds = t^{1-(a+b)} \beta(a,b).$$
\end{lemma}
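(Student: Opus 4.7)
The plan is to execute the change of variables $s = tu$ (equivalently $u = s/t$) indicated in the paper's hint sentence and read off the identity directly. First I would set $u = s/t$, so that $ds = t\,du$ and $t-s = t(1-u)$; the limits transform as $s=0 \mapsto u=0$ and $s=t \mapsto u=1$, which is valid because $t>0$.

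Substituting, the integrand factors cleanly as
\[
\frac{1}{s^a(t-s)^b}\,ds \;=\; \frac{1}{(tu)^a\,(t(1-u))^b}\,t\,du \;=\; t^{1-a-b}\,\frac{1}{u^a(1-u)^b}\,du.
\]
Since $t^{1-a-b}$ does not depend on $u$, pulling it out of the integral and using the definition \eqref{def:beta} of $\beta(a,b)$ yields
\[
\int_0^t \frac{ds}{s^a(t-s)^b} \;=\; t^{1-(a+b)}\int_0^1 \frac{du}{u^a(1-u)^b} \;=\; t^{1-(a+b)}\,\beta(a,b).
\]

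The only subtlety is that both integrals are improper at the endpoints, so strictly speaking I would first restrict to $s\in[\varepsilon, t-\varepsilon]$ (respectively $u\in[\varepsilon/t, 1-\varepsilon/t]$), apply the change of variables on this compact subinterval where it is a smooth diffeomorphism, and then let $\varepsilon\downarrow 0$. The hypothesis $0<a,b<1$ ensures that $\int_0^1 u^{-a}(1-u)^{-b}\,du$ converges, so by monotone convergence the limit exists and equals $\beta(a,b)$, giving the stated identity. There is no real obstacle; the lemma is an essentially one-line computation recorded for later repeated use in the paper.
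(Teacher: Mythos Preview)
Your proof is correct and follows exactly the paper's indicated approach: the paper states only that ``the change of variables $\frac{s}{t}=u$ implies the following result'' without writing out the computation, and you have carried out precisely that substitution. Your added remark on justifying the improper integral via the hypothesis $0<a,b<1$ is fine extra care but not required by the paper.
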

Next, we state here the two standard convolution inequalities in their general form. The following is proven in Brezis \cite[Thm. 4.15]{Brezis}:
\begin{lemma}[The convolution inequality]
Let $f\in L^p(\R^d)$ and $g\in L^1(\R^d)$ with $1\leq p\leq \infty$. Then, $f \ast g  \in L^p(\R^d)$ and 
\begin{equation}
\label{convIneq}
\|f \ast g \|_{L^p(\R^d)} \leq \|f\|_{L^p(\R^d)} \|g\|_{L^1(\R^d)}.
\end{equation}
\end{lemma}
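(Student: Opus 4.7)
The plan is to proceed by interpolating between the two trivial endpoint cases $p=1$ and $p=\infty$ via Hölder's inequality, together with Fubini--Tonelli. Before anything else, I would check the joint measurability of $(x,y)\mapsto f(x-y)g(y)$ (which follows from measurability of $f$ and the continuity of the translation map) so that Fubini is applicable at each step.

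For $p=\infty$, the estimate is immediate: for a.e.\ $x$,
$$|(f\ast g)(x)|\leq \int_{\R^d}|f(x-y)|\,|g(y)|\,dy\leq \|f\|_{L^\infty}\|g\|_{L^1}.$$
For $p=1$, I would apply Tonelli to $\int\int |f(x-y)g(y)|\,dy\,dx$ and use the translation invariance of Lebesgue measure to obtain $\|f\ast g\|_{L^1}\leq \|f\|_{L^1}\|g\|_{L^1}$.

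The main case $1<p<\infty$ is the substantive one. The key trick is to split the measure $|g(y)|\,dy$ as $|g(y)|^{1/p}\cdot|g(y)|^{1/p'}$ inside the convolution integral (with $\tfrac{1}{p}+\tfrac{1}{p'}=1$) and apply Hölder's inequality in $y$, which gives
$$|(f\ast g)(x)|\leq \Bigl(\int |f(x-y)|^p |g(y)|\,dy\Bigr)^{1/p}\|g\|_{L^1}^{1/p'}.$$
Raising to the $p$-th power, integrating in $x$, and applying Fubini together with translation invariance yields $\|f\ast g\|_{L^p}^p\leq \|g\|_{L^1}^{p/p'+1}\|f\|_{L^p}^p=\|g\|_{L^1}^p\|f\|_{L^p}^p$, which is the desired inequality. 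As a byproduct, for almost every $x$ the inner integral is finite, so $(f\ast g)(x)$ is well-defined a.e., and the resulting function lies in $L^p(\R^d)$.

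The main subtlety, and the only real obstacle, is the a.e.\ well-definedness of the convolution: a priori $\int|f(x-y)g(y)|\,dy$ could be infinite on a non-negligible set, so one must justify that the above Hölder step produces a finite quantity for a.e.\ $x$. This is handled by first applying Tonelli to the majorant $\int\int |f(x-y)|^p|g(y)|\,dy\,dx=\|f\|_{L^p}^p\|g\|_{L^1}$, which guarantees finiteness of the inner integral for a.e.\ $x$ and legitimizes the subsequent Fubini interchange. Once this measurability and integrability bookkeeping is in place, the chain of estimates above is routine.
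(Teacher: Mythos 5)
Your proof is correct and is essentially the standard argument (Tonelli for the endpoint/finiteness issues plus the Hölder splitting $|g|=|g|^{1/p}|g|^{1/p'}$ for $1<p<\infty$); the paper does not prove this lemma itself but cites Brezis, Thm.~4.15, whose proof follows the same route. No issues.
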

The following is an extension of the preceding inequality (see \cite[Thm. 4.33]{Brezis}):
\begin{lemma}[The convolution inequality]
Let $f\in L^p(\R^d)$ and $g\in L^q(\R^d)$ with $1\leq p,q \leq \infty$ and $\frac{1}{r}=\frac{1}{q}+\frac{1}{p}-1\geq 0$. Then, $f \ast g  \in L^r(\R^d)$ and 
\begin{equation}
\label{convIneq2}
\|f \ast g \|_{L^r(\R^d)} \leq \|f\|_{L^p(\R^d)} \|g\|_{L^q(\R^d)}.
\end{equation}
\end{lemma}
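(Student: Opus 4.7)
The plan is to reduce to the boundary cases handled by the previous lemma and by Hölder's inequality, and then to treat the bulk range $1<p,q,r<\infty$ by the classical three-exponent Hölder decomposition. First I would dispose of the extremes. When $r=\infty$, the hypothesis $1/p+1/q=1$ forces $q=p'$, and the pointwise bound
$$|(f\ast g)(x)| \leq \int_{\R^d}|f(y)|\,|g(x-y)|\,dy \leq \|f\|_{L^p(\R^d)}\|g\|_{L^{p'}(\R^d)}$$
follows directly from Hölder (using translation invariance of Lebesgue measure to evaluate $\|g(x-\cdot)\|_{L^{p'}(\R^d)}=\|g\|_{L^{p'}(\R^d)}$). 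When $p=1$ (then $r=q$) or $q=1$ (then $r=p$), the statement reduces to the previous lemma \eqref{convIneq}.

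For the remaining range, the key step is to factor the integrand so that one piece carries the ``mixed'' mass and the other two carry the individual $L^p$ and $L^q$ norms:
$$|f(y)\,g(x-y)| = \bigl(|f(y)|^{p/r}|g(x-y)|^{q/r}\bigr)\cdot |f(y)|^{1-p/r}\cdot |g(x-y)|^{1-q/r}.$$
I would apply the three-factor Hölder inequality in $y$ with exponents $r$, $r_1:=pr/(r-p)$ and $r_2:=qr/(r-q)$. A direct computation gives $1/r+1/r_1+1/r_2 = 1/p+1/q-1/r = 1$, so the exponents are admissible. Since $\int |f(y)|^{(1-p/r)r_1}dy=\|f\|_{L^p(\R^d)}^p$ and similarly for $g$, this yields, pointwise in $x$,
$$\int_{\R^d}\!|f(y)g(x-y)|\,dy \leq \Bigl(\int_{\R^d}\!|f(y)|^p|g(x-y)|^q\,dy\Bigr)^{1/r}\!\|f\|_{L^p(\R^d)}^{1-p/r}\|g\|_{L^q(\R^d)}^{1-q/r}.$$

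To conclude, I would raise this bound to the $r$-th power, integrate in $x$, and invoke Fubini together with the translation invariance of Lebesgue measure to evaluate the remaining double integral as $\|f\|_{L^p(\R^d)}^p\|g\|_{L^q(\R^d)}^q$. Taking the $r$-th root then delivers \eqref{convIneq2}. There is no deep obstacle here; the only point that requires care is the case where one of the denominators $r-p$ or $r-q$ vanishes, but this corresponds exactly to the boundary configurations $p=1$ or $q=1$ already treated separately, so the exponents $r_1,r_2$ are well defined throughout the range handled by the main argument.
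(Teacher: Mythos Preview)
Your argument is correct and is exactly the classical proof of Young's convolution inequality via the three-term H\"older decomposition; the boundary cases are handled properly and the exponent bookkeeping checks out. The paper itself does not supply a proof of this lemma at all: it simply records the statement and refers the reader to \cite[Thm.~4.33]{Brezis}. So there is no alternative approach to compare against; you have filled in what the paper left as a citation, and your write-up matches the standard textbook proof that the cited reference contains.
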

Now, we are ready to prove the following lemma about the behaviour of the linear part of the drift:
\begin{lemma}
\label{lemma:linDrift}
Let $t>0$. Then, the function $b_0^i(t,\cdot)$ is continuous on $\R^2$ and for $r\in [2, \infty]$, one has
$$\|b_0^i(t,\cdot)\|_{L^r(\R^2)}\leq \chi  \|\nabla c_0\|_{L^2(\R^2)(\R^2)} \frac{C_2(\frac{2r}{r+2})}{t^{\frac{1}{2}-\frac{1}{r}}}.$$
\end{lemma}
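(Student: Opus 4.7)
The plan is to prove both claims by direct application of the general convolution inequality \eqref{convIneq2} to the definition $b_0^i(t,x)=\chi e^{-\lambda t}(\partial_i c_0\ast g_t)(x)$, since the assumption $c_0\in H^1(\R^2)$ gives $\partial_i c_0\in L^2(\R^2)$.

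\medskip
\noindent\textbf{Step 1: the $L^r$ bound.} For a fixed $r\in[2,\infty]$, I would look for $p\geq 1$ so that Young's inequality \eqref{convIneq2} applies with the pair $(p,2)\to r$, i.e., $\frac{1}{r}=\frac{1}{p}+\frac{1}{2}-1$. This yields $\frac{1}{p}=\frac{1}{2}+\frac{1}{r}$, equivalently $p=\frac{2r}{r+2}$, and one checks that $p\in[1,2]$ for $r\in[2,\infty]$, so the hypothesis $\frac{1}{p}+\frac{1}{2}\geq 1$ is met. Applying \eqref{convIneq2} and then invoking \eqref{gauss2dLq} from Lemma~\ref{lemma:gauss2dLq} with exponent $p$, together with $e^{-\lambda t}\leq 1$ (as $\lambda\geq 0$), gives
\begin{equation*}
\|b_0^i(t,\cdot)\|_{L^r(\R^2)}\leq \chi\,\|\partial_i c_0\|_{L^2(\R^2)}\,\|g_t\|_{L^p(\R^2)}\leq \chi\,\|\nabla c_0\|_{L^2(\R^2)}\,\frac{C_2\!\left(\frac{2r}{r+2}\right)}{t^{\,1-\frac{1}{p}}}.
\end{equation*}
The exponent on $t$ simplifies via $1-\frac{1}{p}=1-\frac{1}{2}-\frac{1}{r}=\frac{1}{2}-\frac{1}{r}$, which matches the stated bound.

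\medskip
\noindent\textbf{Step 2: continuity.} For continuity of $b_0^i(t,\cdot)$ on $\R^2$ at fixed $t>0$, I would rely on the classical fact that if $f\in L^2(\R^d)$ and $h\in L^2(\R^d)$ then $f\ast h$ is uniformly continuous on $\R^d$ (this follows from density of $C_c$ in $L^2$ and continuity of translation in $L^2$, combined with the $r=\infty$ instance of \eqref{convIneq2}). Since $\partial_i c_0\in L^2(\R^2)$ and $g_t\in L^2(\R^2)$ for every $t>0$, the convolution $\partial_i c_0\ast g_t$ is (uniformly) continuous on $\R^2$, and hence so is $b_0^i(t,\cdot)$.

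\medskip
\noindent\textbf{Main obstacle.} There is essentially no difficulty beyond the bookkeeping: the only nontrivial point is identifying the correct exponent $p=\frac{2r}{r+2}$ to match the target $L^r$ space via Young's inequality against the $L^2$ factor $\partial_i c_0$, and then checking that the resulting power of $t$ coincides with $\frac{1}{2}-\frac{1}{r}$. Both are mechanical and the proof is short.
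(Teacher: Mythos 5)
Your proof is correct and follows essentially the same route as the paper: Young's inequality \eqref{convIneq2} with the exponent $p=\frac{2r}{r+2}$ paired against $\nabla_i c_0\in L^2(\R^2)$, followed by the Gaussian bound \eqref{gauss2dLq}, and continuity from the fact that the convolution of two $L^2$ functions is continuous (the paper simply cites Brezis for this last point where you sketch the standard translation-continuity argument).
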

\begin{proof}
As $\nabla_i c_0$ is only in $L^2(\R^2)$ we cannot apply the classical results of convolution with a continuous function. The continuity of $b_0^i(t,\cdot)= \chi \nabla_i c_0 \ast g_t$ is a direct consequence of \cite[Ex. 4.30-3.]{Brezis} as for a $t>0$ both $g_t$ and $\nabla_i c_0$ belong to $L^2(\R^2)$. 


Let $q\geq1$ be such that $\frac{1}{q}+\frac{1}{2}=1+\frac{1}{r}$. By the convolution inequality \eqref{convIneq2}, one has
$$\|b_0^i(t,\cdot)\|_{L^r(\R^2)}\leq \chi \|\nabla_i c_0\|_{L^2(\R^2)}\|g_t\|_{L^q(\R^2)}.$$
In view of estimates on $\|g_t\|_{L^q(\R^2)}$ and the relation above between $r$ and $q$, one has
$$\|b_0^i(t,\cdot)\|_{L^r(\R^2)}\leq \chi \|\nabla c_0\|_{L^2(\R^2)} \|g_t\|_{L^{\frac{2r}{r+2}}}\leq \chi \|\nabla c_0\|_{L^2(\R^2)} \frac{C_2(\frac{2r}{r+2})}{t^{1-(\frac{1}{r}+\frac{1}{2})}}.$$ 
\end{proof}
The following lemma is a direct application of Lemma 8 in Brezis and Cazenave~\cite{BrezisCazenave} with $N=2$ and $q=1$:
\begin{lemma}
\label{lemma:Brezis}
Let $p_0$ a probability density function on $\R^2$ and $1<r<\infty$.  One has
$$\limsup_{t\to 0} t^{1-\frac{1}{r}} \|g_t\ast p_0\|_{L^r(\R^2)}=0.$$
\end{lemma}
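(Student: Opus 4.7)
The plan is to combine the standard heat-kernel smoothing estimate with a density argument. The exponent $1-\tfrac{1}{r}$ is precisely what is needed to balance Young's inequality from $L^1$ into $L^r$; the claim is that this worst-case rate is never saturated, because $p_0$ does not concentrate at a point (it lies in $L^1$, not in a delta mass).

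First I would record the two basic bounds implied by the convolution inequality \eqref{convIneq} together with Lemma~\ref{lemma:gauss2dLq}. On one hand, viewing $g_t \in L^r(\R^2)$ and $p_0 \in L^1(\R^2)$,
\begin{equation*}
t^{1-\frac{1}{r}}\,\|g_t \ast p_0\|_{L^r(\R^2)} \;\leq\; t^{1-\frac{1}{r}}\,\|g_t\|_{L^r(\R^2)}\,\|p_0\|_{L^1(\R^2)} \;=\; C_2(r)\,\|p_0\|_{L^1(\R^2)},
\end{equation*}
which is the uniform-in-$t$ bound but does not vanish. On the other hand, if a function $\phi$ happens to lie in $L^r(\R^2)$, then using $\|g_t\|_{L^1(\R^2)}=1$ gives $\|g_t \ast \phi\|_{L^r(\R^2)} \leq \|\phi\|_{L^r(\R^2)}$, so $t^{1-1/r}\|g_t \ast \phi\|_{L^r(\R^2)} \to 0$ as $t\to 0$ trivially.

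Then I would run the density argument. Fix $\varepsilon>0$ and pick $\phi\in C_c^\infty(\R^2) \subset L^1(\R^2)\cap L^r(\R^2)$ with $\|p_0-\phi\|_{L^1(\R^2)} < \varepsilon$; such a $\phi$ exists by the density of $C_c^\infty$ in $L^1$. Splitting $p_0 = (p_0-\phi) + \phi$ and applying the two estimates above separately,
\begin{equation*}
t^{1-\frac{1}{r}}\,\|g_t \ast p_0\|_{L^r(\R^2)} \;\leq\; C_2(r)\,\|p_0-\phi\|_{L^1(\R^2)} + t^{1-\frac{1}{r}}\,\|\phi\|_{L^r(\R^2)} \;\leq\; C_2(r)\,\varepsilon + t^{1-\frac{1}{r}}\,\|\phi\|_{L^r(\R^2)}.
\end{equation*}
Taking $\limsup_{t\to 0}$ kills the second term on the right, hence $\limsup_{t\to 0} t^{1-1/r}\|g_t \ast p_0\|_{L^r(\R^2)} \leq C_2(r)\,\varepsilon$, and letting $\varepsilon \downarrow 0$ finishes the proof.

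There is no real obstacle here: the argument is the standard blueprint behind Lemma~8 of Brezis--Cazenave that the problem statement invokes. The only point requiring a moment of care is that $p_0$ itself need not belong to $L^r(\R^2)$, which is precisely why the approximation by $\phi \in L^1 \cap L^r$ is indispensable; the estimate \eqref{convIneq} then does all the work.
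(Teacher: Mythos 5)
Your proof is correct. The paper gives no argument of its own for this lemma --- it simply invokes Lemma~8 of Brezis--Cazenave with $N=2$, $q=1$ --- and your density argument (uniform bound $t^{1-1/r}\|g_t\ast\cdot\|_{L^r}\leq C_2(r)\|\cdot\|_{L^1}$ from \eqref{convIneq} and Lemma~\ref{lemma:gauss2dLq}, combined with the trivial decay for approximants in $L^1\cap L^r$) is precisely the standard proof behind that citation, so nothing further is needed.
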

\subsection{Regularization}
 We define the regularized version of the interaction kernel $K$ and the linear part of the drift as follows. For $\varepsilon >0$ and $(t,x)\in (0,T)\times \R^2$ define
$$K_t^\varepsilon:= \frac{t^2}{(t+\varepsilon)^2}K_t(x), \ \ g_t^\varepsilon(x):= \frac{t}{ (t+\varepsilon)}g_t(x)   \ \ \text{ and }   \ \ b_0^\varepsilon(t,x):= \chi e^{-\lambda t}(\nabla c_0\ast g_t^\varepsilon )(x).$$
 For a $t\leq T$, the regularized Mc-Kean-Vlasov equation reads
 \begin{equation}
\label{NLSDEd2REG}
\begin{cases}
& dX_t^\varepsilon=dW_t + b_0^\varepsilon(t,X_t^\varepsilon)dt +\chi\{\int_0^t e^{-\lambda (t -s)}(K_{t-s}^\varepsilon\ast\mu_s^\varepsilon)(X_t^\varepsilon)\ ds\} dt,  \\
& \mu_s^\varepsilon:=  \mathcal{L}( X_{s}^\varepsilon),\quad X_0^\varepsilon \sim \rho_0,
\end{cases}
\end{equation}
Set
$$b^\varepsilon(t,x;\mu^\varepsilon):= b_0^\varepsilon(t,x) +\chi\int_0^t e^{-\lambda (t -s)}\int K_{t-s}^\varepsilon(x-y)\mu_s^\varepsilon(dy)\ ds.$$

Similar computations as the ones to get \eqref{kerneld2Lp} and \eqref{gauss2dLq} lead to the following estimates. For $t\in (0,T]$ and $1\leq q <\infty$, one has
\begin{equation}
\label{reg2dKernelEst}
\|K_{t}^{\varepsilon, i}\|_{L^q(\R^2)}\leq \frac{C_1(q)}{(t+\varepsilon)^{\frac{3}{2}-\frac{1}{q}}} \quad \text{ and } \quad \|g_t^\varepsilon\|_{L^q(\R^2)}\leq \frac{C_2(q)}{(t+\varepsilon)^{1-\frac{1}{q}}}.
\end{equation}
Repeating the arguments as in the proof of Lemma \ref{lemma:linDrift}, one gets
\begin{lemma}
\label{lemma:linDriftREG}
For $t>0$ and $r\in [2, \infty]$ one has
$$\|b_0^{\varepsilon, i}(t,\cdot)\|_{L^r(\R^2)}\leq \chi \|\nabla c_0\|_{L^2(\R^2)}  \frac{C_2(\frac{2r}{r+2})}{(t+\varepsilon)^{\frac{1}{2}-\frac{1}{r}}}.$$
\end{lemma}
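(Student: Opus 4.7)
The plan is to imitate the proof of Lemma~\ref{lemma:linDrift} almost verbatim, replacing the standard heat kernel $g_t$ by its regularized counterpart $g_t^\varepsilon = \frac{t}{t+\varepsilon}g_t$ and invoking the regularized bound \eqref{reg2dKernelEst} at the last step instead of \eqref{gauss2dLq}.

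Concretely, I would start from the definition
$$b_0^{\varepsilon,i}(t,x) = \chi e^{-\lambda t} (\nabla_i c_0 \ast g_t^\varepsilon)(x),$$
note that $e^{-\lambda t}\leq 1$ since $\lambda\geq 0$, and choose $q\geq 1$ by the Young scaling relation $\frac{1}{q}+\frac{1}{2}=1+\frac{1}{r}$, i.e.\ $q=\frac{2r}{r+2}$, which indeed lies in $[1,2]$ when $r\in[2,\infty]$. Applying the convolution inequality \eqref{convIneq2} to the pair $(\nabla_i c_0, g_t^\varepsilon)\in L^2(\R^2)\times L^q(\R^2)$ gives
$$\|b_0^{\varepsilon,i}(t,\cdot)\|_{L^r(\R^2)} \leq \chi\, \|\nabla_i c_0\|_{L^2(\R^2)}\, \|g_t^\varepsilon\|_{L^q(\R^2)}.$$
Then I would plug in the second bound of \eqref{reg2dKernelEst} with the chosen $q=\frac{2r}{r+2}$, so that $1-\frac{1}{q}=\frac{1}{2}-\frac{1}{r}$, yielding exactly the claimed estimate
$$\|b_0^{\varepsilon,i}(t,\cdot)\|_{L^r(\R^2)} \leq \chi\, \|\nabla c_0\|_{L^2(\R^2)}\, \frac{C_2\!\left(\frac{2r}{r+2}\right)}{(t+\varepsilon)^{\frac{1}{2}-\frac{1}{r}}}.$$
Finally, I would use $\|\nabla_i c_0\|_{L^2(\R^2)}\leq \|\nabla c_0\|_{L^2(\R^2)}$ to match the stated constant on the right-hand side.

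There is essentially no obstacle here: the regularization $g_t\mapsto g_t^\varepsilon$ just replaces $t$ by $t+\varepsilon$ in the relevant Gaussian norm, and the Young inequality step is identical. The only thing to double-check is that $q=\frac{2r}{r+2}\in[1,2]$ for the whole range $r\in[2,\infty]$ (so that \eqref{convIneq2} applies), and the endpoint $r=\infty$ (which forces $q=2$) works because $g_t^\varepsilon\in L^2(\R^2)$ for every $t>0$ and $\varepsilon>0$. Continuity of $b_0^{\varepsilon,i}(t,\cdot)$, if desired, follows as in Lemma~\ref{lemma:linDrift} from the Brezis reference since $g_t^\varepsilon\in L^2(\R^2)$ as well.
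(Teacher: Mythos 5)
Your proof is correct and follows exactly the route the paper intends: the paper's "proof" of this lemma is the single sentence "Repeating the arguments as in the proof of Lemma~\ref{lemma:linDrift}, one gets...", i.e.\ Young's inequality \eqref{convIneq2} with $q=\frac{2r}{r+2}$ combined with the regularized Gaussian bound in \eqref{reg2dKernelEst}, which is precisely what you do. The exponent check $1-\frac{1}{q}=\frac{1}{2}-\frac{1}{r}$ and the endpoint verification are all in order.
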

\begin{proposition}
\label{sec3:prop1:regNLSDE}
Let $T>0, \chi >0, \lambda\geq 0$, $ c_0 \in H^1(\R^2)$ and $\rho_0$ be a density function on $\R^2$. Then, for any $\varepsilon>0$, Equation \eqref{NLSDEd2REG} admits a unique
 strong solution. Moreover, the one dimensional time marginals of the law of the solution admit probability density functions,  $(p_t^\varepsilon)_{t\leq T}$. In addition, for $t \in (0,T)$, $p_t^\varepsilon$ satisfies the following mild equation in the sense of the distributions:
\begin{equation}
\label{mildEqReg}
p_t^\varepsilon= g_t\ast \rho_0 - \sum_{i=1}^2 \int_0^t \nabla_i g_{t-s}\ast (p_s^\varepsilon b^{\varepsilon,i}(s,\cdot;p^\varepsilon))ds .
\end{equation}
\end{proposition}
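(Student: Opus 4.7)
The strategy is to treat \eqref{NLSDEd2REG} as a classical McKean--Vlasov SDE with bounded, Lipschitz coefficients, falling within the scope of the general Sznitman-type existence/uniqueness framework announced in the Appendix. For fixed $\varepsilon>0$ the mollification turns both $b_0^\varepsilon$ and $K^\varepsilon$ into uniformly bounded, smooth objects: by \eqref{reg2dKernelEst} with $q=\infty$ one has $\|K_t^{\varepsilon,i}\|_{L^\infty(\R^2)}\leq C_\varepsilon$ on $[0,T]$, and Lemma~\ref{lemma:linDriftREG} with $r=\infty$ yields the same for $b_0^\varepsilon$. Consequently, for any continuous path $(\mu_s)_{s\leq T}$ of probability measures on $\R^2$, the functional $b^\varepsilon(t,x;\mu)$ is uniformly bounded on $[0,T]\times\R^2$ and Lipschitz in $x$, uniformly in $(t,\mu)$, since for fixed $\varepsilon>0$ derivatives of $K^\varepsilon$ and $g^\varepsilon$ enjoy analogous uniform bounds.

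To run the Picard iteration on measures on path space, I would set up the map $m\mapsto \Phi(m)$ that, to a law $m$ on $\Cc([0,T];\R^2)$ with marginals $(\mu_s)$, associates the law of the unique strong solution to the linear SDE obtained by freezing the drift to $b^\varepsilon(\cdot,\cdot;\mu)$. The key contraction estimate follows from the boundedness of $K^\varepsilon$: for two paths of marginals $(\mu_s),(\nu_s)$,
\begin{equation*}
\sup_{x\in\R^2}|b^\varepsilon(t,x;\mu)-b^\varepsilon(t,x;\nu)|
\leq \chi\int_0^t \|K_{t-s}^\varepsilon\|_{L^\infty(\R^2)}\,\|\mu_s-\nu_s\|_{TV}\,ds
\leq C_\varepsilon\int_0^t \|\mu_s-\nu_s\|_{TV}\,ds.
\end{equation*}
Combined with a Girsanov/Gronwall argument on a short time interval and iteration up to $T$, this yields a unique strong solution to \eqref{NLSDEd2REG}. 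This is precisely where the general Appendix statement, which axiomatises these ingredients for drifts depending on the whole past of the marginals (the non-Markovian feature being the main deviation from Sznitman's original setup), is invoked.

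For the existence of densities, since the drift $b^\varepsilon(\cdot,\cdot;\mu^\varepsilon)$ is bounded on $[0,T]\times\R^2$ and $X_0\sim\rho_0$ is a probability density, Girsanov's theorem applied on $[0,T]$ shows that $\mathcal{L}(X_t^\varepsilon)$ is absolutely continuous with respect to $\mathcal{L}(X_0+W_t)$, whose density is $g_t\ast\rho_0$. Hence $\mu_t^\varepsilon=p_t^\varepsilon(x)\,dx$ for some $p_t^\varepsilon\in L^1(\R^2)$.

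Finally, the mild equation \eqref{mildEqReg} is obtained by writing the weak Fokker--Planck equation for $p^\varepsilon$ and using Duhamel's formula. Concretely, for $f\in C_K^2(\R^2)$, It\^o's formula and Fubini give
\begin{equation*}
\int f(x)\,p_t^\varepsilon(x)\,dx=\int f\,\rho_0+\int_0^t\int\Bigl[\tfrac{1}{2}\Delta f(x)+\nabla f(x)\cdot b^\varepsilon(s,x;p^\varepsilon)\Bigr]p_s^\varepsilon(x)\,dx\,ds,
\end{equation*}
which is the distributional form of $\partial_t p_t^\varepsilon=\tfrac{1}{2}\Delta p_t^\varepsilon-\nabla\cdot(p_t^\varepsilon\, b^\varepsilon)$. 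Representing the heat semigroup as convolution with $g_t$ and distributing the divergence componentwise gives exactly \eqref{mildEqReg}. The main subtlety of the whole proof is the non-Markovian dependence of the drift on the full path of marginals; everything else is standard once the uniform bounds on $K^\varepsilon$ and $b_0^\varepsilon$ (which hold for $\varepsilon$ fixed but degenerate as $\varepsilon\to 0$, to be recovered uniformly only in the next section via the estimate \eqref{d2exCond}) are in place.
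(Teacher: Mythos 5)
Your proposal follows essentially the same route as the paper: for fixed $\varepsilon$ the regularized kernel and drift are bounded and Lipschitz in space, so the Sznitman-type fixed-point theorem of the Appendix (Theorem~\ref{th:smoothInter}) gives strong well-posedness, Girsanov's theorem gives the densities, and It\^o's formula plus Duhamel gives \eqref{mildEqReg}. The only cosmetic difference is that your sketched contraction is phrased in total variation using the sup bound on $K^\varepsilon$, whereas the Appendix argument you ultimately invoke runs the contraction in the (truncated) Wasserstein distance using the Lipschitz bound; since you defer to that theorem anyway, this changes nothing.
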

\begin{proof}
It is clear that there exists $C_\varepsilon >0$ such that for any $ t \in (0,T)$ and any $x,y \in \R^2$, one has
\begin{equation*}
|b_0^\varepsilon(t,x)-b_0^\varepsilon(t,y)|+ |K_t^\varepsilon(x)-K_t^\varepsilon(y)|\leq C_\varepsilon|x-y|~~ \text{ and } ~~|b_0^\varepsilon(t,x)|+|K_t^\varepsilon(x)|\leq C_\varepsilon.
\end{equation*}
Thus, Theorem \ref{th:smoothInter} implies that the strong solution to Eq. \eqref{NLSDEd2REG} is uniquely well defined.
 In addition, as the drift term is bounded, we can apply Girsanov's transformation and conclude that the one dimensional time marginals of the law of the solution admit probability density functions. By classical arguments (see e.g. \cite{Mi-De}), one can prove that for $t \in (0,T)$, $p_t^\varepsilon$ satisfies \eqref{mildEqReg} in sense of the distributions.
\end{proof}
\subsection{Density estimates}
 For a $1<q<\infty$, let us define
\begin{equation}
\label{ch:d2:norm}
\mathcal{N}_q^\varepsilon(t):= \sup_{s\in (0,t)} s^{1-\frac{1}{q}}\|p_s^\varepsilon\|_{L^q(\R^2)}.
\end{equation}
The following lemma provides a first estimate for 
$\mathcal{N}_q^\varepsilon(t)$ for a fixed $\varepsilon>0$. This estimate is not the optimal one in $\varepsilon$, but it is necessary in order to be sure that all the quantities we work with are well defined. Also, it will be used in order to obtain the limit behaviour of $\mathcal{N}_q^\varepsilon(t)$ as $t\to 0$.
\begin{lemma}
\label{lemma:firstEst}
Let $0<t\leq T$ and $\varepsilon >0$ fixed. For any $1<q<\infty$, there exists $C_\varepsilon(T, \chi)>0$ such that 
\begin{equation}
\label{lemma:firstEst:1}
\mathcal{N}_q^\varepsilon(t)\leq C_\varepsilon(T, \chi).
\end{equation}
Moreover, one has
\begin{equation}
\lim_{t\to 0}\mathcal{N}_q^\varepsilon(t) =0.
\label{lemma:firstEst:2}
\end{equation}
\end{lemma}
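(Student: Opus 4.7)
The plan is to read off from the mild equation~\eqref{mildEqReg} a self-consistent inequality for $\mathcal{N}_q^\varepsilon(t)$, exploiting that for \emph{fixed} $\varepsilon>0$ the regularised drift is uniformly bounded (the resulting constant will blow up in $\varepsilon$, but this is admissible since the present bound is explicitly advertised as non-optimal in $\varepsilon$). The short-time smallness will fall out of Lemma~\ref{lemma:Brezis}.

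First I would establish that $\|b^{\varepsilon,i}(s,\cdot;p^\varepsilon)\|_{L^\infty(\R^2)}\leq C_\varepsilon(T,\chi)$ uniformly in $s\in(0,T]$. The regularisation $K_t^\varepsilon=\tfrac{t^2}{(t+\varepsilon)^2}K_t$ is uniformly bounded in $(t,x)$ by $C\varepsilon^{-3/2}$ after optimising the Gaussian; and $b_0^\varepsilon(t,\cdot)=\chi e^{-\lambda t}\nabla c_0\ast g_t^\varepsilon$ is controlled in $L^\infty$ by the $L^2\!\times\!L^2$ convolution inequality applied to $\nabla c_0\in L^2(\R^2)$ and $g_t^\varepsilon\in L^2(\R^2)$ (whose $L^2$-norm is bounded by $C_2(2)/\sqrt{\varepsilon}$). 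Integrating $K^\varepsilon$ in time then gives the claimed bound.

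Next I would take the $L^q$-norm of~\eqref{mildEqReg}. Using the convolution inequality~\eqref{convIneq} with $\|\nabla_i g_{t-s}\|_{L^1(\R^2)}=C_1(1)(t-s)^{-1/2}$ (Lemma~\ref{lemma:kerneld2Lp}), the $L^\infty$ bound on the drift, and $\|g_t\ast\rho_0\|_{L^q(\R^2)}\leq \|g_t\|_{L^q(\R^2)}\|\rho_0\|_{L^1(\R^2)}\leq C_2(q)t^{-(1-1/q)}$ (Lemma~\ref{lemma:gauss2dLq}), I obtain
\begin{equation*}
\|p_t^\varepsilon\|_{L^q(\R^2)}\leq \|g_t\ast\rho_0\|_{L^q(\R^2)}+2C_\varepsilon(T,\chi)C_1(1)\int_0^t(t-s)^{-1/2}s^{-(1-1/q)}\mathcal{N}_q^\varepsilon(s)\,ds.
\end{equation*}
Multiplying by $t^{1-1/q}$, taking the supremum over $(0,t]$ and applying Lemma~\ref{lemma:stdComputation} gives
\begin{equation*}
\mathcal{N}_q^\varepsilon(t)\leq B(t)+2C_\varepsilon(T,\chi)C_1(1)\,\beta\!\left(1-\tfrac{1}{q},\tfrac{1}{2}\right)t^{1/2}\mathcal{N}_q^\varepsilon(t),
\end{equation*}
with $B(t):=\sup_{s\leq t}s^{1-1/q}\|g_s\ast\rho_0\|_{L^q(\R^2)}$. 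For $t\leq t_0$ small enough that $2C_\varepsilon(T,\chi)C_1(1)\beta\,t_0^{1/2}\leq 1/2$, absorbing the self-referential term yields $\mathcal{N}_q^\varepsilon(t)\leq 2B(t)\leq 2C_2(q)$, which already covers~\eqref{lemma:firstEst:1} on $[0,t_0]$ and, together with $B(t)\to 0$ from Lemma~\ref{lemma:Brezis}, proves~\eqref{lemma:firstEst:2}.

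The main obstacle is extending \eqref{lemma:firstEst:1} from $[0,t_0]$ to the whole interval $[0,T]$, since the factor $t^{1/2}$ cannot be made small globally. I would handle this by iterating the mild equation once: plugging the bound for $\|p_s^\varepsilon\|_{L^q}$ into itself and swapping integrals via Fubini using $\int_u^t(t-s)^{-1/2}(s-u)^{-1/2}ds=\beta(1/2,1/2)=\pi$ converts the doubly-singular kernel into a non-singular one, leaving an inequality of the shape $\|p_t^\varepsilon\|_{L^q(\R^2)}\leq f(t)+\pi\,[2C_\varepsilon(T,\chi)C_1(1)]^2\int_0^t\|p_u^\varepsilon\|_{L^q(\R^2)}\,du$ where $f$ is locally integrable on $[0,T]$. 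Classical Gronwall then delivers $\|p_t^\varepsilon\|_{L^q(\R^2)}\leq C_\varepsilon(T,\chi)\,t^{-(1-1/q)}$, which is exactly the uniform-in-$t$ bound on $\mathcal{N}_q^\varepsilon(T)$ we wanted.
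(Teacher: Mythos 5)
Your route is genuinely different from the paper's. The paper never writes a self-referential inequality in $\|p_t^\varepsilon\|_{L^q(\R^2)}$: for $1<q<2$ it applies the convolution inequality \eqref{convIneq} the other way round, putting the $L^q$-norm on $\nabla_i g_{t-s}$ (integrable in time precisely because $q<2$) and the $L^1$-norm on $p_s^\varepsilon b_s^{\varepsilon,i}$, which is bounded by $C_\varepsilon(1+t)$ since $p_s^\varepsilon$ is a probability density. This gives \eqref{lemma:firstEst:1} and \eqref{lemma:firstEst:2} in one stroke for $q\in(1,2)$, with no absorption and no Gronwall; the range $q\geq 2$ is then reached by a bootstrap, splitting $1+\frac1q=\frac{1}{p_1}+\frac{1}{p_2}$ with $p_1,p_2<2$ and reusing the small-$q$ result. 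Your scheme (absorb the self-referential term for small $t$, then iterate the mild equation once and apply Gronwall to cover $[0,T]$) is a legitimate alternative and your computations — the drift bound $C\varepsilon^{-1/2}+Ct\varepsilon^{-3/2}$, the exponent bookkeeping giving the factor $t^{1/2}$, and the Fubini identity $\int_u^t(t-s)^{-1/2}(s-u)^{-1/2}\,ds=\pi$ — are all correct.

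There is, however, one genuine gap: both the absorption step and the Gronwall step presuppose that $t\mapsto\|p_t^\varepsilon\|_{L^q(\R^2)}$ is finite (indeed locally integrable) to begin with. The implication ``$\mathcal{N}_q^\varepsilon(t)\leq B(t)+\tfrac12\mathcal{N}_q^\varepsilon(t)$ hence $\mathcal{N}_q^\varepsilon(t)\leq 2B(t)$'' is vacuous if $\mathcal{N}_q^\varepsilon(t)=+\infty$, and the integral form of Gronwall's lemma likewise requires the unknown to be locally integrable. At this stage of the paper the only available information is Proposition~\ref{sec3:prop1:regNLSDE} (existence of the densities via Girsanov), which does not by itself give $p_t^\varepsilon\in L^q(\R^2)$; establishing that finiteness is precisely the content of \eqref{lemma:firstEst:1}. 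This is exactly the circularity the paper's choice of exponents is designed to avoid: by pairing the kernel with $\|p_s^\varepsilon\|_{L^1(\R^2)}=1$ (resp.\ with an already-controlled $L^{p_2}$-norm, $p_2<2$), the right-hand side is finite \emph{a priori}. To repair your argument you would need either a preliminary finiteness statement (e.g.\ running your estimate on Picard iterates, or extracting an $L^q$ bound from the Girsanov density and H\"older), or simply to reorganise the exponents as the paper does.
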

As $K^\varepsilon$ is smooth, we can propose a simplified version of the arguments in \cite[p. 285-286]{BrezisCazenave} for the proof of \eqref{lemma:firstEst:2}.
\begin{proof}
The drift of the regularized stochastic equation is bounded. Indeed, $|K^\varepsilon_t| \leq \frac{C}{\varepsilon^{3/2}}$ and  Lemma \ref{lemma:linDriftREG} imply
$$\|b^{\varepsilon,i}(t,\cdot;p^\varepsilon)\|_{L^\infty(\R^2)}\leq \frac{C}{\sqrt{\varepsilon}} +  \frac{Ct}{\varepsilon^{3/2}} =: C_\varepsilon(1 + t).$$
For $1<q<\infty$ and $q'$ such that $\frac{1}{q}+\frac{1}{q'}=1$ integrate \eqref{mildEqReg} w.r.t. a test function $f\in L^{q'}(\R^2)$ and apply H\"{o}lder's inequality. It comes
\begin{multline}
\label{prop1:eq1}
\left| \int p_t^\varepsilon(x) f(x)dx \right|\leq \|f\|_{L^{q'}(\R^2)}\Big(\|g_t \ast \rho_0\|_{L^q(\R^2)}  \\
  + \sum_{i=1}^2\int_0^t \|\nabla_i g_{t-s}\ast (p_s^\varepsilon b_s^{\varepsilon,i})\|_{L^q(\R^2)}ds\Big).
\end{multline}
Now, we split the proof in two parts: $q\in(1,2)$ and $q\geq 2$.

 Assume $q\in(1,2)$.
The above drift bound and the convolution inequality \eqref{convIneq} applied in \eqref{prop1:eq1}, lead to
$$\|p_t^\varepsilon\|_{L^q(\R^2)}\leq \|g_t \ast \rho_0\|_{L^q(\R^2)} +  C_\varepsilon(1 + t)\sum_{i=1}^2\int_0^t \|\nabla_i g_{t-s}\|_{L^q(\R^2)}\|p_s^\varepsilon\|_{L^1(\R^2)} ds.$$
In view of \eqref{kerneld2Lp}, we deduce that
$$\int_0^t  \|\nabla_i g_{t-s}\|_{L^q(\R^2)}\|p_s^\varepsilon\|_{L^1(\R^2)} \ ds\leq C_q \int_0^t \frac{1}{(t-s)^{\frac{3}{2}-\frac{1}{q}}}= C_q t^{\frac{1}{q}-\frac{1}{2}}.$$
Thus,
\begin{equation}
\label{lemma:firstEst:eq1}
t^{1-\frac{1}{q}} \|p_t^\varepsilon\|_{L^q(\R^2)}\leq t^{1-1/q}\|g_t \ast \rho_0\|_{L^q(\R^2)} + t^{1-\frac{1}{q}+\frac{1}{q}-1/2} C_\varepsilon(1 + t).
\end{equation}
To get \eqref{lemma:firstEst:1}, in \eqref{lemma:firstEst:eq1} use the convolution inequality \eqref{convIneq} and that  $\|g_t\|_{L^q(\R^2)}= \frac{C}{t^{1-\frac{1}{q}}}$. To get \eqref{lemma:firstEst:2}  use Lemma \ref{lemma:Brezis} for the first term of the r.h.s. of \eqref{lemma:firstEst:eq1} and the fact that the second term tends to zero as $t\to 0$.

 Assume $q\geq 2$ and set $ \frac{1}{p_1}= \frac{1}{p_2}=\frac{1}{2}+ \frac{1}{2q}$. Then, $1<p_1, p_2<2$ and $1+\frac{1}{q}= \frac{1}{p_1}+\frac{1}{p_2}$. The convolution inequality \eqref{convIneq2} and the drift estimate above together with \eqref{prop1:eq1}, lead to
\begin{multline*}
\|p_t^\varepsilon\|_{L^q(\R^2)}\leq \|g_t \ast \rho_0\|_{L^q(\R^2)} \\+  C_\varepsilon(1 + T)\sum_{i=1}^2\int_0^t \|\nabla_i g_{t-s}\|_{L^{p_1}(\R^2)}\|p_s^\varepsilon\|_{L^{p_2}(\R^2)} ds.
\end{multline*}
In view of \eqref{kerneld2Lp} and the result for $q\in (1,2)$, one has
\begin{multline*}
t^{1-\frac{1}{q}}\|p_t^\varepsilon\|_{L^q(\R^2)}\leq t^{1-\frac{1}{q}}\|g_t \ast \rho_0\|_{L^q(\R^2)} +  \\C_\varepsilon(1 + T) t^{1-\frac{1}{q}} \int_0^t \frac{C_q}{(t-s)^{\frac{3}{2}-\frac{1}{p_1}}}\frac{C(\varepsilon, T)}{s^{1-\frac{1}{p_2}}}ds.
\end{multline*}
Apply Lemma \ref{lemma:stdComputation} and use the relation between the exponents. It comes:
$$t^{1-\frac{1}{q}}\|p_t^\varepsilon\|_{L^q(\R^2)}\leq t^{1-\frac{1}{q}}\|g_t \ast \rho_0\|_{L^q(\R^2)} +t^{1-\frac{1}{q}} \frac{C(\varepsilon, T)}{t^{\frac{1}{2}-\frac{1}{q}}} \beta(1-\frac{1}{p_2}, \frac{3}{2}-\frac{1}{p_1}).$$
To obtain the desired result, repeat the exact same steps as in the part $q\in (1,2)$ of the proof.
\end{proof}
 The following proposition enables one to control $\mathcal{N}_q^\varepsilon(t)$ for a fixed $q$ and uniformly in small $\varepsilon$.
\begin{proposition}
\label{prop1}
Let the assumptions of Theorem \ref{ch:nlsdeD2th1} hold.
Then, there exists $C>0$  such that for any $t\in (0,T]$, $\mathcal{N}_q^\varepsilon(t)$ defined in \eqref{ch:d2:norm} satisfies
$$ \forall 0<\varepsilon<1: ~~~ \mathcal{N}^\varepsilon_q(t)\leq C.$$
\end{proposition}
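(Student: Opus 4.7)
The plan is to insert the definition of $\mathcal{N}_q^\varepsilon(t)$ into the mild equation \eqref{mildEqReg}, derive a closed quadratic inequality in $\mathcal{N}_q^\varepsilon(t)$ whose coefficients match those of Theorem \ref{ch:nlsdeD2th1}, and then invert it using condition \eqref{d2exCond}. I would first take $L^q(\R^2)$-norms in \eqref{mildEqReg}. The Gaussian term is controlled by Young's inequality \eqref{convIneq} and Lemma \ref{lemma:gauss2dLq}, giving $\|g_t\ast \rho_0\|_{L^q(\R^2)}\le C_2(q)\,t^{-(1-1/q)}$. For each $i$, the convolution term is treated by the sharp Young inequality \eqref{convIneq2} with the exponent pair $(q',q/2)$, valid precisely because $q\in(2,4)$ makes $1/q'+2/q=1+1/q$. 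This places $\|\nabla_i g_{t-s}\|_{L^{q'}}\le C_1(q')(t-s)^{-(3/2-1/q')}$ outside and $\|p_s^\varepsilon b_s^{\varepsilon,i}\|_{L^{q/2}}$ inside, which I split by Hölder as $\|p_s^\varepsilon\|_{L^q}\|b_s^{\varepsilon,i}\|_{L^q}\le \mathcal{N}_q^\varepsilon(t)\,s^{-(1-1/q)}\|b_s^{\varepsilon,i}\|_{L^q}$.

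The key intermediate step is to show that $\|b_s^{\varepsilon,i}\|_{L^q}$ has exactly the same singularity $s^{-(1/2-1/q)}$ as $\|b_0^{\varepsilon,i}(s,\cdot)\|_{L^q}$, with a prefactor that is affine in $\mathcal{N}_q^\varepsilon(t)$. For the linear part, Lemma \ref{lemma:linDriftREG} delivers this estimate directly and uniformly in $\varepsilon\in(0,1)$, since $1/2-1/q>0$ implies $(s+\varepsilon)^{-(1/2-1/q)}\le s^{-(1/2-1/q)}$. For the nonlinear part, I would apply Young \eqref{convIneq} in the pair $(1,q)$ together with $\|K_{s-u}^{\varepsilon,i}\|_{L^1}\le C_1(1)(s-u)^{-1/2}$ from \eqref{reg2dKernelEst} and the defining bound $\|p_u^\varepsilon\|_{L^q}\le \mathcal{N}_q^\varepsilon(t)u^{-(1-1/q)}$; Lemma \ref{lemma:stdComputation} then yields the advertised $s^{-(1/2-1/q)}\,\beta(1-1/q,\,1/2)$ behaviour. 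Re-inserting these bounds and performing the outer $s$-integration via Lemma \ref{lemma:stdComputation} generates a second Beta factor $\beta(3/2-2/q,\,3/2-1/q')$, finite exactly because $q\in(2,4)$. After multiplying by $t^{1-1/q}$ and taking the supremum over $t$, I obtain
\[
\mathcal{N}_q^\varepsilon(t)\le C_2(q)+A\,\chi\,\|\nabla c_0\|_{L^2(\R^2)}\,\mathcal{N}_q^\varepsilon(t)+\widetilde B\,\chi\,\bigl(\mathcal{N}_q^\varepsilon(t)\bigr)^2,
\]
with $A$ as in Theorem \ref{ch:nlsdeD2th1} and $\widetilde B=C_1(q')\,C_1(1)\,\beta(1-1/q,1/2)\,\beta(3/2-2/q,3/2-1/q')$, so that $4\,C_2(q)\,\widetilde B=B^2$.

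To conclude, I would read this as $F(\mathcal{N}_q^\varepsilon(t))\ge 0$ for $F(N):=\widetilde B\chi N^2-(1-A\chi\|\nabla c_0\|_{L^2})N+C_2(q)$. Condition \eqref{d2exCond} is exactly the strict discriminant inequality $(1-A\chi\|\nabla c_0\|_{L^2})^2>4\widetilde B\chi C_2(q)$, so $F$ has two positive roots $0<N_-<N_+$ and $\{F\ge 0\}=[0,N_-]\cup[N_+,\infty)$. By Lemma \ref{lemma:firstEst}, $\mathcal{N}_q^\varepsilon(t)\to 0$ as $t\to 0$, and $t\mapsto \mathcal{N}_q^\varepsilon(t)$ is continuous (as the running supremum of the continuous, vanishing-at-$0$ function $s\mapsto s^{1-1/q}\|p_s^\varepsilon\|_{L^q}$, whose continuity follows from the smoothness of $p_s^\varepsilon$ guaranteed by Proposition \ref{sec3:prop1:regNLSDE}). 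A standard connectedness/bootstrap argument then traps $\mathcal{N}_q^\varepsilon(t)$ in the connected component $[0,N_-]$ for every $t\in (0,T]$, and $N_-$ depends only on the structural constants, not on $\varepsilon$.

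The main obstacle is the tight matching of the $s$-singularities: the product $p_s^\varepsilon\,b_s^{\varepsilon,i}$ must produce no singularity worse than $s^{-(3/2-2/q)}$, otherwise the outer Beta integral would blow up. This constraint is exactly why $q$ must lie in $(2,4)$, and why the smallness condition \eqref{d2exCond} must carry the $\sqrt{\chi}$ term (coming from the discriminant condition on the quadratic) rather than a plain $\chi$; a purely linear Grönwall argument is not available because the drift itself depends on the unknown density through a singular integral kernel.
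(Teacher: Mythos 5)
Your proposal is correct and follows essentially the same route as the paper: taking $L^q$-norms in the mild equation, applying the convolution inequality with the pair $(q',q/2)$, bounding $\|b_s^{\varepsilon,i}\|_{L^q}$ by a term affine in $\mathcal{N}_q^\varepsilon(t)$ with singularity $s^{-(1/2-1/q)}$, and closing a quadratic inequality whose discriminant condition is exactly \eqref{d2exCond}, then trapping $\mathcal{N}_q^\varepsilon(t)$ below the smaller root via $\lim_{t\to 0}\mathcal{N}_q^\varepsilon(t)=0$ and continuity. The only cosmetic differences are that the paper works by duality against test functions in $L^{q'}$ rather than taking norms directly, and that you make the connectedness step at the end more explicit than the paper does.
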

\begin{proof}
 Integrating \eqref{mildEqReg} w.r.t. a test function $f\in L^{q'}(\R^2)$, one starts from 
\begin{multline}
\label{prop2:eq1}
\left| \int p_t^\varepsilon(x) f(x)dx \right|\leq \|f\|_{L^{q'}(\R^2)}\\ \times \left(\|g_t \ast \rho_0\|_{L^q(\R^2)} + \sum_{i=1}^2\int_0^t \|\nabla_i g_{t-s}\ast (p_s^\varepsilon b_s^{\varepsilon,i})\|_{L^q(\R^2)}ds\right).
\end{multline}
Let us fix $i\in \{1,2\}$, $s<t$ and denote $A_s^i:=  \|\nabla_i g_{t-s}\ast (p_s^\varepsilon b_s^{\varepsilon,i})\|_{L^q(\R^2)}$. Observe that $\frac{1}{q'}+\frac{2}{q}=1+\frac{1}{q}$.  Apply the convolution inequality \eqref{convIneq2} and then use \eqref{kerneld2Lp}. It comes
 \begin{align*}
 A_s^i&\leq \|\nabla_i g_{t-s}\|_{L^{q'}(\R^2)}\|p_s^\varepsilon b_s^{\varepsilon,i}\|_{L^{\frac{q}{2}}(\R^2)}\\
 &\leq \frac{C_1(q') \|b_s^{\varepsilon,i}\|_{L^{q}(\R^2)}s^{1-\frac{1}{q}}\|p_s^\varepsilon\|_{L^{q}(\R^2)}}{(t-s)^{\frac{3}{2}-\frac{1}{q'}}s^{1-\frac{1}{q}}}\leq C_1(q') \mathcal{N}_q^\varepsilon(t) \frac{  \|b_s^{\varepsilon,i}\|_{L^{q}(\R^2)} }{(t-s)^{\frac{3}{2}-\frac{1}{q'}} s^{1-\frac{1}{q}}}.
 \end{align*}
In view of Lemma \ref{lemma:linDriftREG}, \eqref{reg2dKernelEst} and Lemma \ref{lemma:stdComputation}, we get
\begin{align*}
& \|b_s^{\varepsilon,i}\|_{L^{q}(\R^2)}\leq  \frac{C_2(\frac{2q}{q+2}) \chi \|\nabla c_0\|_{L^2(\R^2)} }{(s+\varepsilon)^{\frac{1}{2}-\frac{1}{q}}}+ \chi \int_0^s \|K^{i,\varepsilon}_{s-u}\|_{L^{1}(\R^2)}\|p_u^\varepsilon\|_{L^{q}(\R^2)}du\\
& \leq \frac{C_2(\frac{2q}{q+2}) \chi \|\nabla c_0\|_{L^2(\R^2)} }{s^{\frac{1}{2}-\frac{1}{q}}}+\chi C_1(1) \mathcal{N}_q^\varepsilon(t) \int_0^s  \frac{1}{\sqrt{s-u} \ u^{1-\frac{1}{q}}} \ ds\\
& \leq  \frac{C_2(\frac{2q}{q+2}) \chi \|\nabla c_0\|_{L^2(\R^2)} + \chi C_1(1) \mathcal{N}_q^\varepsilon(t) \beta (1-\frac{1}{q}, \frac{1}{2}) }{s^{\frac{1}{2}-\frac{1}{q}}}.
\end{align*}
It comes
$$A_s^i\leq C_1(q') \chi \mathcal{N}_q^\varepsilon(t) \frac{C_2(\frac{2q}{q+2})  \|\nabla c_0\|_{L^2(\R^2)} +  C_1(1) \mathcal{N}_q^\varepsilon(t) \beta (1-\frac{1}{q}, \frac{1}{2})  }{(t-s)^{\frac{3}{2}-\frac{1}{q'}} s^{\frac{3}{2}-\frac{2}{q}}}.$$
Plug this into \eqref{prop2:eq1}.  The condition $q\in(2,4)$
ensures that $\frac{3}{2}-\frac{2}{q}<1$ and $ \frac{3}{2}-\frac{1}{q'}<1$. Thus, Lemma~\ref{lemma:stdComputation} leads to
\begin{multline*}
\left| \int p_t^\varepsilon(x) f(x)dx \right|\leq \|f\|_{L^{q'}(\R^2)}\Big(\|g_t \ast \rho_0\|_{L^q(\R^2)} +2 C_1(q') \chi \mathcal{N}_q^\varepsilon(t)\\ \times \frac{C_2(\frac{2q}{q+2})  \|\nabla c_0\|_{L^2(\R^2)} +  C_1(1) \mathcal{N}_q^\varepsilon(t) \beta (1-\frac{1}{q}, \frac{1}{2})  }{t^{1-\frac{1}{q}}} \beta(\frac{3}{2}-\frac{2}{q}, \frac{3}{2}-\frac{1}{q'}) \Big).
\end{multline*}
Take $\sup_{\|f\|_{L^{q'}}=1}$ in the preceding inequality. 
It follows from the convolution inequality \eqref{convIneq} and \eqref{gauss2dLq} that
\begin{multline*}
 \|p_t^\varepsilon\|_{L^{q}(\R^2)}\leq \frac{C_2(q)}{t^{1-\frac{1}{q}}}+2C_1(q')\chi \beta(\frac{3}{2}-\frac{2}{q}, \frac{3}{2}-\frac{1}{q'}) \mathcal{N}_q^\varepsilon(t)\\
 \times \frac{C_2(\frac{2q}{q+2})  \|\nabla c_0\|_{L^2(\R^2)} +  C_1(1) \mathcal{N}_q^\varepsilon(t) \beta (1-\frac{1}{q}, \frac{1}{2})  }{t^{1-\frac{1}{q}}}. 
\end{multline*}
Let us denote 
\begin{align*}
&K_1:=2C_1(q') C_1(1) \beta(\frac{3}{2}-\frac{2}{q}, \frac{3}{2}-\frac{1}{q'}) \beta (1-\frac{1}{q}, \frac{1}{2}),\\
& K_2:=2 C_1(q')  C_2(\frac{2q}{q+2})  \beta(\frac{3}{2}-\frac{2}{q}, \frac{3}{2}-\frac{1}{q'}).
\end{align*}
After rearranging the terms, 
\begin{equation}
\label{d2:polynom}
0\leq K_1 \chi (\mathcal{N}_q^\varepsilon(t))^2+ (K_2 \chi \|\nabla c_0\|_{L^2(\R^2)}  -1) \mathcal{N}_q^\varepsilon(t) +C_2(q).
\end{equation}
Under the assumptions
$$K_2 \chi  \|\nabla c_0\|_{L^2(\R^2)}-1<0\text{ and }(K_2 \chi \|\nabla c_0\|_{L^2(\R^2)}-1)^2-4K_1C_2(q) \chi >0,$$
the polynomial function 
$$P(z)= K_1 \chi z^2 + (K_2 \chi \|\nabla c_0\|_{L^2(\R^2)}  -1)z +  C_2(q)$$
admits two positive roots. In view of Lemma \ref{lemma:firstEst} and the relation \eqref{d2:polynom}, one has that $\lim_{t\to 0}\mathcal{N}_q^\varepsilon(t)= 0$ and $P(\mathcal{N}_q^\varepsilon(t))>0$ for any $t\in [0,T]$. Necessarily, for any $t\in [0,T]$, $\mathcal{N}_q^\varepsilon(t)$ is bounded from above by the smaller root of the polynomial function $P(z)$. As the constants do not depend on $T$ and $\varepsilon$, this estimate is uniform in time and does not depend on the regularization parameter.

Note that the above condition is equivalent to 
$$K_2 \chi \|\nabla c_0\|_{L^2(\R^2)} + 2\sqrt{K_1C_2(q) \chi}<1.$$ 
Denote $A:=K_2$ and $B:=2\sqrt{C_2(q)K_1}$ to finish the proof.
\end{proof}
\begin{remark}
\label{explicitBoundd2}
Fix $q, A, B$ and $\chi$ as in Proposition \ref{prop1}. An upper bound $C$ of $\mathcal{N}_q^\varepsilon(t)$ is given by
\begin{equation}
\label{dens_estFIXq}
B_q(\chi):= \frac{1-A \chi \|\nabla c_0\|_{L^2(\R^2)}- \sqrt{(1-A \chi \|\nabla c_0\|_{L^2(\R^2)})^2-  B^2 \chi}}{2 K_1 \chi}.
\end{equation}
\end{remark}
Now, we will analyse $\mathcal{N}_r^\varepsilon(t)$, for different values of $r$ w.r.t. the $q\in (2,4)$ fixed in Proposition~\ref{prop1}. We will see that different arguments are used when $r<q$ and $r>q$ in order to control $\mathcal{N}_r^\varepsilon(t)$. The result obtained for $r<q$ will be used to control $\|b^{\varepsilon}_t\|_{L^r(\R^2)}$, for $r\geq 2$. All the estimates on $\mathcal{N}_r^\varepsilon(t)$ will be regrouped in the end of this section.
\begin{cor}
\label{cor:densSmallQ}
Let the assumptions of Theorem \ref{ch:nlsdeD2th1} hold. Then, for $1<r< q$, it holds
$$\forall ~ 0<\varepsilon < 1, ~~~\mathcal{N}_r^\varepsilon(T)\leq B_r(\chi).$$ 
\end{cor}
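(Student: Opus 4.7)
The plan is a one-step interpolation argument exploiting the fact that $p_s^\varepsilon$ is a probability density. Since the endpoint $r=q$ is already given by Proposition~\ref{prop1}, and the endpoint $r=1$ is the trivial mass conservation $\|p_s^\varepsilon\|_{L^1(\R^2)}=1$, log-convexity of $L^p$ norms bridges all intermediate exponents $r\in(1,q)$ essentially for free, without rerunning the mild-equation and Beta-function estimates of Proposition~\ref{prop1}.

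More precisely, for $r\in(1,q)$ I would choose $\theta\in(0,1)$ satisfying $\tfrac{1}{r}=\theta+\tfrac{1-\theta}{q}$, namely $\theta=\tfrac{q-r}{r(q-1)}$. The standard interpolation inequality for $L^p$ norms combined with Proposition~\ref{prop1} and Remark~\ref{explicitBoundd2} then gives
$$\|p_s^\varepsilon\|_{L^r(\R^2)}\leq \|p_s^\varepsilon\|_{L^1(\R^2)}^{\theta}\,\|p_s^\varepsilon\|_{L^q(\R^2)}^{1-\theta}\leq \left(\frac{B_q(\chi)}{s^{1-\frac{1}{q}}}\right)^{1-\theta},$$
uniformly in $\varepsilon\in(0,1)$. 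An elementary computation (using $1-\theta=\tfrac{q(r-1)}{r(q-1)}$) yields the exponent identity $(1-\theta)(1-\tfrac{1}{q})=1-\tfrac{1}{r}$. Multiplying the previous display by $s^{1-1/r}$ therefore produces a bound independent of $s$, and taking the supremum over $s\in(0,T]$ gives the claim with
$$B_r(\chi):=B_q(\chi)^{1-\theta}=B_q(\chi)^{\frac{q(r-1)}{r(q-1)}}.$$

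There is essentially no obstacle in this step; the only substantive check is the exponent identity above, which is routine. The reason the corollary is stated separately is that these lower-range density bounds feed into subsequent drift estimates of $\|b^{\varepsilon,i}(t,\cdot;p^\varepsilon)\|_{L^r(\R^2)}$ for $r\geq 2$ (via the Lemma~\ref{lemma:linDriftREG} mechanism and the convolution of $K^\varepsilon$ with $p^\varepsilon$), and they must be available with constants independent of $\varepsilon$, which is exactly what the interpolation preserves.
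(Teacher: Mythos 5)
Your proposal is correct and is essentially the paper's own proof: the paper likewise interpolates between the $L^1$ mass bound and the $L^q$ estimate of Proposition~\ref{prop1} via log-convexity of $L^p$ norms, with the same exponent $\frac{q(r-1)}{r(q-1)}$ appearing in \eqref{dens_estALLq} (the only difference is the cosmetic relabeling $\theta \leftrightarrow 1-\theta$).
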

\begin{proof}
Let $1<r< q$. Define $\theta:= \frac{1-\frac{1}{r}}{1-\frac{1}{q}}$. Then, $\frac{1}{r}= 1-\theta + \frac{\theta}{q}$. As $p^\varepsilon_t \in L^1(\R^2)$, "interpolation inequalities" (see \cite[p. 93]{Brezis}) lead to
$$\|p^\varepsilon_t\|_{L^r(\R^2)}\leq \|p^\varepsilon_t\|_{L^1(\R^2)}^{1-\theta} \|p^\varepsilon_t\|_{L^q(\R^2)}^{\theta}\leq \frac{C^\theta}{t^{\theta(1- \frac{1}{q})}}=:\frac{C_r}{t^{1- \frac{1}{r}}}.$$
\end{proof}
\begin{cor}
\label{cor:driftEstREG}
Let the assumptions of Theorem \ref{ch:nlsdeD2th1} hold. Then, for $2\leq r \leq \infty$, 
\begin{equation*}
\forall ~ 0<\varepsilon < 1, ~~~\|b^{\varepsilon}_t\|_{L^r(\R^2)}\leq \frac{C_r(\chi, \|\nabla c_0\|_{L^2(\R^2)} )}{t^{\frac{1}{2}- \frac{1}{r}}}
\end{equation*}
\end{cor}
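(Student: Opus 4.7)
The plan is to split the drift $b^\varepsilon=b_0^\varepsilon+\tilde b^\varepsilon$ where $\tilde b^\varepsilon$ denotes the non--linear (convolution) part, bound each piece separately by choosing the exponents in the convolution inequalities suitably, and then collect the contributions. The linear part is already handled by Lemma~\ref{lemma:linDriftREG}, which gives exactly
\[
\|b_0^{\varepsilon,i}(t,\cdot)\|_{L^r(\R^2)}\le \chi\|\nabla c_0\|_{L^2(\R^2)}\,\frac{C_2(\tfrac{2r}{r+2})}{t^{\frac12-\frac1r}},\qquad r\in[2,\infty],
\]
so the remaining work concerns $\|\tilde b^{\varepsilon,i}(t,\cdot)\|_{L^r(\R^2)}\le \chi\int_0^t\|K^{\varepsilon,i}_{t-s}\ast p_s^\varepsilon\|_{L^r(\R^2)}\,ds$.

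I would split the analysis according to whether $r$ lies above or below the integrability index $q\in(2,4)$ from Proposition~\ref{prop1}. For $r\in[2,q]$, I apply the basic convolution inequality \eqref{convIneq} with the kernel in $L^1(\R^2)$, namely
\[
\|K^{\varepsilon,i}_{t-s}\ast p_s^\varepsilon\|_{L^r(\R^2)}
\le \|K^{\varepsilon,i}_{t-s}\|_{L^1(\R^2)}\,\|p_s^\varepsilon\|_{L^r(\R^2)}
\le \frac{C_1(1)}{(t-s)^{1/2}}\cdot \frac{B_r(\chi)}{s^{\,1-1/r}},
\]
using \eqref{reg2dKernelEst} and Corollary~\ref{cor:densSmallQ} (or Proposition~\ref{prop1} when $r=q$). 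Lemma~\ref{lemma:stdComputation} then yields the time integral $t^{1/r-1/2}\beta(1-\tfrac1r,\tfrac12)$, which is exactly the claimed rate.

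For $r\in(q,\infty]$, the kernel is no longer in $L^r$ in a uniform way, so I instead pick $p_1\in(1,2)$ via $\tfrac{1}{p_1}+\tfrac{1}{q}=1+\tfrac1r$, which gives $\tfrac{1}{p_1}=1+\tfrac1r-\tfrac1q\in(\tfrac12,1)$ thanks to $q>2$. The generalized convolution inequality \eqref{convIneq2} combined with \eqref{reg2dKernelEst} and Proposition~\ref{prop1} gives
\[
\|K^{\varepsilon,i}_{t-s}\ast p_s^\varepsilon\|_{L^r(\R^2)}
\le \frac{C_1(p_1)}{(t-s)^{\,\frac12+\frac1q-\frac1r}}\cdot \frac{B_q(\chi)}{s^{\,1-\frac1q}}.
\]
Since $\tfrac12+\tfrac1q-\tfrac1r<1$ (because $\tfrac1q<\tfrac12$), Lemma~\ref{lemma:stdComputation} applies once more and produces $t^{1/r-1/2}\beta(1-\tfrac1q,\tfrac12+\tfrac1q-\tfrac1r)$, again matching the claim.

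The only real delicacy is the choice of exponents in the regime $r>q$: one has to simultaneously keep $p_1<2$ so that the kernel singularity $(t-s)^{-(3/2-1/p_1)}$ stays integrable at $s=t$, and keep $p_2\le q$ so that the uniform density estimate provided by Proposition~\ref{prop1} is available at $s=0$. The condition $q\in(2,4)$ precisely reconciles these two requirements for every $r\in(q,\infty]$, and the constants obtained are all independent of $\varepsilon\in(0,1)$, as required.
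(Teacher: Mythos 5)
Your proof is correct and follows essentially the same route as the paper: split off the linear part via Lemma~\ref{lemma:linDriftREG}, then treat the convolution term with \eqref{convIneq} (kernel in $L^1$, density in $L^r$) for $r$ up to $q$ and with \eqref{convIneq2} for $r$ above $q$ using the exponent $\tfrac{1}{p_1}=1+\tfrac1r-\tfrac1q$, finishing with Lemma~\ref{lemma:stdComputation} in both regimes. The only cosmetic difference is where the boundary case $r=q$ is placed, and your closing remark slightly overstates the role of $q<4$ (only $q>2$ is needed here; $q<4$ is what Proposition~\ref{prop1} requires), but neither affects the argument.
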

\begin{proof}
In view of Lemma \ref{lemma:linDriftREG}, one has for $i \in \{1,2\}$
\begin{equation}
\label{cor:driftEstREG:eq1}
\|b^{i,\varepsilon}_t\|_{L^r(\R^2)}\leq  \frac{C(\chi, \|\nabla c_0\|_{L^2(\R^2)} )}{t^{\frac{1}{2}- \frac{1}{r}}} + \chi \int_0^t\|K^{\varepsilon,i}_{t-s}\ast p_s^\varepsilon \|_{L^r(\R^2)}ds.
\end{equation}
Let $q\in (2,4)$ fixed in Proposition~\ref{prop1}. We split the proof in two parts: $r\in [2,q)$ and $r\in [q,\infty]$.

 For $r\in [2,q)$, Corollary \ref{cor:densSmallQ} immediately implies
$$\|K^{\varepsilon,i}_{t-s}\ast p_s^\varepsilon \|_{L^r(\R^2)}\leq \frac{C}{\sqrt{t-s} s^{1-\frac{1}{r}}}.$$

 For $ q\leq r \leq \infty$, choose $p_1$ such that $\frac{1}{p_1}:=1+\frac{1}{r}-\frac{1}{q}$. Observe that, as $2<q\leq r$, it follows that $\frac{1}{2}<\frac{1}{p_1}\leq 1$. Applying the convolution inequality \eqref{convIneq2} and Corollary \ref{cor:densSmallQ}, one has 
$$\|K^{\varepsilon,i}_{t-s}\ast p_s^\varepsilon \|_{L^r(\R^2)}\leq \frac{C}{(t-s)^{\frac{3}{2}-\frac{1}{p_1}} s^{1-\frac{1}{q}}}.$$

To finish the proof, in both cases, one plugs the obtained estimates in \eqref{cor:driftEstREG:eq1} and applies Lemma~\ref{lemma:stdComputation}.
\end{proof}
\begin{cor}
\label{cor:densBigQ}
Let the assumptions of Theorem \ref{ch:nlsdeD2th1} hold. Then, for $q<r < \infty$, one has
$$\forall ~ 0<\varepsilon < 1, ~~~ \mathcal{N}^\varepsilon_r(T)\leq B_r(\chi).$$
\end{cor}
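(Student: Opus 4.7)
The plan is to mirror the duality argument used to prove Proposition~\ref{prop1}, but to bootstrap on the already-established bounds: the $L^q$-density estimate $\|p_s^\varepsilon\|_{L^q(\R^2)}\leq B_q(\chi)\,s^{-(1-1/q)}$ from Proposition~\ref{prop1}, and the drift estimate from Corollary~\ref{cor:driftEstREG}. Since both of these factors are already controlled uniformly in $\varepsilon<1$, the resulting inequality will be linear in (indeed, independent of) $\mathcal{N}_r^\varepsilon$, so no extra smallness condition on $\chi$ beyond~\eqref{d2exCond} will be produced. This is the structural difference with Proposition~\ref{prop1}, whose self-bootstrapping nature forced the quadratic inequality~\eqref{d2:polynom}.

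Concretely, starting from the mild equation~\eqref{mildEqReg} tested against $f\in L^{r'}(\R^2)$ exactly as in~\eqref{prop2:eq1}, I would bound each term $\|\nabla_i g_{t-s}\ast(p_s^\varepsilon b_s^{\varepsilon,i})\|_{L^r(\R^2)}$ using the generalised convolution inequality~\eqref{convIneq2} with exponents $p,q_1$ satisfying $1+\tfrac{1}{r}=\tfrac{1}{p}+\tfrac{1}{q_1}$, then H\"older's inequality with $\tfrac{1}{q_1}=\tfrac{1}{q}+\tfrac{1}{m}$ for a suitable $m\in[2,\infty)$. The three resulting factors are then controlled by~\eqref{kerneld2Lp}, Proposition~\ref{prop1} and Corollary~\ref{cor:driftEstREG}, and Lemma~\ref{lemma:stdComputation} handles the time integral provided the two exponents $\alpha:=\tfrac{3}{2}-\tfrac{1}{p}$ and $\beta:=\tfrac{3}{2}-\tfrac{1}{q}-\tfrac{1}{m}$ lie strictly below $1$. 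This amounts to requiring $\tfrac{1}{m}\in\bigl(\tfrac{1}{2}-\tfrac{1}{q},\;\tfrac{1}{2}-\tfrac{1}{q}+\tfrac{1}{r}\bigr)$, an interval of length $\tfrac{1}{r}>0$ whose upper endpoint is strictly below $\tfrac{1}{2}$ thanks to the hypothesis $r>q$; hence an admissible $m\geq 2$ always exists.

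A direct exponent count shows that the two time powers add to exactly $\alpha+\beta=2-\tfrac{1}{r}$, so the time integral from Lemma~\ref{lemma:stdComputation} scales as $t^{-(1-1/r)}$, matching the weight inside $\mathcal{N}_r^\varepsilon$. Combining with the initial-data contribution $\|g_t\ast\rho_0\|_{L^r(\R^2)}\leq C_2(r)\,t^{-(1-1/r)}$ (from~\eqref{convIneq} and~\eqref{gauss2dLq}), multiplying through by $t^{1-1/r}$ and taking $\sup_{\|f\|_{L^{r'}(\R^2)}=1}$, yields $\mathcal{N}_r^\varepsilon(T)\leq B_r(\chi)$ with a constant independent of $T$ and of $\varepsilon<1$. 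The a~priori finiteness of $\mathcal{N}_r^\varepsilon(t)$ for each fixed $\varepsilon>0$, needed to justify these manipulations rigorously, is supplied by Lemma~\ref{lemma:firstEst}.

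The one delicate point is the choice of $m$: since the admissible interval for $\tfrac{1}{m}$ has length $\tfrac{1}{r}$ and shrinks as $r\to\infty$, the constant $B_r(\chi)$ produced this way will generically blow up as $r\to\infty$. This is harmless for the corollary, which only asserts a bound for each fixed finite $r\in(q,\infty)$; a concrete admissible choice is, for instance, the midpoint $\tfrac{1}{m}=\tfrac{1}{2}-\tfrac{1}{q}+\tfrac{1}{2r}$, which automatically lies in $(0,\tfrac{1}{2})$ under $r>q>2$.
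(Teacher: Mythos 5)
Your proposal is correct and follows essentially the same route as the paper: test the mild equation, apply the convolution inequality \eqref{convIneq2} to $\nabla_i g_{t-s}\ast(p_s^\varepsilon b_s^{\varepsilon,i})$, split $p_s^\varepsilon b_s^{\varepsilon,i}$ by H\"older using the already-established density and drift bounds, and close with Lemma~\ref{lemma:stdComputation}; the resulting estimate is indeed independent of $\mathcal{N}_r^\varepsilon$, so no new condition on $\chi$ appears. The only (immaterial) difference is the allocation of exponents: the paper fixes $\tfrac{1}{q_1}=\tfrac{1}{q_2}=\tfrac12+\tfrac{1}{2r}$ and places the density in $L^{\lambda_1 q_2}$ with $\lambda_1 q_2<q$ via Corollary~\ref{cor:densSmallQ}, whereas you keep the density in $L^q$ and let the drift exponent $m$ range over the admissible interval, with the same final scaling $t^{-(1-1/r)}$.
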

\begin{proof}
Let $1<q_1,q_2<2$ such that $\frac{1}{q_1} = \frac{1}{q_2}= \frac{1}{2}+\frac{1}{2r}$. Then, $1+\frac{1}{r}= \frac{1}{q_1} + \frac{1}{q_2}$. Convolution inequality \eqref{convIneq2} leads to 
$$\|p_t^\varepsilon\|_{L^r(\R^2)}\leq \|g_t\ast \rho_0\|_{L^r(\R^2)} + \sum_{i=1}^2\int_0^t\|\nabla_i g_{t-s}\|_{L^{q_1}(\R^2)} \|p_s^\varepsilon b_s^{\varepsilon,i}\|_{L^{q_2}(\R^2)} \ ds.$$
Let us apply H\"{o}lder's inequality for $\frac{1}{\lambda_1}+\frac{1}{\lambda_2}=1$ such that $\lambda_1= \frac{q}{2}$,  
$$\|p_s^\varepsilon b_s^{\varepsilon,i}\|_{L^{q_2}(\R^2)} \leq \|p_s^\varepsilon\|_{L^{\lambda_1 q_2}(\R^2)} \| b_s^{\varepsilon,i}\|_{L^{\lambda_2 q_2}(\R^2)}.$$
Notice that $1<\lambda_1 < 2$ since $2<q<4$ by hypothesis. Then, $\lambda_2>2$, thus $\lambda_2 q_2 >2$. In addition, $\lambda_1 q_2= \frac{q}{2} q_2 <q$. In view of Corollaries \ref{cor:densSmallQ} and \ref{cor:driftEstREG}, one has
$$\|p_s^\varepsilon b_s^{\varepsilon,i}\|_{L^{q_2}(\R^2)} \leq \frac{C}{s^{1-\frac{1}{\lambda_1 q_2}+ \frac{1}{2} - \frac{1}{\lambda_2 q_2}}}=\frac{C}{s^{\frac{3}{2} - \frac{1}{q_2}}}.$$
Therefore,
$$t^{1-\frac{1}{q}}\|p_t^\varepsilon\|_{L^r(\R^2)}\leq C + t^{1-\frac{1}{q}}\int_0^t\frac{C}{(t-s)^{\frac{3}{2}-\frac{1}{q_1}}s^{\frac{3}{2} - \frac{1}{q_2}}} \ ds.$$
Apply Lemma \ref{lemma:stdComputation} to finish the proof.
\end{proof}
Note that the choice of the constants $A$ and $B$ depends only on the initially chosen $q \in (2,4)$. One may analyze the constants in Condition \eqref{d2exCond} in function of this $q$ to get an optimal condition on $\chi$.
\begin{remark}
Fix $q$ and $\chi$ as in Proposition \ref{prop1}. Gathering all the above estimates one may explicit the constants $B_r(\chi)$ appearing in the Corrolaries \ref{cor:densSmallQ} and \ref{cor:densBigQ} in the following way:
\begin{equation}
\label{dens_estALLq}
B_r(\chi)=
\begin{cases}
& B_q(\chi)^{\frac{(r-1)q}{r(q-1)}} , r< q ;\\
& C_2(r) + C_1(\frac{2r}{r+1}) B_{\frac{qr}{r+1}}(\chi)(\chi \|\nabla c_0\|_{L^2(\R)}C_2(\frac{2r}{r+1})\\
& ~~~~~~~~~~~~~~~~~~~+ B_{\frac{2qr}{(q-2)(r+1)}\wedge q}(\chi)C_2(1\vee \frac{2rq}{3rq+q-4r-2})), r>q.
\end{cases}
\end{equation}
Here, $B_q(\chi)$ is fixed in \eqref{dens_estFIXq}.
\end{remark}
\section{NLMP and Keller-Segel PDE: Existence}
\label{chd2:Thmproof}
\subsection{Non linear martingale problem}
In this section we prove Theorem~\ref{ch:nlsdeD2th1}. First we show the tightness of the constructed probability laws.
\begin{proposition}
\label{prop2}
Let the assumptions of Theorem \ref{ch:nlsdeD2th1} hold. Let $\varepsilon_k:= \frac{1}{k}$, for $k \in \N$.  $\P^{\varepsilon_k}$ denotes the law of the solutions to \eqref{NLSDEd2REG} regularized with $\varepsilon_k$. 
Then, the probability laws $(\P^{\varepsilon_k})_{k\geq 1}$ are tight in $C([0,T];\R^2)$ w.r.t. $k\in \N$.
\end{proposition}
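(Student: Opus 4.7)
The plan is to establish tightness in $C([0,T];\R^2)$ via Kolmogorov's criterion. Since every $X^{\varepsilon_k}$ starts from the fixed law $\rho_0$, the tightness of initial distributions is automatic, so the whole task reduces to producing a moment estimate of the form $\E|X_t^{\varepsilon_k} - X_s^{\varepsilon_k}|^p \leq C|t-s|^{1+\alpha}$ with constants independent of $k$.

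First, I would decompose the increment of the SDE \eqref{NLSDEd2REG}:
$$X_t^{\varepsilon_k} - X_s^{\varepsilon_k} = (W_t - W_s) + \int_s^t b^{\varepsilon_k}(u, X_u^{\varepsilon_k}; \mu^{\varepsilon_k})\, du,$$
so that the Brownian part already supplies the desired scaling via $\E|W_t - W_s|^4 \leq C(t-s)^2$. Everything reduces to bounding the drift integral uniformly in $k$, and this is exactly where the hard analytic work has already been done in the previous section.

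Second, I would invoke Corollary \ref{cor:driftEstREG} with $r=\infty$: under the standing assumptions, there exists $C>0$ independent of $\varepsilon$ such that
$$\|b^{\varepsilon_k}(u,\cdot;\mu^{\varepsilon_k})\|_{L^\infty(\R^2)} \leq \frac{C}{\sqrt{u}}.$$
Since the right-hand side is integrable near zero, this gives the pathwise estimate
$$\left|\int_s^t b^{\varepsilon_k}(u, X_u^{\varepsilon_k}; \mu^{\varepsilon_k})\, du\right| \leq \int_s^t \frac{C}{\sqrt{u}}\, du = 2C(\sqrt{t}-\sqrt{s}) \leq 2C\sqrt{t-s},$$
uniformly in $k$ and in the trajectory. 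Raising to the fourth power, combining with the Brownian moment, and taking expectations yields
$$\E|X_t^{\varepsilon_k} - X_s^{\varepsilon_k}|^4 \leq C(t-s)^2,$$
with $C$ independent of $k$ and of $s,t \in [0,T]$. The Kolmogorov--Chentsov criterion then gives tightness of $(\P^{\varepsilon_k})$ in $C([0,T];\R^2)$.

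The main obstacle is, in fact, not present at this step: all of it was absorbed earlier into Proposition \ref{prop1} and its corollaries, which produced the uniform-in-$\varepsilon$ $L^r$ estimates on the marginals that make the $L^\infty$ drift bound \eqref{driftSpacePreliminary} uniform in the regularization parameter. The condition \eqref{d2exCond} enters only through those estimates, and it is precisely the fact that the singular-in-time drift $C/\sqrt{u}$ remains integrable on $[0,T]$ that makes the Kolmogorov moment bound trivial to close.
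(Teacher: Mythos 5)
Your proof is correct and follows essentially the same route as the paper: decompose the increment into Brownian and drift parts, bound the drift uniformly in $k$ by $C/\sqrt{u}$ via Corollary~\ref{cor:driftEstREG} with $r=\infty$, use $\sqrt{t}-\sqrt{s}\leq\sqrt{t-s}$, and conclude by Kolmogorov's criterion. The only cosmetic difference is that you fix the exponent $p=4$ while the paper works with a general $m>2$; the argument is identical.
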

\begin{proof}
For $m>2$ and $0<s<t\leq T$, observe that
\begin{multline*}
\E |X_t^\varepsilon - X_s^\varepsilon|^m\leq \E \left( \left(\int_s^t b^{\varepsilon,1}(u,X_u^\varepsilon)du \right)^{2}+ \left(\int_s^t b^{\varepsilon,2}(u,X_u^\varepsilon)du \right)^{2} \right)^\frac{m}{2}\\ + \E|W_t-W_s|^m.
\end{multline*}
In view of the drift estimate for $r=\infty$ in Corollary \ref{cor:driftEstREG}, one has
\begin{align*}
\E |X_t^\varepsilon - X_s^\varepsilon|^m &\leq \left(2\int_s^t \frac{C(\chi, \|\nabla c_0\|_{L^2(\R^2)} )}{\sqrt{u}} du \right)^{m} + C (t-s)^{\frac{m}{2}}\\
&\leq C(\chi, \|\nabla c_0\|_{L^2(\R^2)} ) (t-s)^{\frac{m}{2}}. 
\end{align*}
Then, Kolmogorov's criterion implies tightness.
\end{proof}
Now we prove two auxiliary lemmas useful for the proof of Theorem \ref{ch:nlsdeD2th1}.
\begin{lemma}
\label{lemma:aux1}
Let $t>0$ and $r\in (2,\infty]$. Then,
\begin{equation*}
\|b_0^{\varepsilon_k}(t,\cdot)-b_0(t,\cdot)\|_{L^r(\R^2)} \to 0, k\to \infty.
\end{equation*}
\begin{proof}
Notice that for $t>0$ and $x \in \R^2$
\begin{align*}
|b_0^{\varepsilon_k}(t,x)-b_0(t,x)|& \leq C \frac{\chi e^{-\lambda t} \varepsilon_k}{t(t+\varepsilon_k)}\left|\int_{\R^2}\nabla c_0(x-y)e^{-\frac{|y|^2}{2t}} \ dy\right|\\
&\leq \frac{\varepsilon_k \|\nabla c_0\|_{L^2(\R^2)}}{\sqrt{t}(t+\varepsilon_k)}.
\end{align*}
Thus, $\|b_0^{\varepsilon_k}(t,\cdot)-b_0(t,\cdot)\|_{L^\infty(\R^2)}\to 0, k\to \infty$.  
Similarly, for $t>0$ and $r>2$,
$$\|b_0^{\varepsilon_k}(t,\cdot)-b_0(t,\cdot)\|_{L^r(\R^2)}\leq \|\nabla c_0\|_{L^2(\R^2)}  \frac{\varepsilon_k}{t(t+\varepsilon_k)}C t^{\frac{1}{r}+\frac{1}{2}}. $$
Let $k\to \infty$ to finish the proof.
\end{proof}
\end{lemma}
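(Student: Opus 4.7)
The plan is to exploit the fact that $b_0^{\varepsilon_k}(t,\cdot) - b_0(t,\cdot)$ is, up to a scalar factor tending to zero, the unregularized drift itself, and then apply the $L^r$-estimate already established in Lemma~\ref{lemma:linDrift}.

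The key algebraic observation comes straight from the definition $g_t^\varepsilon(x) = \frac{t}{t+\varepsilon}g_t(x)$. By linearity of the convolution this yields
\[
b_0^{\varepsilon_k}(t,x) - b_0(t,x) = \Big(\tfrac{t}{t+\varepsilon_k}-1\Big)b_0(t,x) = -\frac{\varepsilon_k}{t+\varepsilon_k}\,b_0(t,x),
\]
so the task reduces to quantifying $\|b_0(t,\cdot)\|_{L^r(\R^2)}$ for a fixed $t>0$ and then multiplying by the small prefactor $\varepsilon_k/(t+\varepsilon_k)$.

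Next I would invoke Lemma~\ref{lemma:linDrift}, which supplies, for every $r \in [2,\infty]$,
\[
\|b_0^i(t,\cdot)\|_{L^r(\R^2)} \leq \chi\|\nabla c_0\|_{L^2(\R^2)}\,\frac{C_2\bigl(\tfrac{2r}{r+2}\bigr)}{t^{\frac{1}{2}-\frac{1}{r}}}.
\]
Combining this with the factorization above gives
\[
\|b_0^{\varepsilon_k}(t,\cdot)-b_0(t,\cdot)\|_{L^r(\R^2)} \leq \frac{\varepsilon_k}{t+\varepsilon_k}\cdot\frac{\chi\|\nabla c_0\|_{L^2(\R^2)}\,C_2\bigl(\tfrac{2r}{r+2}\bigr)}{t^{\frac{1}{2}-\frac{1}{r}}},
\]
and for any fixed $t>0$ and any $r \in (2,\infty]$, the prefactor $\varepsilon_k/(t+\varepsilon_k)$ sends the right-hand side to zero as $k\to\infty$.

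Given how directly the algebraic identity reduces everything to an already-proved estimate, I do not anticipate any serious obstacle. The only point worth confirming is that the bound from Lemma~\ref{lemma:linDrift} remains meaningful up to $r=\infty$; this case corresponds to $p=2$ in Young's convolution inequality, where $\|g_t\|_{L^2(\R^2)}$ is finite precisely because $\nabla c_0 \in L^2(\R^2)$. No further decomposition of $g_t^{\varepsilon_k}-g_t$ is necessary: the whole argument is essentially a one-line consequence of the prefactor $\tfrac{t}{t+\varepsilon_k}$ built into the definition of $g_t^{\varepsilon}$.
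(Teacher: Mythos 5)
Your proof is correct and is essentially the paper's own argument: the paper likewise extracts the factor $\varepsilon_k/(t+\varepsilon_k)$ from $g_t^{\varepsilon_k}-g_t=\bigl(\tfrac{t}{t+\varepsilon_k}-1\bigr)g_t$ and then bounds the remaining convolution $\nabla c_0\ast g_t$ in $L^\infty$ via Cauchy--Schwarz and in $L^r$ via Young's inequality, which is exactly the content of Lemma~\ref{lemma:linDrift} that you invoke. Your version is marginally cleaner in that it cites that lemma rather than redoing the convolution estimate inline.
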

\begin{lemma}
Let $t>0$, $1<r<2$ and $i\in \{1,2\}$. Then, for any $s\in (0,t)$, one has
\label{lemma:aux2}
\begin{equation*}
\|K^{\varepsilon_k,i}_{t-s} - K^{i}_{t-s}\|_{L^r(\R^2)} \to 0, k\to \infty.
\end{equation*}
\end{lemma}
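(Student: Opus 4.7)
The plan is to exploit the fact that the regularization is multiplicative in time, so that $K^{\varepsilon_k,i}_{t-s}$ and $K^i_{t-s}$ are pointwise proportional. Writing
$$K^{\varepsilon_k,i}_{t-s}(x) - K^{i}_{t-s}(x) = \left(\frac{(t-s)^2}{(t-s+\varepsilon_k)^2}-1\right) K^{i}_{t-s}(x),$$
the problem immediately reduces to controlling the scalar prefactor, since $K^i_{t-s}$ depends on $x$ but not on $k$, and can then be pulled out of the $L^r(\R^2)$ norm.

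First I would estimate the prefactor. Setting $\tau=t-s>0$, one has
$$\left|\frac{\tau^2}{(\tau+\varepsilon_k)^2}-1\right|=\frac{2\tau\varepsilon_k+\varepsilon_k^2}{(\tau+\varepsilon_k)^2}\leq \frac{2\varepsilon_k}{\tau}+\frac{\varepsilon_k^2}{\tau^2},$$
which tends to $0$ as $k\to\infty$ for every fixed $\tau>0$. Next I would invoke Lemma \ref{lemma:kerneld2Lp} to get the explicit bound
$$\|K^i_{t-s}\|_{L^r(\R^2)}=\frac{C_1(r)}{(t-s)^{3/2-1/r}},$$
which is finite (since $1<r<2$ and $t-s>0$; actually the formula is valid for every $1\leq r<\infty$). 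Combining the two yields
$$\|K^{\varepsilon_k,i}_{t-s} - K^{i}_{t-s}\|_{L^r(\R^2)}=\left|\frac{(t-s)^2}{(t-s+\varepsilon_k)^2}-1\right|\cdot \frac{C_1(r)}{(t-s)^{3/2-1/r}}\longrightarrow 0.$$

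There is essentially no obstacle here: the multiplicative structure of the regularization does all the work, and both factors are under explicit control. The restriction $1<r<2$ is not needed for the convergence itself; it presumably reflects the range of exponents that will be used when this lemma is applied (e.g.\ to pass to the limit in convolution terms where $p_s^{\varepsilon_k}$ appears in a dual $L^{r'}$ space). No uniform-in-$s$ control is claimed, which is consistent with the pointwise-in-$s$ formulation of the statement, so I do not need to worry about integrability of the rate near $s=t$.
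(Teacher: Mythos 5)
Your proof is correct and follows essentially the same route as the paper: both exploit that $K^{\varepsilon_k,i}_{t-s}$ is a scalar multiple of $K^i_{t-s}$, control the prefactor $\bigl|\tfrac{(t-s)^2}{(t-s+\varepsilon_k)^2}-1\bigr|\leq \tfrac{2(t-s)\varepsilon_k+\varepsilon_k^2}{(t-s+\varepsilon_k)^2}$, and use the explicit finiteness of $\|K^i_{t-s}\|_{L^r(\R^2)}$ (the paper bounds pointwise and then integrates, recovering the same $(t-s)^{\frac12+\frac1r}$ factor you get from Lemma~\ref{lemma:kerneld2Lp}). Your side remark that $1<r<2$ is not needed for this convergence, only for the later application, is also accurate.
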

\begin{proof}
 Fix $0<s<t$ and $x\in \R^2$. Notice that
$$|K^{\varepsilon_k,i}_{t-s}(x)- K^{i}_{t-s}(x)|\leq \frac{(t-s)\varepsilon_k + \varepsilon_k^2}{(t-s)^2 (t-s+\varepsilon_k)^2} |x^i|e^{\frac{|x|^2}{2(t-s)}}.$$
Thus, for any  $x\in \R^2$, we have that $|K^{\varepsilon_k,i}_{t-s}(x)- K^{i}_{t-s}(x)| \to 0, k\to \infty$. After integration, for any $1<r<2$ one has
$$\|K^{\varepsilon_k,i}_{t-s} - K^{i}_{t-s}\|_{L^r(\R^2)}\leq C_r\frac{(t-s)\varepsilon_k + \varepsilon_k^2}{(t-s)^2 (t-s+\varepsilon_k)^2} (t-s)^{\frac{1}{2}+\frac{1}{r}}. $$
Let $k\to \infty$ to finish the proof.
\end{proof}
\begin{proof}[Proof of Theorem \ref{ch:nlsdeD2th1}]
In view of Proposition \ref{prop2}, there exists a weakly convergent subsequence of 
$(\P^{\varepsilon_k})_{k\geq 1} $, that we will still denote by $(\P^{\varepsilon_k})_{k\geq1} $.
Denote its limit by $\P^\infty$. To prove Theorem~\ref{ch:nlsdeD2th1}, we will prove that $\P^\infty$ solves 
the martingale problem~\hyperref[defMP2d]{$(MP)$}.

Part $i)$ trivially holds.

Now we prove $ii).$ Define the functional $\Lambda_t(\varphi)$ by
 $$\Lambda_t(\varphi):= \int_{\R^2} \varphi(y) \P^\infty_t(dy), \quad \varphi\in C_K(\R^2).$$
By weak convergence we have
$$\Lambda_t(\varphi)= \lim_{k\to \infty} \int \varphi(y)p^{\varepsilon_k}_t(y)dy,$$
and thus for any  $1<r<\infty$ and its conjugate $r'$, in view of Proposition \ref{prop1} and Corollaries \ref{cor:densSmallQ} and \ref{cor:densBigQ} one has
$$|\Lambda_t(\varphi)|\leq\frac{C}{t^{1-\frac{1}{r}}} \|\varphi\|_{L^{r'}(\R^2)}.$$
Therefore, for each $0<t\leq T$, $\Lambda_t$ is a bounded linear functional 
on a dense subset of $L^{r'}(\R^2)$. Thus, $\Lambda_t$ can be extended to a linear 
functional on  $L^{r'}(\R^2)$. By Riesz-representation theorem (e.g. \cite[Thm. 4.11 and 4.14]{Brezis}), there exists 
a unique $p^\infty_t \in L^r(\R^2)$ such that $\|p^\infty_t \|_{L^r(\R^2)} 
\leq\frac{C}{t^{1-\frac{1}{r}}}$ and $p^\infty_t$ is the probability
density of~$\P^\infty_t(dy)$.

It remains to prove $iii)$. Set
\begin{multline*}
M_t^\infty:= f(w_t)-f(w_0)
-\int_0^t \big[ \triangle f(w_u)+\nabla f(w_u)\cdot \Big(b_0(u,w_u)+\\ \chi \int_0^u e^{-\lambda (u-\tau)}\int K_{u- \tau}(w_u-y) p^\infty_\tau(y)\ dy \ d\tau\big)\Big]du.
\end{multline*}
In order to prove that $(M_t^\infty)_{t\leq T}$ is a $\P^\infty$ martingale, we will check that for any $N\geq 1$, $0\leq t_1<\dots<t_N<s \leq t\leq T$ and any $\phi \in C_b((\R^2)^N)$,   one has
\begin{equation}
\label{proofTh1:existence:MP}
\E_{\P^\infty}[(M_t^\infty-M_s^\infty)\phi(w_{t_1}, \dots, w_{t_N})] = 0.
\end{equation}
As $\P^{\varepsilon_k}$ solves the non--linear martingale problem related to \eqref{NLSDEd2REG} with $\varepsilon_k = \frac{1}{k}$, one has
\begin{multline*}
M_t^k:= f(w_t)-f(x(0))- \chi\int_0^t \big[
 \triangle f(w_u)+\nabla f(w_u)\cdot(b^{\varepsilon_k}_0(u,w_u)\\
 + \chi \int_0^u e^{-\lambda (u-\tau)} (K^{\varepsilon_k}_{u-\tau}\ast 
p^{\varepsilon_k}_\tau)(w_u)d\tau\big)]du 
\end{multline*}
is a martingale under $\P^{\varepsilon_k}$. Thus,
\begin{align*}
    & 0=\E_{\P^{\varepsilon_k}}[(M^k_t-M^k_s)\phi(w_{t_1}, \dots, w_{t_N})] = \E_{\P^{\varepsilon_k}}[\phi(\dots)(f(w_t)-f(w_s))] \\
    &+ 
    \E_{\P^{\varepsilon_k}}[\phi(\dots)\int_s^t 
    \triangle f(w_u)du ]+\E_{\P^{\varepsilon_k}}[\phi(\dots)\int_s^t \nabla f(w_u)\cdot
    b_0^{\varepsilon_k}(u,w_u) du ]\\
    &+\chi \ \E_{\P^{\varepsilon_k}}[\phi(\dots)\int_s^t \nabla f(w_u)\cdot  \int_0^u e^{-\lambda (u-\tau)} (K^{\varepsilon_k}_{u-\tau}\ast 
    p^{\varepsilon_k}_\tau)(w_u)d\tau du ].
\end{align*}
Since $(\P^{\varepsilon_k})$ weakly converges to $\P^\infty$, the first two terms on the r.h.s. converge to their analogues in \eqref{proofTh1:existence:MP}. It remains to check the convergence of the last two terms. We will analyze separately the parts coming from the linear and non-linear drifts. 

\paragraph{Linear part} 
Observe that
\begin{align*}
& \E_{\P^{\varepsilon_k}}[\phi(\dots)\int_s^t \nabla f(w_u)\cdot
    b_0^{\varepsilon_k}(u,w_u) du ]- \E_{\P^\infty}[\phi(\dots)\int_s^t \nabla f(w_u)\cdot
    b_0(u,w_u) du ]\\
    & =\Big(\E_{\P^{\varepsilon_k}}[\phi(\dots)\int_s^t \nabla f(w_u)\cdot
   ( b_0^{\varepsilon_k}(u,w_u)- b_0(u,w_u)) du\Big) \\
    &+\Big( \E_{\P^{\varepsilon_k}}[\phi(\dots)\int_s^t \nabla f(w_u)\cdot
    b_0(u,w_u) du ]\\
    & ~~~~~~~~~ - \E_{\P^\infty}[\phi(\dots)\int_s^t \nabla f(w_u)\cdot
    b_0(u,w_u) du ]\Big)=: I_k+II_k.
\end{align*}
We start from $II_k$. Define for $x \in  C([0,T]; \R^2)$ the functional 
$$F(x):= \phi(x_{t_1}, \dots, x_{t_N}) \int_s^t \nabla f(x_u)\cdot b_0(u, x_u)du.$$
In view of Lemma \ref{lemma:linDrift}, for $u>0$ and $i=1,2$, the function $b_0^i(u, \cdot)$ is bounded and continuous on $\R^2$ and one has $\|b_0^i(t,\cdot)\|_{L^\infty(\R^2)}\leq \frac{C}{\sqrt{t}}$. By dominated convergence one gets that $F(\cdot)$ is  continuous. In addition, $F(\cdot)$ is bounded on  $C([0,T]; \R^2)$. Thus, by weak convergence, $II_k \to 0$, as $k \to \infty$.

We turn to $I_k$:
$$|I_k|\leq \|\phi\|_\infty \int_s^t \sum_{i=1}^2 \int_{\R^2}  |\nabla_i f(z)(b_0^{\varepsilon_k, i}(u,z)- b_0^i(u,z)) | p^{\varepsilon_k}_u(z)\ dz \ ds.$$
Apply the H\"{o}lder's inequality for $\frac{1}{\lambda}+\frac{1}{\lambda'}=1$ such that $1<\lambda<2$. In view of Corollary \ref{cor:densSmallQ}, one has
$$|I_k|\leq \|\phi\|_\infty \|\nabla f\|_\infty  \int_s^t \frac{C}{u^{1-\frac{1}{\lambda}}} \sum_{i=1}^2 \| b_0^{\varepsilon_k, i}(u,\cdot)- b_0^i(u,\cdot)\|_{L^{\lambda'}(\R^2)}  du.$$
In view of Lemma \ref{lemma:aux1}, $\| b_0^{\varepsilon_k, i}(u,\cdot)- b_0^i(u,\cdot)\|_{L^{\lambda'}(\R^2)}  \to 0$ as $k \to \infty$. In addition, Lemmas \ref{lemma:linDrift} and \ref{lemma:linDriftREG} lead to
$$\frac{C}{u^{1-\frac{1}{\lambda}}} \sum_{i=1}^2 \| b_0^{\varepsilon_k, i}(u,\cdot)- b_0^i(u,\cdot)\|_{L^{\lambda'}(\R^2)}  \leq \frac{C}{u^{\frac{1}{\lambda'}+\frac{1}{2}- \frac{1}{\lambda'}}}.$$
By dominated convergence, $I_k \to 0$, as $k\to \infty$. 
\paragraph{Non-linear part} 
As in the linear part, we decompose
\begin{align*}
  &\E_{\P^{\varepsilon_k}}[\phi(\dots)\int_s^t \nabla f(w_u)\cdot  \int_0^u (K^{\varepsilon_k}_{u-\tau}\ast 
    p^{\varepsilon_k}_\tau)(w_u)d\tau du ]\\
  &  - \E_{\P^\infty}[\phi(\dots)\int_s^t \nabla f(w_u)\cdot  \int_0^u (K_{u-\tau}\ast 
    p^\infty_\tau)(w_u)d\tau du ]\\
  &   \leq \Big(\E_{\P^{\varepsilon_k}}[\phi(\dots)\int_s^t \nabla f(w_u)\cdot  \int_0^u (K^{\varepsilon_k}_{u-\tau}\ast 
    p^{\varepsilon_k}_\tau)(w_u)d\tau du ]\\
   & - \E_{\P^{\varepsilon_k}}[\phi(\dots)\int_s^t \nabla f(w_u)\cdot  \int_0^u (K_{u-\tau}\ast 
    p^{\infty}_\tau)(w_u)d\tau du ]\Big)\\ 
    &+\Big(\E_{\P^{\varepsilon_k}}[\phi(\dots)\int_s^t \nabla f(w_u)\cdot  \int_0^u (K_{u-\tau}\ast 
    p^{\infty}_\tau)(w_u)d\tau du ]\\
     &- \E_{\P^\infty}[\phi(\dots)\int_s^t \nabla f(w_u)\cdot  \int_0^u (K_{u-\tau}\ast 
    p^\infty_\tau)(w_u)d\tau du ]\Big)\\
    & =: C_k + D_k.
\end{align*}
Start from $D_k$. Similarly to the linear part, we need the boundness and continuity of the functional
$$H(x):= \phi(x(t_1), \dots, x(t_N))\int_s^t \nabla f(x(u))\cdot  \int_0^u (K_{u-\tau}\ast 
    p^{\infty}_\tau)(x(u))d\tau du, $$
    where $ x\in \Cc([0,T];\R^2).$ 
The continuity comes from the fact that the kernel is a continuous function on $\R^2$ whenever $\tau<u$. Namely, if $x_n \in \Cc([0,T];\R^2)$ converges to $x\in \Cc([0,T];\R^2)$, then $K^i_{u-\tau}(x_n(u)-y) \to K^i_{u-\tau}(x(u)-y)$. In addition $|K^i_{u-\tau}(x_n(u)-y)p^\infty_\tau(y)|\leq \frac{C}{(u-\tau)^{3/2}}p^\infty_\tau(y)$, for $i \in \{1,2\}$, as $n \to \infty$. Thus, by dominated convergence, for  $\tau<u$ one has
$$ K^i_{u-\tau} \ast p^\infty_\tau(x_n(u))\to K^i_{u-\tau} \ast p^\infty_\tau(x(u)), \ n \to \infty.$$
For $\frac{1}{r}+ \frac{1}{r'}=1$ such that $r >2$ apply  H\"{o}lder's inequality and after the estimate in $ii)$. It comes
$$|K^i_{u-\tau} \ast p^\infty_\tau(x_n(u))|\leq \frac{C_r}{(u-\tau)^{\frac{3}{2}-\frac{1}{r'}}\tau^{1-\frac{1}{r}}}.$$

By dominated convergence,
$$\int_0^u (K^i_{u-\tau}\ast 
    p^\infty_\tau)(x_n(u))d\tau \to \int_0^u (K^i_{u-\tau}\ast 
    p^\infty_\tau)(x(u))d\tau, n \to \infty.  $$
Moreover, in view of Lemma \ref{lemma:stdComputation}, one has
$$\left| \nabla f(x_n(u))\cdot \int_0^uK_{u-\tau} \ast p^\infty_\tau(x_n(u))d \tau\right | \leq C \| \nabla f\|_\infty  \frac{\beta(1-\frac{1}{r},\frac{3}{2}-\frac{1}{r'})}{\sqrt{u}}.$$
Finally, after one more application of dominated convergence the continuity of the functional $H$ follows. This procedure obviously implies $H$ is a bounded functional on $\Cc([0,T];\R^2)$. Thus, by weak convergence, $D_k$ converges to zero.

 We turn to $C_k$. Let us just for this part denote by $b^i(t,x):=\int_0^t (K^{i}_{t-s}\ast p^\infty_s)(x) ds$ and  $b^{k,i}(t,x):=\int_0^t (K^{\varepsilon_k,i}_{t-s}\ast p^{\varepsilon_k}_s)(x)$. Assume for a moment that for any $t>0$ and $x\in \R^2$, one has
\begin{equation}
\label{driftConv}
\lim_{k\to \infty}\left|b^{k,i}(t,x)- b^i(t,x) \right|=0.
\end{equation} 
  Notice that 
 $$|C_k|\leq \|\phi\|_\infty \int_s^t \sum_{i=1}^2 \int_{\R^2}|\nabla_i f(z) (b^{k,i}(u,z)- b^i(u,z))|p^{\varepsilon_k}_u(z)dz.$$
 After H\"{o}lder inequality for $\frac{1}{r}+ \frac{1}{r'}=1$ such that $r>2$, one has 
 $$|C_k|\leq \|\phi\|_\infty \int_s^t \frac{C}{u^{1-\frac{1}{r'}}}  \sum_{i=1}^2 \left(\int |\nabla_i f(z)|^r |b^{k,i}(u,z)- b^i(u,z)|^r dz  \right)^{1/r} du.$$
Let $u>0$. In view of \eqref{driftConv}, $|b^{k,i}(u,z)- b^i(u,z)|^r \to 0$ as $k \to \infty$. Now, we do not omit $|\nabla_i f(z)|^q$ as in the linear part. Instead, we use it in order to integrate in space with respect to drift bounds. Namely, for $u>0$ and $i=1,2$, we have that $|b^{k, i}(u,\cdot)|+|b^i(u,\cdot)|\leq \frac{C}{\sqrt{u}}$. Thus,
 $$|\nabla_i f(z)|^r |b^{k,i}(u,z)- b^i(u,z)|^r \leq \frac{C}{u^{\frac{r}{2}}}|\nabla_i f(z)|^r.$$
 By dominated convergence, 
 $$\|\nabla_i f(\cdot)(b^{k,i}(u,\cdot)- b^i(u,\cdot)) \|_{L^r(\R^2)} \to 0, \  k \to \infty.$$
  Using that $\|b^{k,i}(u,\cdot)\|_{L^r(\R^2)}+\| b^i(u,\cdot)) \|_{L^r(\R^2)}\leq \frac{C}{u^{\frac{1}{2}-\frac{1}{r}}}$, one gets
 $$\frac{1}{u^{1-\frac{1}{r'}}} \|\nabla_i f(\cdot)(b^{k,i}(u,\cdot)- b^i(u,\cdot)) \|_{L^r(\R^2)}\leq  \|\nabla_i f(\cdot)\|_\infty  \frac{C}{u^{\frac{1}{2}-\frac{1}{r} + 1-\frac{1}{r'}}}$$
Thus, by dominated convergence, we get that $C_k\to 0$, as $k\to \infty$. 
 
As all the terms converge, we get that \eqref{proofTh1:existence:MP} holds true. Thus, the process $(M_t^\infty)_{t\leq T}$ is a $\P^\infty$ martingale.

To finish the proof, it remains to show \eqref{driftConv}.
For $t>0$, $x\in \R^2$ and $i \in {1,2}$, one has
 \begin{align*}
& |b^{k,i}(t,x)- b^i(t,x) |=\left|\int_0^t (K^{\varepsilon_k,i}_{t-s}\ast p^{\varepsilon_k}_s)(x) ds- \int_0^t (K^{i}_{t-s}\ast p^\infty_s)(x) ds \right|\\
&\leq \left|\int_0^t ((K^{\varepsilon_k,i}_{t-s}-K^{i}_{t-s})\ast p^{\varepsilon_k}_s)(x)\ ds\right| + \left|\int_0^t (K^{i}_{t-s}\ast ( p^{\varepsilon_k}_s-p^\infty_s))(x) ds \right|\\
&=: A_k + B_k.
\end{align*}
We start from $B_k$. For $s<t$ and $i=1,2$, the kernel $K^i_{t-s}(\cdot)$ is a continuous and bounded function on $\R^2$. Thus, by weak convergence we have that $\lim_{k \to \infty}(K^{i}_{t-s}\ast p^{\varepsilon_k}_s)(x)= (K^{i}_{t-s}\ast p^\infty_s)(x)$. In addition, for $r>2$ H\"{o}lder's inequality, part $ii)$ and Proposition \ref{prop1} lead to
$$|(K^{i}_{t-s}\ast p^{\varepsilon_k}_s)(x)- (K^{i}_{t-s}\ast p^\infty_s)(x) |\leq \frac{C_r}{(t-s)^{\frac{3}{2}- \frac{1}{r'} } s^{1-\frac{1}{r}} }.$$
As the bound is integrable in $(0,t)$, the dominated convergence theorem implies that $B_k\to 0$, as $k \to \infty$.

In $A_k$ we apply the H\"{o}lder's inequality with $1<r<2$ and the density bounds from Corollary \ref{cor:densSmallQ}. It comes
$$|A_k|\leq \int_0^t \|K^{\varepsilon_k,i}_{t-s} - K^{i}_{t-s}\|_{L^{r}(\R^2)}\frac{C_r}{s^{1-\frac{1}{r'}}}\ ds.$$
In view of \eqref{kerneld2Lp} and \eqref{reg2dKernelEst}, one has
\begin{equation*}
\|K^{\varepsilon_k,i}_{t-s} - K^{i}_{t-s}\|_{L^r(\R^2)}\leq \|K^{\varepsilon_k,i}_{t-s}\|_{L^r(\R^2)} + \|K^{i}_{t-s}\|_{L^r(\R^2)}\leq \frac{C_r}{(t-s)^{\frac{3}{2}-\frac{1}{r}}}.
\end{equation*}
Lemma \ref{lemma:aux2} and the preceding inequality enable us to 
apply dominated convergence. Thus, $A_k\to 0$, as $k \to \infty$.
\end{proof}
\subsection{Keller-Segel PDE}
\label{chd2:KSex}
In this section we prove Theorem \ref{existenceKSd2}. 
 
Fix $\chi>0$ as in \eqref{d2exCond}. Denote by $\rho(t,\cdot)\equiv q_t(x)$ the time marginals of 
the solution to \hyperref[defMP2d]{$(MP)$} constructed in Theorem \ref{ch:nlsdeD2th1}. As such, $\rho$ satisfies for any $1\leq q <\infty$,
$$\sup_{t\leq T} t^{1-\frac{1}{q}}\|\rho(t,\cdot)\|_{L^q(\R^2)}\leq B_q(\chi).$$
Here $B_q(\chi)$ are, depending on $q$, given in either \eqref{dens_estFIXq} or \eqref{dens_estALLq}. The corresponding drift function satisfies for any $1\leq r \leq \infty$,
$$t^{\frac{1}{2}-\frac{1}{r}} \|b(t,\cdot;\rho)\|_{L^r(\R^2)}\leq C_r(\chi).$$
 Following the arguments in Proposition 4.1 in \cite{Mi-De} one may derive the mild equation for $\rho(t,\cdot)$. The above estimates ensure that everything is well defined. One arrives to the following: For any $f\in C_K^\infty(\R^2)$ and any $t\in(0,T]$,
 \begin{align*}
 &\int f(y)\rho(t,y)\,dy = \int f(y) (g_t\ast \rho_0)(y) dy\\
 & -   \sum_{i=1}^2 \int f(y) 
\int_0^t[  \nabla_i g_{t-s} \ast
(b^i(s,\cdot;\rho)\rho(s,\cdot))](y) \ ds \  dy.
 \end{align*}
Thus $\rho$ satisfies in the sense of the distributions
\begin{equation}
\label{d2mildeq}
\rho(t,\cdot)= g_t\ast \rho_0 - \sum_{i=1}^2 \int_0^t \nabla_i g_{t-s} \ast(
b^i(s,\cdot;\rho))\rho(s,\cdot)) \,ds.
\end{equation}
Now, define the function $c(t,x)$ as 
$$ c(t,x) :=e^{-\lambda t}(g(t,\cdot)\ast c_0)(x) + \int_0^t 
e^{-\lambda s} \rho(t-s,\cdot)\ast g(s,\cdot)(x)~ds. $$
Thanks to the density estimates $c(t,x)$ is well defined for all $x\in \R^2$ as soon as $t>0$. Indeed, 
\begin{align*}
|c(t,x)|&\leq  \frac{\|c_0\|_{L^2(\R^2)}}{\sqrt{t}} +  C\int_0^t \|\rho(t-s,\cdot)\|_{L^2(\R^2)} \|g_s\|_{L^2(\R^2)}  \,ds \\
&\leq \frac{\|c_0\|_{L^2(\R^2)}}{\sqrt{t}} +  C\beta(\frac{1}{2},\frac{1}{2}). 
\end{align*}
It is obvious that $c(t,\cdot) \in L^2(\R^2)$. Thanks to the density estimates and the fact that $g_t$ is strongly derivable as soon as $t>0$, $c(t,x)$ is derivable in any point $x$ and
$$\frac{\partial}{\partial x_i}c(t,x)=e^{-\lambda t} \nabla_i ( g(t,\cdot)\ast c_0)(x) + \int_0^t 
e^{-\lambda s} (\rho_{t-s}\ast \nabla_i g(s,\cdot))(x)~ds.$$
 The fact that $c_0 \in H^1(\R^2)$ enables us to write $\nabla_i (g(t,\cdot)\ast c_0) =(g(t,\cdot)\ast \nabla_ic_0) $. Now, remark that $\chi\frac{\partial}{\partial x_i}c(t,x)$ is exactly the drift in \eqref{d2mildeq}. Thus, the couple $(\rho,c)$ satisfies Definition \ref{notionOfSold2}. 
 
 The following remark will be useful in the proof of uniqueness of the above constructed solution:
 \begin{remark}
 \label{rem:foruniq}
 Take the same $\chi>0$ as above. Let $(\tilde{\rho},\tilde{c})$ be a solution to the Keller-Segel equation in the sense of Definition \ref{notionOfSold2} with such $\chi$. Let $\tilde{C}_q(\chi)$ be the corresponding constant in \eqref{rho_space}. Applying step by step the same computations as in Proposition \ref{prop1} to the Eq. \eqref{eq:rho-KSd2} and afterwards all the computations from Corollaries \ref{cor:densSmallQ} and \ref{cor:densBigQ}, one obtains that for any $q>1$, $\tilde{C}_q(\chi)$ can be chosen equal to $B_q(\chi)$ fixed in \eqref{dens_estFIXq} and \eqref{dens_estALLq}.
 \end{remark}
\section{NLMP and Keller-Segel PDE : Uniqueness}
\label{sec:uniq}
\subsection{Keller-Segel PDE}
In principle, uniqueness of solutions to the Keller-Segel system should be derived from the stability theorem 2.6 in \cite{Corrias2014}. We believe the statement is true. However, at the beginning of the proof a term seems to be missing in the expression for the difference of two integral solutions at time $t+\tau$.  The missing term seems to jeopardise the Gronwall lemma used later on. We propose here a proof for uniqueness only which is not a stability result and, thus, does not require the use of the Gronwall lemma. The price to pay is an additional condition on the size of parameter~$\chi$. 
\begin{proof}[Proof of Theorem \ref{th:uniqKS}]
By assumption, $q$ is fixed such that $q\in (2,4)$.
 Assume there exist two pairs $(\rho^i,c^i)$, $i=1,2$, satisfying Definition \ref{notionOfSold2} with the same initial condition $(\rho_0, c_0)$. As such, one has 
$$\forall ~ 1\leq r<\infty~ \exists ~ C^i_r(\chi)> 0:~~~ \sup_{t\leq T}t^{1-\frac{1}{r}}\|\rho^i_t\|_{L^r(\R^2)}\leq C^i_r(\chi).$$
By Remark \ref{rem:foruniq}, one may assume that $C^1_r(\chi)=C^2_r(\chi)=B_r(\chi)$ given by \eqref{dens_estFIXq} for $r=q$ and by \eqref{dens_estALLq} otherwise. Then, after H\"older's inequality, one has
$$ \sup_{t\leq T}\sqrt{t}\|\nabla c^i_t\|_{L^\infty(\R^2)} \leq C ( \|\nabla c_0\|_{L^2(\R^2)}+B_q(\chi)).$$
To get uniqueness it suffices to show that for an $r>1$
$$\sup_{t\leq T} t^{1-\frac{1}{r}}\|\rho^1_t-\rho_t^2\|_{L^{r}(\R^2)}= 0.$$
Let us fix an $r$ such that $\frac{1}{r}+ \frac{1}{q}<1$. As $q\in (2,4)$, one has $r \in (1,2)$.  Denote $$f(T):=\sup_{t\leq T} t^{1-\frac{1}{r}}\|\rho^1_t-\rho_t^2\|_{L^{r}(\R^2)}.$$ From Definition~\ref{notionOfSold2}, one has
\begin{equation}
\label{proof:uniq:eq1}
\begin{split}
&t^{1-\frac{1}{r}}\|\rho^1_t-\rho_t^2\|_{L^{r}(\R^2)}\\ 
&\leq  \chi\sum_{i=1}^2\int_0^t \|\nabla_i g_{t-s} \ast (\nabla_i c^1(s,\cdot)~(\rho^1(s,\cdot)-\rho^2(s,\cdot))\|_{L^{r}(\R^2)}ds\\ 
&+ \chi \sum_{i=1}^2 \int_0^t \|\nabla_i g_{t-s} \ast (\rho^2(s,\cdot)~(\nabla_i c^1(s,\cdot)-\nabla_i c^2(s,\cdot))\|_{L^{r}(\R^2)}ds\\ &=: I+II.
\end{split}
\end{equation}
Apply the above estimate on $\nabla c^1_s$, then Convolution inequality \eqref{convIneq}, Eq.~\eqref{kerneld2Lp} and Lemma \ref{lemma:stdComputation}. It comes
\begin{multline*}
I\leq C  \chi ( \|\nabla c_0\|_{L^2(\R^2)}+B_q(\chi)) f(T)t^{1-\frac{1}{r}}\int_0^t\frac{\|\nabla_i g_{t-s} \|_{L^1(\R)}}{s^{3/2-1/r}}  \ ds\\ \leq C  \chi ( \|\nabla c_0\|_{L^2(\R^2)}+B_q(\chi))f(T) .
\end{multline*}
Then, apply successively Convolution inequality \eqref{convIneq} and H\"older's inequality for $q$ and its conjugate $q'$ to get
\begin{multline}
\label{proof:uniq:eq2}
II
 \leq t^{1-\frac{1}{r}} \chi\\ \times \sum_{i=1}^2 \int_0^t \|\nabla_i g_{t-s}\|_{L^r(\R^2)}\|\rho^2(s,\cdot)\|_{L^q(\R^2)} \| (\nabla_i c^1(s,\cdot)-\nabla_i c^2(s,\cdot))\|_{L^{q'}(\R^2)} \ ds .
\end{multline}
Now, let $x$ be such that $1+\frac{1}{q'}-\frac{1}{r}=\frac{1}{x}$. Remark that $1<x<2$, as we supposed $0<\frac{1}{q}+\frac{1}{r}<1$. In view of Convolution Inequality \eqref{convIneq}, Eq. \eqref{kerneld2Lp} and Lemma \ref{lemma:stdComputation}, one has
\begin{multline*}
\|\nabla_i c^1(t,\cdot)-\nabla_i c^2(t,\cdot)\|_{L^{q'}(\R)} \leq \int_0^t \|K_{t-s}\|_{L^{x}(\R^2)}\|\rho^1_s-\rho_s^2\|_{L^{r}(\R^2)} \ ds\\\leq f(T)\int_0^t \frac{C}{{(t-s)^{\frac{3}{2}-\frac{1}{x}}}s^{1-\frac{1}{r}}} ds =  \frac{C f(T)}{t^{\frac{1}{2}-\frac{1}{q'}}}.
\end{multline*}
In view of \eqref{kerneld2Lp} and the preceding estimate in \eqref{proof:uniq:eq2}, one has
$$II\leq t^{1-\frac{1}{r}}\chi C f(T)B_q(\chi) \int_0^s \frac{1}{(t-s)^{\frac{3}{2}-\frac{1}{r}}s^{1-\frac{1}{q}}s^{\frac{1}{2}-(1-\frac{1}{q})}} \ ds = \chi C f(T)B_q(\chi).$$
Use the estimates on $I$ and $II$ in \eqref{proof:uniq:eq1} and then take the $\sup_{t\leq T}$. It comes
$$f(T)\leq C_0\chi(\|\nabla c_0\|_{L^2(\R^2)}+ B_q(\chi))f(T).$$
Thus, if $C_0\chi(\|\nabla c_0\|_{L^2(\R^2)}+ B_q(\chi))<1$, one has the desired result. It remains to notice that $\chi B_q(\chi) \to 0$ as $\chi \to 0$. Thus, the conditions \eqref{d2exCond} and \eqref{chiCondUniq} are compatible.
\end{proof}
\subsection{From linearised martingale problem to NLMP}
The goal of this section is to prove Theorem \ref{UniquenessLawD2}. Let us fix $\chi>0$ that satisfies conditions \eqref{d2exCond} and \eqref{chiCondUniq}.

Theorem \ref{th:uniqKS} tells us that time marginal densities of a solution to~\hyperref[defMP2d]{$(MP)$} uniquely solve the mild equation~\eqref{eq:rho-KSd2}. Let us denote these uniquely determined time marginals with $(\rho_s)_{s\geq 0}$.
 Then, the standard argument to get uniqueness of a solution to \hyperref[defMP2d]{$(MP)$} is to uniqueness for the linearised version of~\hyperref[defMP2d]{$(MP)$}.

We define the linearised process
\begin{equation}
\label{linProcd2}
\begin{cases}
& d\tilde{X}_t= b_0(t, \tilde{X}_t) dt + \chi \int_0^t (K_{t-s}\ast \rho_s) (\tilde{X}_t) \ ds \ dt + dW_t,\\
& \tilde{X}_0 \sim \rho_0,
\end{cases}
\end{equation}
where $b_0$ is as in \eqref{def:b0andK}.
We will denote in this section 
$$b(t,x):=b_0(t, x) dt + \chi \int_0^t K_{t-s}\ast \rho_s (x) \ ds.$$
Having in mind the properties of $(\rho_s)_{s\geq 0}$, one has
\begin{equation}
\label{LrNormB}
\forall r \in [2,\infty] ~ \exists C_r: ~~~ \sup_{t\leq T} t^{\frac{1}{2}-\frac{1}{r}}\|b(t,\cdot)\|_{L^r(\R^2)}\leq C_r.
\end{equation}

As the drift of \eqref{linProcd2} is neither uniformly bounded in time and space, nor it is in the framework of Krylov and R\"ockner, there is no immediate result that gives us the uniqueness in law for \eqref{linProcd2}.

To get this uniqueness result we will use the so called transfer of uniqueness from a (linear) Fokker-Planck equation to the corresponding martingale problem (see \cite{Trevisan} and the references therein).

Thus, we define the (linear) martingale problem related to \eqref{linProcd2} starting from any time $0\leq s < T$ with initial probability density function on $\R^2$ that we denote by $q_s$.
\begin{mydef}
\label{defMP2dLIN}
Let $T>0$, $\chi>0$ and $0\leq s <T$. Consider the canonical space $\mathcal{C}([s,T];\R^2)$ equipped with its canonical filtration. Let $\Q$ be a probability measure on this canonical space and denote by $\Q_t$ its one dimensional time marginals. $\Q$ solves the non-linear 
martingale problem \hyperref[defMP2dLIN]{$(LMP)$} if:
\begin{enumerate}[(i)]
\item $\Q_s$ admits a probability density $q_s$.
\item For any $t\in (s,T]$, $\Q_t$ have densities $q_t$ w.r.t. Lebesgue measure on $\R$. In addition, they satisfy  $$\forall r\in (1,\infty)\ \exists C_r(\chi)>0: \quad \sup_{t\in (0,T)}(t-s)^{1-\frac{1}{r}}\|q_t\|_{ L^r(\R^2)}\leq C_r(\chi).$$
\item For any $f \in C_K^2(\R^2)$ the process $(M_t)_{s\leq t\leq T}$, 
defined as
$$M_t:=f(w_t)-f(w_s)-\int_s^t \big[\frac{1}{2} \triangle f(w_u)+\nabla f(w_u)\cdot b(u,w_u)]du $$
is a $\Q$-martingale where $(w_t)$ is the canonical process.
\end{enumerate}
\end{mydef} 
It is clear that any solution to \hyperref[defMP2d]{$(MP)$} is a solution to \hyperref[defMP2dLIN]{$(LMP)$} with $s=0$ and $q_0=\rho_0$.



To prove the uniqueness of solution to $(LMP)$ with $s=0$ and $q_0=\rho_0$, the goal is to use Lemma 2.12 in~\cite{Trevisan} in the sense $i)$ implies $ii)$ for $s=0$. This result is stated in the sequel once all the objects appearing in it are introduced.

Firstly, one derives in the usual way the following mild equation satisfied by the laws $(\tilde{p}_t)_{t\leq T}$ in the sense of the distributions:
\begin{equation}
\label{linMildEQd2}
\tilde{p}_t=g_t \ast \rho_0 -\sum_{i=1}^2 \int_0^t \nabla_ig_{t-s} \ast (b(s,\cdot)\tilde{p}_s) \,ds. 
\end{equation}
Now, we define the space $\mathcal{R}_{[0,T]}$ as follows
\begin{equation*}
\mathcal{R}_{[0,T]}:= \{ (\nu_t)_{t\leq T}: \begin{cases}
& 1. ~ \nu_0= \rho_0;\\
& 2.~ \nu_t \text{ is a density function; }\\
& 3. ~ \forall 1<q<\infty, ~\forall 0<t\leq T: ~~ t^{1-\frac{1}{q}}\|\nu_t\|_{L^q(\R^2)}<\infty; \\
& 4.~ \forall 0<t\leq T: \nu_t \text{ satisfies \eqref{linMildEQd2}}.
\end{cases}
\end{equation*}
Repeating the same arguments as in the proof of Theorem \ref{th:uniqKS}, one has the following lemma:
\begin{lemma}
\label{lemma:linmildeqUniq}
Let the assumptions of Theorem \ref{th:uniqKS} hold. Then, Equation~\eqref{linMildEQd2} admits a unique solution in the space $\mathcal{R}_{[0,T]}$.
\end{lemma}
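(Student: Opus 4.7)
The plan is to mimic the argument used for Theorem \ref{th:uniqKS}, but with a substantial simplification: since the drift $b(s,\cdot)$ is now fixed (rather than itself depending on the unknown), only the analogue of the term ``$I$'' in that proof appears, so no condition on $\chi$ beyond \eqref{chiCondUniq} is needed and no auxiliary estimate on $\nabla c^1-\nabla c^2$ must be derived.

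Concretely, suppose $\tilde{p}^1,\tilde{p}^2\in\mathcal{R}_{[0,T]}$ both satisfy \eqref{linMildEQd2}. Subtracting the two mild equations makes the free term $g_t\ast\rho_0$ disappear, so
\[
\tilde{p}^1_t-\tilde{p}^2_t=-\sum_{i=1}^2\int_0^t\nabla_i g_{t-s}\ast\bigl(b(s,\cdot)\,(\tilde{p}^1_s-\tilde{p}^2_s)\bigr)\,ds.
\]
I would fix $r\in(1,2)$ (the exact value of $r$ plays no essential role here) and introduce
\[
f(T):=\sup_{t\leq T}t^{1-\frac{1}{r}}\|\tilde{p}^1_t-\tilde{p}^2_t\|_{L^r(\R^2)},
\]
which is finite thanks to the $L^r$-estimate built into $\mathcal{R}_{[0,T]}$.

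The core step is the same contraction-type bound as in term $I$ of the proof of Theorem \ref{th:uniqKS}. Applying the convolution inequality \eqref{convIneq} with $\nabla_i g_{t-s}\in L^1(\R^2)$ and then H\"{o}lder's inequality in space in the form $\|b(s,\cdot)(\tilde{p}^1_s-\tilde{p}^2_s)\|_{L^r}\leq\|b(s,\cdot)\|_{L^\infty}\|\tilde{p}^1_s-\tilde{p}^2_s\|_{L^r}$, I would invoke \eqref{LrNormB} with $r=\infty$ (for which the constant behaves as $C(\|\nabla c_0\|_{L^2(\R^2)}+B_q(\chi))$, by the Remark \ref{rem:foruniq} applied to $(\rho_s)$) to obtain
\[
\|\nabla_i g_{t-s}\ast(b(s,\cdot)(\tilde{p}^1_s-\tilde{p}^2_s))\|_{L^r(\R^2)}\leq\frac{C\,(\|\nabla c_0\|_{L^2(\R^2)}+B_q(\chi))\,f(T)}{(t-s)^{\frac{1}{2}}\,s^{\frac{3}{2}-\frac{1}{r}}}.
\]
Since $r<2$ one has $\frac{3}{2}-\frac{1}{r}<1$, so Lemma \ref{lemma:stdComputation} applies and the time integral contributes a factor $t^{\frac{1}{r}-1}\beta(\tfrac{3}{2}-\tfrac{1}{r},\tfrac{1}{2})$. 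Multiplying by $t^{1-1/r}$ and taking the supremum over $t\leq T$ yields
\[
f(T)\leq C_0\,\chi\,(\|\nabla c_0\|_{L^2(\R^2)}+B_q(\chi))\,f(T),
\]
with a universal constant $C_0$ (possibly the same $C_0$ as in \eqref{chiCondUniq}, up to renaming). Under \eqref{chiCondUniq} the multiplier is strictly less than $1$, forcing $f(T)=0$ and hence $\tilde{p}^1\equiv\tilde{p}^2$.

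I do not anticipate any genuine obstacle: the only checks are the bookkeeping of exponents, namely that $\frac{3}{2}-\frac{1}{r}\in(\tfrac{1}{2},1)$ so the Beta integral converges, and that $\|b(s,\cdot)\|_{L^\infty}$ indeed satisfies the claimed bound—both of which are immediate from \eqref{LrNormB} and the density estimates already proved for $(\rho_s)$ in Section \ref{chd2:Regularization}. Note that, unlike in Theorem \ref{th:uniqKS}, no term with $\rho^2(s,\cdot)(\nabla c^1-\nabla c^2)$ appears; consequently, the auxiliary constraint $\frac{1}{r}+\frac{1}{q}<1$ needed there can be dropped and any $r\in(1,2)$ is permissible.
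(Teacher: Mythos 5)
Your proof is correct and takes essentially the same route as the paper, which simply asserts that the lemma follows by ``repeating the same arguments as in the proof of Theorem~\ref{th:uniqKS}''; you flesh this out accurately, correctly observing that since the drift $b$ is now fixed only the analogue of term $I$ survives, so the constraint $\frac{1}{r}+\frac{1}{q}<1$ is unnecessary. The exponent bookkeeping ($\frac{1}{2}<1$ and $\frac{3}{2}-\frac{1}{r}<1$ for $r<2$, yielding the factor $t^{\frac{1}{r}-1}\beta(\frac{3}{2}-\frac{1}{r},\frac{1}{2})$) and the identification of the contraction constant with the one in \eqref{chiCondUniq} are both as intended.
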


Now, note that for $0<s\leq t \leq T$ one has
\begin{multline*}
\tilde{p}_t=g_{t-s}\ast (g_s \ast \rho_0) -\sum_{i=1}^2 \int_0^s g_{t-s}\ast(\nabla_i g_{s-u} \ast (b(u,\cdot)\tilde{p}_u) )\,du \\- \sum_{i=1}^2 \int_s^t \nabla_ig_{t-u} \ast (b(u,\cdot)\tilde{p}_u) \,du. 
\end{multline*}
Therefore
\begin{equation}
\label{restartLinMildD2}
\tilde{p}_t=g_{t-s}\ast \tilde{p}_s - \sum_{i=1}^2 \int_s^t \nabla_ig_{t-u} \ast (b(u,\cdot)\tilde{p}_u) \,du.
\end{equation}
From here, for a $\nu \in \mathcal{R}_{[0,s]}$ we  define
\begin{equation}
\label{flow}
p_{s,t}^\nu= g_{t-s}\ast \nu_s - \sum_{i=1}^2 \int_s^t \nabla_ig_{t-u} \ast (b(u,\cdot)p_{s,u}^\nu) \,du.
\end{equation}
Now we define for any $0\leq s< T$ the space
\begin{equation*}
\mathcal{R}_{[s,T]}:= \{ (p_{s,t}^\nu)_{s\leq t\leq T}: \begin{cases}
& 1. ~ \nu \in \mathcal{R}_{[0,s]};\\
& 2. ~ \forall 0\leq t\leq T:  p_{s,t}^\nu \text{ is a  density function; }\\
& 3. ~\forall 1<q<\infty, ~\forall s\leq t\leq T: \\
&~~~~~~~~~~~~(t-s)^{1-\frac{1}{q}}\|p_{s,t}^\nu\|_{L^q(\R^2)}< \infty; \\
&4. ~\forall s\leq t\leq T: p_{s,t}^\nu \text{ satisfies \eqref{flow}}.
\end{cases}
\end{equation*}
In order to prove that two solutions to a martingale problem coincide, the idea in \cite{Trevisan} is to prove by induction that their finite dimensional marginals coincide. As, a priori, in \cite{Trevisan} one does not have the Markov's property for a solution of a martingale problem, the following properties of the family $(\mathcal{R}_{[s,T]})_{0\leq s \leq T}$ are needed to pass from $k$-dimensional marginals to $k+1$-dimensional marginals in the inductive procedure: 
\begin{lemma}
\label{Trevisan_properties}
For any $0\leq s \leq T$, the following two properties are satisfied:
\begin{description}
\item[Property 1] Let $(p_{s,t}^\nu)_{s\leq t\leq T} \in \mathcal{R}_{[s,T]}$ and let $(q_{s,t}^\nu)_{s\leq t\leq T}$ be a family of probability measures that satisfies \eqref{flow} and is such that $q_{s,t}^\nu\leq C p_{s,t}^\nu $ for $t\in [s,T]$. Then, $(q_{s,t}^\nu)_{s\leq t\leq T} \in \mathcal{R}_{[s,T]}$.
\item[Property 2] Let $r\leq s$ and $(q_{r,t}^\nu)_{r\leq t\leq T} \in \mathcal{R}_{[r,T]}$. Then, the restriction\\  $(q_{r,t}^\nu)_{s\leq t\leq T}$ belongs to $\mathcal{R}_{[s,T]}$.
\end{description}
\end{lemma}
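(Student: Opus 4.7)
The plan is to verify the four defining conditions of $\mathcal{R}_{[s,T]}$ in each case, the only nontrivial work being a concatenation argument for Property 2 based on the semigroup identity $g_{t-r}\ast g_{r-u} = g_{t-u}$.

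\textbf{Property 1.} Since $(p_{s,t}^\nu)$ and $(q_{s,t}^\nu)$ are driven by the same $\nu$, condition~1 in the definition of $\mathcal{R}_{[s,T]}$ holds for $q$ because it already holds for $p$. Condition~2 is part of the assumption (the $q_{s,t}^\nu$ are probability densities). Condition~4 is also part of the assumption (they satisfy \eqref{flow}). For condition~3 I would use the pointwise bound $q_{s,t}^\nu\le C\,p_{s,t}^\nu$ together with the positivity of $p_{s,t}^\nu$ to get, for any $1<r<\infty$ and $s<t\le T$,
\[
(t-s)^{1-\frac{1}{r}}\|q_{s,t}^\nu\|_{L^r(\R^2)}\le C\,(t-s)^{1-\frac{1}{r}}\|p_{s,t}^\nu\|_{L^r(\R^2)}<\infty,
\]
which is finite since $(p_{s,t}^\nu)\in\mathcal{R}_{[s,T]}$.

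\textbf{Property 2.} Let $\mu\in\mathcal{R}_{[0,r]}$ be the family associated to $(q_{r,t}^\nu)_{r\le t\le T}$ through \eqref{flow}, so that $q_{r,r}^\nu=\mu_r$. I would construct the candidate initial history for the restriction by concatenation:
\[
\tilde\nu_t := \begin{cases}\mu_t, & 0\le t\le r,\\ q_{r,t}^\nu, & r\le t\le s.\end{cases}
\]
Conditions 1--3 for $\tilde\nu\in\mathcal{R}_{[0,s]}$ are straightforward: $\tilde\nu_0=\mu_0=\rho_0$; each piece is a density; the $L^q$ bound on $(0,r]$ comes from $\mu\in\mathcal{R}_{[0,r]}$ and on $(r,s]$ from the fact that, for $t\in(r,s]$, the quantity $t^{1-1/q}\|q_{r,t}^\nu\|_{L^q}$ is controlled by $T^{1-1/q}(t-r)^{-(1-1/q)}\cdot(t-r)^{1-1/q}\|q_{r,t}^\nu\|_{L^q}$, both factors being finite by $(q_{r,t}^\nu)\in\mathcal{R}_{[r,T]}$.

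The heart of the argument is verifying condition~4, i.e.\ that $\tilde\nu_t$ solves \eqref{linMildEQd2} on $(0,s]$. For $t\le r$ this holds because $\mu\in\mathcal{R}_{[0,r]}$. For $t\in(r,s]$ I would start from the flow equation satisfied by $q_{r,\cdot}^\nu$,
\[
q_{r,t}^\nu=g_{t-r}\ast\mu_r-\sum_{i=1}^{2}\int_r^t\nabla_ig_{t-u}\ast(b(u,\cdot)q_{r,u}^\nu)\,du,
\]
and substitute the mild equation for $\mu_r$. Applying $g_{t-r}\ast$ commutes with the convolution in the integral term, and the key identity $g_{t-r}\ast g_{r-u}=g_{t-u}$ (Chapman--Kolmogorov for the heat semigroup) combines the two integrals into a single integral over $(0,t)$ against $\tilde\nu_u$, yielding \eqref{linMildEQd2}.

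With $\tilde\nu\in\mathcal{R}_{[0,s]}$ in hand, it remains to show that $(q_{r,t}^\nu)_{s\le t\le T}$ satisfies \eqref{flow} starting from $\tilde\nu_s=q_{r,s}^\nu$ at time $s$. I would use exactly the same identity: write the flow equation for $q_{r,t}^\nu$ out to time $t$, split the integral $\int_r^t=\int_r^s+\int_s^t$, and express $g_{t-r}\ast\mu_r$ as $g_{t-s}\ast(g_{s-r}\ast\mu_r)$; combining this with the flow equation at time $s$ one reads off
\[
q_{r,t}^\nu=g_{t-s}\ast q_{r,s}^\nu-\sum_{i=1}^{2}\int_s^t\nabla_ig_{t-u}\ast(b(u,\cdot)q_{r,u}^\nu)\,du,
\]
which is condition 4 for $\mathcal{R}_{[s,T]}$. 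Conditions 2--3 on $[s,T]$ are inherited from $(q_{r,t}^\nu)_{r\le t\le T}\in\mathcal{R}_{[r,T]}$, using $(t-s)^{1-1/q}\le(t-r)^{1-1/q}$ to transfer the density bounds.

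The main obstacle I anticipate is bookkeeping the time-weighted $L^q$ estimates: the weights change form when one shifts the base time from $r$ to $s$, and one must ensure that the finiteness of $(t-r)^{1-1/q}\|q_{r,t}^\nu\|_{L^q}$ indeed implies, for $t$ away from $r$ but close to $s$, the weaker requirement in terms of $(t-s)^{1-1/q}$. Apart from this routine but careful estimate, everything else reduces to the Chapman--Kolmogorov identity for the Gaussian kernel and the linearity of \eqref{linMildEQd2}.
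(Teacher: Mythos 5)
Your proposal is correct and follows essentially the same route as the paper: Property~1 by transferring the weighted $L^r$ bound through the pointwise domination $q_{s,t}^\nu\leq C\,p_{s,t}^\nu$ (the paper phrases this via duality and Riesz representation to first obtain the density, which your domination also yields immediately), and Property~2 by splitting the Duhamel integral at time $s$ and using the semigroup identity $g_{t-s}\ast g_{s-u}=g_{t-u}$ together with $(t-s)^{1-1/q}\leq(t-r)^{1-1/q}$. Your explicit construction of the concatenated history $\tilde\nu\in\mathcal{R}_{[0,s]}$ is a point of bookkeeping the paper leaves implicit, but it is the same argument.
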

\begin{proof}
\textbf{Property 1} Let $s\in[0,T]$, $(p_{s,t}^\nu)_{s\leq t\leq T} \in \mathcal{R}_{[s,T]}$ and let\\ $(q_{s,t}^\nu)_{s\leq t\leq T}$ be a family of probability measures that satisfies \eqref{flow} and is such that $q_{s,t}^\nu \leq C p_{s,t}^\nu $ for $t\in [s,T]$. We should prove that $(q_{s,t}^\nu)_{s\leq t\leq T} \in \mathcal{R}_{[s,T]}$. As for $t\in [s,T],$ we have $q_{s,t}^\nu\leq  C p_{s,t}^\nu $  then for a test function $f\in C_K(\R^2)$ one has
$$|\int f(x) q_{s,t}^\nu (dx)| \leq C  |\int f(x) p_{s,t}^\nu (x)dx|.$$
Let $q>1$ and $q'>1$ such that $\frac{1}{q}+\frac{1}{q'}=1$. As $(p_{s,t}^\nu)_{s\leq t\leq T} \in \mathcal{R}_{[s,T]}$, one has
$$|\int f(x) q_{s,t}^\nu (dx)| \leq C \|f\|_{L^{q'}(\R^2)}\|p_{s,t}^\nu\|_{L^q(\R^2)}\leq \frac{C}{(t-s)^{1-\frac{1}{q}}} \|f\|_{L^{q'}(\R^2)}.$$
By Riesz representation theorem, $q_{s,t}^\nu$ is absolutely continuous with respect to Lebesgue's measure. We still denote its probability density by $q_{s,t}^\nu$ and conclude
$$\|q_{s,t}^\nu\|_{L^q(\R^2)}\leq \frac{C}{(t-s)^{1-\frac{1}{q}}}.$$ 
 Therefore, $(q_{s,t}^\nu)_{s\leq t\leq T} \in \mathcal{R}_{[s,T]}$.

 \textbf{Property 2} Let $r\leq s$ and $(q_{r,t}^\nu)_{r\leq t\leq T} \in \mathcal{R}_{[r,T]}$. We should prove that the restriction~$(q_{r,t}^\nu)_{s\leq t\leq T}$ belongs to $\mathcal{R}_{[s,T]}$. Let $t\geq s$. Notice that
\begin{multline*}
q_{r,t}^\nu = g_{t-s}\ast (g_{s-r} \ast \nu_r) -\sum_{i=1}^2 g_{t-s}\ast \int_r^s \nabla_i g_{s-u} \ast (b(u,\cdot)q_{r,t}^\nu  ) \ du \\ -\sum_{i=1}^2 \int_s^t \nabla_i g_{t-u}\ast(b(u,\cdot)q_{r,t}^\nu) \ du.
\end{multline*} 
 Therefore, for $t\in[s,T]$ one has
  $$q_{r,t}^\nu = g_{t-s}\ast q_{r,s}^\nu  -\sum_{i=1}^2 \int_s^t \nabla_i g_{t-u}\ast(b(u,\cdot)q_{r,t}^\nu) \ du.$$
  In addition, for $t\in[s,T]$ and $r\leq s$, one has 
  $$(t-s)^{1-\frac{1}{m}}\|q_{r,t}^\nu\|_{L^m(\R^2)}\leq (t-r)^{1-\frac{1}{m}}\|q_{r,t}^\nu\|_{L^m(\R^2)}\leq C.$$ Thus the restriction $(q_{r,t}^\nu)_{s\leq t\leq T}$ belongs to $\mathcal{R}_{[s,T]}$.
\end{proof}
 We are ready to state the result \cite[Lemma 2.12]{Trevisan} in our framework:
 \begin{lemma}
 As $\mathcal{R}:= (\mathcal{R}_{[s,T]})_{0\leq s \leq T}$ satisfies the properties in Lemma~\ref{Trevisan_properties}, the following conditions are equivalent:
 \begin{enumerate}[i)]
\item for every $s\in [0,T]$ and $\bar{\nu} \in \mathcal{R}_{[0,s]}$, there exists at most one $\nu \in \mathcal{R}_{[s,T]}$ with $\nu_s= \bar{\nu}_s$.
\item  for every $s\in [0,T]$, if $\Q^1$ and $\Q^2$ to solutions to $(LMP)$ starting from $s$ with $\Q^1_s=\Q^2_s$, then $\Q^1=\Q^2$.
 \end{enumerate}
 \end{lemma}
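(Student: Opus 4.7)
The plan is to establish the equivalence \(i)\Leftrightarrow ii)\) by specialising Trevisan's Lemma 2.12 to our family \(\mathcal{R}\). The common ingredient is the correspondence between solutions of \hyperref[defMP2dLIN]{$(LMP)$} on \([s,T]\) and elements of \(\mathcal{R}_{[s,T]}\): applying Itô's formula to test functions \(f\in C_K^2(\R^2)\) shows that the time-marginals of any solution \(\Q\) to \hyperref[defMP2dLIN]{$(LMP)$} satisfy \eqref{flow}, and the integrability requirement is delivered by Definition \ref{defMP2dLIN}(ii); conversely, the superposition principle of Figalli--Trevisan produces, for every \(\nu\in\mathcal{R}_{[s,T]}\), a solution of \hyperref[defMP2dLIN]{$(LMP)$} whose marginals are the \(\nu_t\), its integrability hypothesis being satisfied thanks to the drift estimate \eqref{LrNormB}.

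The direction \(ii)\Rightarrow i)\) is then short: given \(\nu^1,\nu^2\in\mathcal{R}_{[s,T]}\) sharing their initial marginal, the superposition principle yields solutions \(\Q^1,\Q^2\) of \hyperref[defMP2dLIN]{$(LMP)$} starting at \(s\) with identical initial law, and \(ii)\) forces \(\Q^1=\Q^2\), hence \(\nu^1=\nu^2\). For \(i)\Rightarrow ii)\) I would proceed by induction on \(k\geq 1\), showing that any two solutions \(\Q^1,\Q^2\) of \hyperref[defMP2dLIN]{$(LMP)$} starting at \(s\) with \(\Q^1_s=\Q^2_s\) have identical \(k\)-dimensional marginals. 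The base case \(k=1\) follows directly from \(i)\), since the time-marginals lie in \(\mathcal{R}_{[s,T]}\) with the same initial value and satisfy \eqref{flow}. For the inductive step with times \(s\leq t_1<\dots<t_k<t_{k+1}\leq T\), the idea is to disintegrate each \(\Q^i\) along the path up to \(t_k\): by a classical argument for martingale problems, the regular conditional law of \((w_t)_{t\geq t_k}\) given \(\F_{t_k}\) is, for almost every path, a solution of \hyperref[defMP2dLIN]{$(LMP)$} restarted at \(t_k\) with the observed position as initial condition. Its time-marginal at \(t_{k+1}\) satisfies \eqref{flow}; Property 2 of Lemma \ref{Trevisan_properties} guarantees that the restriction of the unconditional marginal belongs to \(\mathcal{R}_{[t_k,T]}\), and Property 1 allows one to absorb into the same class the conditional density, which is dominated by the unconditional one up to the normalisation coming from the joint marginal at \(t_k\). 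Hypothesis \(i)\) applied on \([t_k,T]\) then identifies the two conditional \(t_{k+1}\)-marginals, and integrating against the common joint distribution at \((t_1,\dots,t_k)\), equal under the inductive hypothesis, closes the induction.

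The main obstacle I expect is measure-theoretic rather than analytic: one has to verify rigorously that regular conditional distributions genuinely solve a linear martingale problem with the same drift \(b\), that the \(L^r\)-bounds of Definition \ref{defMP2dLIN}(ii) hold for the conditional marginals for almost every conditioning path (this is precisely where the domination estimate feeding Property 1 is used), and that the flow relation \eqref{flow} is preserved under conditioning. Once these structural points are in place, the inductive scheme outlined above is routine and the equivalence follows without any further use of the specific form of the drift beyond \eqref{LrNormB}.
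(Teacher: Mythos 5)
The paper does not prove this lemma at all: it is quoted verbatim from Trevisan \cite{Trevisan} (his Lemma 2.12), and the paper's only task is to verify that the family $\mathcal{R}$ satisfies the two structural properties of Lemma~\ref{Trevisan_properties} and that \eqref{flow} has a unique solution in each $\mathcal{R}_{[s,T]}$. You have instead attempted to reconstruct Trevisan's proof. Your direction $ii)\Rightarrow i)$ (superposition principle plus uniqueness in law) is sound, modulo checking the integrability hypothesis of the superposition theorem, which the bound $\|b(t,\cdot)\|_{L^\infty(\R^2)}\leq C/\sqrt{t}$ from \eqref{LrNormB} indeed delivers.

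The direction $i)\Rightarrow ii)$ as you set it up has a genuine gap in the inductive step. You disintegrate $\Q^i$ along $\F_{t_k}$ and claim that the conditional marginals are ``dominated by the unconditional one up to the normalisation coming from the joint marginal at $t_k$''; this is false in general. The regular conditional law given $w_{t_k}=x$ has time-$t$ marginal essentially equal to a transition density started from the point $x$, and there is no constant $C$ (uniform in the conditioning path) with $q^{\mathrm{cond}}_{t_k,t}\leq C\,p_{t_k,t}$. Consequently Property~1 of Lemma~\ref{Trevisan_properties} cannot be invoked to place the conditional flows in $\mathcal{R}_{[t_k,T]}$, and hypothesis $i)$ cannot be applied to them. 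Trevisan's argument circumvents exactly this: instead of conditioning, one reweights, setting $\tilde\Q^i:=Z_i^{-1}\,\phi_1(w_{t_1})\cdots\phi_k(w_{t_k})\,\Q^i$ for bounded nonnegative $\phi_j$. These reweighted measures still solve the linear martingale problem restarted at $t_k$, and now the domination $\tilde\Q^i_t\leq Z_i^{-1}\bigl(\prod_j\|\phi_j\|_\infty\bigr)\Q^i_t$ holds with a genuine constant, which is precisely the form of domination Property~1 is tailored to. The induction hypothesis gives $Z_1=Z_2$ and $\tilde\Q^1_{t_k}=\tilde\Q^2_{t_k}$, and $i)$ applied on $[t_k,T]$ then identifies the $t_{k+1}$-marginals. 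Replacing your conditioning step by this reweighting repairs the argument; as written, it does not go through.
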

 To apply the preceding lemma in the sense $i)$ implies $ii) \text{ for } s=0$, it remains to check that for a fixed $\nu \in \mathcal{R}_{[0,s]}$ the equation \eqref{flow} admits a unique solution in  $\mathcal{R}_{[s,T]}$. In order to do so, repeat the same as in the proof of Lemma \ref{lemma:linmildeqUniq} to get the uniqueness of \eqref{flow}. As the constants do not depend on $t, T$, one gets the same condition on $\chi$ for the uniqueness.
 We, thus, conclude the uniqueness of solutions to \hyperref[defMP2dLIN]{$(LMP)$} starting in $s=0$ from $\rho_0$. Then, we conclude the uniqueness of solutions to \hyperref[defMP2d]{$(MP)$}.

\section{Appendix }\label{sec:1chsmooth}
Let $T >0$. On a filtered probability space $(\Omega, \F, \P, (\F_t) )$ equipped with a $d$--dimensional Brownian motion $(W)$ and an $\F_0-$measurable random variable $X_0$, we study the stochastic equation
\begin{equation}
\label{genRegNLSDE0}
\begin{cases}
&dX_t= dW_t  + \Big\{\int_0^t \int_{\R^d} L(t-s, X_t-y)\Q_s(dy) \ ds\Big\} dt , \quad t\leq T,\\
& \Q_s :=  \mathcal{L}( X_{s}),\quad X_0 \sim q_0,
\end{cases}
\end{equation}
where $L$ maps $ [0,T] \times \R^d$ to $\R^d$. In this section we show how to adapt the Proof of Theorem~$1.1$ in \cite{Sznitman} in the framework of the additional time interaction in \eqref{genRegNLSDE0}.

Firstly, the assumption about the regularity of the interaction kernel in~\cite{Sznitman} needs to be replaced by the following hypothesis on the interaction $L$:
\begin{hyp}[H0]
The function  $ L: [0,T] \times \R^d \to \R^d$ satisfies 
\begin{align*}
& \forall (t,x)\in (0,T)\times \R^d, ~~~   |L(t,x)|\leq h_1(t),\\
&  \forall (t,x,y)\in (0,T)\times \R^d \times \R^d, ~~~   |L(t,x) - L(t,y)| \leq h_2(t) |x-y|,
\end{align*}
where $h_i: (0,T) \to \R^+$ is such that there exists $D_T>0$ such that for any $t\leq T$, one has $\int_0^t h_i(s)ds \leq D_T$.
\end{hyp}
Note that the time interaction induces a slight change in (H0) with respect to what is assumed on the interaction kernel in \cite{Sznitman}. We still assume the kernel is bounded and Lipshitz in space, but in order to treat the additional integral in time, we introduce the functions $h_1$ and $h_2$.
 
Let $\Cc:= C((0,T); \R^d)$ be a set of continuous $\R^d$-valued functions defined on $(0,T)$ and $\Pp_T$ be the set of probability measures on $\Cc$.
  For a $\Q\in \Pp_T$ and $(t,x)\in (0,T)\times \R^d$ denote by 
$$b(t,x;(\Q_s)_{s\leq t}):= \int_0^t\int_{\R^d} L(t-s, x-y)\Q_s(dy)~ds.$$
In view of Hypothesis (H0), for a given $\Q\in\Pp_T$ and any $(t,x,y)\in (0,T)\times \R^d \times \R^d$ one has that 
\begin{equation}
\label{smoothLsigma}
\begin{cases}  |b(t,x;(\Q_s)_{s\leq t})|\leq D_T, \\
 |b(t,x;(\Q_s)_{s\leq t})-b(t,y;(\Q_s)_{s\leq t})|\leq D_T|x-y|.
 \end{cases}
\end{equation}
\begin{theorem}
\label{th:smoothInter}
Under the hypothesis (H0), Equation \eqref{genRegNLSDE0} admits a unique strong solution.
\end{theorem}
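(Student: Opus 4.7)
The plan is to follow the Picard / Banach fixed--point strategy of Sznitman~\cite{Sznitman}, the only new ingredient being that the interaction at time $t$ involves the whole past $(\Q_s)_{s\leq t}$ of the candidate law rather than the single marginal $\Q_t$. This non-Markovian feature is accommodated thanks to the integrability bound $\int_0^t h_i(s)\,ds \leq D_T$ in (H0), which reproduces uniform-in-$t$ estimates of the same form as in the classical time-marginal setting of \cite{Sznitman}.

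First I would define, for any $\Q\in\Pp_T$, the Picard image $\Phi(\Q)\in\Pp_T$ as the law of the unique strong solution $X^\Q$ to
\[
dX^\Q_t = dW_t + b(t,X^\Q_t;(\Q_s)_{s\leq t})\,dt,\qquad X^\Q_0 = X_0 .
\]
By \eqref{smoothLsigma} the drift is bounded by $D_T$ and $D_T$-Lipschitz in $x$, uniformly in $t$ and in $\Q$, so classical Lipschitz SDE theory produces existence and pathwise uniqueness of $X^\Q$. Any fixed point of $\Phi$ then solves \eqref{genRegNLSDE0}, and conversely any strong solution of \eqref{genRegNLSDE0} is a fixed point.

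Next I would equip $\Pp_T$ with the bounded Wasserstein-type path distance
\[
\mathcal{D}_T(\Q^1,\Q^2) := \inf_{\pi} \int \Bigl(\sup_{s\leq T}|x_s-y_s|\Bigr)\wedge 1 \ d\pi(x,y),
\]
where $\pi$ ranges over couplings of $\Q^1,\Q^2$; this is a complete metric on $\Pp_T$. To establish a contraction on a small interval, I would couple $X^{\Q^1}$ and $X^{\Q^2}$ through the same Brownian motion and the same initial datum, split
\[
|b(t,X^{\Q^1}_t;\Q^1) - b(t,X^{\Q^2}_t;\Q^2)| \leq \int_0^t h_2(t-s)\,|X^{\Q^1}_t - X^{\Q^2}_t|\,ds + \int_0^t h_2(t-s)\,\mathcal{D}_s(\Q^1,\Q^2)\,ds,
\]
using that $z\mapsto L(t-s,x-z)$ is $h_2(t-s)$-Lipschitz in the second term, then take $\E[\sup_{r\leq t}(\cdot)\wedge 1]$ and close the inequality by Gronwall. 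The bound $\int_0^t h_2\leq D_T$ yields
\[
\mathcal{D}_T(\Phi(\Q^1),\Phi(\Q^2)) \leq C(T)\,\mathcal{D}_T(\Q^1,\Q^2),\qquad C(T)\to 0 \text{ as } T\to 0,
\]
so $\Phi$ is a strict contraction for sufficiently small $T$; existence and uniqueness of a fixed point follow from Banach's theorem.

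Finally I would globalise by iterating on slices $[t_k,t_{k+1}]$ chosen short enough that each local step is a contraction: on the $k$-th slice the already constructed law $(\Q_s)_{s\leq t_k}$ is injected into $b$ as a known time-dependent forcing, and only the restriction $(\Q_s)_{s\in[t_k,t_{k+1}]}$ needs to be solved for, under an SDE of exactly the same shape as on $[0,T]$. Uniqueness on $[0,T]$ then follows slice by slice. The main obstacle I anticipate is checking that the time-convolution in the drift does not spoil the contraction, namely that the non-local term $\int_0^t h_2(t-s)\,\mathcal{D}_s(\Q^1,\Q^2)\,ds$ is indeed dominated by $D_T\,\mathcal{D}_T(\Q^1,\Q^2)$ uniformly in $t$; this is precisely the content of (H0) and is the only place where the hypothesis on $h_2$ intervenes.
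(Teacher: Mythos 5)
Your proposal is correct and follows essentially the same route as the paper: a Sznitman-style fixed point for the map $\Phi$ sending $m$ to the law of the linear SDE with frozen past marginals, a coupling/Gronwall contraction estimate in the truncated Wasserstein distance on path space, and hypothesis (H0) to absorb the time convolution (the paper iterates the integral inequality \eqref{usefulLemma} rather than slicing $[0,T]$, but this is an immaterial variant). One small point to tighten: in your drift-difference bound the Lipschitz constant $h_2(t-s)$ alone controls the \emph{untruncated} distance $|w^1_s-w^2_s|$, so to dominate by the truncated metric $\mathcal{D}_s$ you must combine it with the boundedness hypothesis, i.e.\ use $|L(t,x)-L(t,y)|\leq (2h_1(t)+h_2(t))\,(|x-y|\wedge 1)$ exactly as the paper does.
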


Let us show how the calculations in \cite{Sznitman} change in this setting. We adopt the definition of the Wasserstein distance given in \cite[Eq. (1.4), p.173]{Sznitman}. 
\begin{proof}[Proof of Theorem \ref{th:smoothInter}]
To prove the claim, one should search for a fixed point of the map $\Phi:\Pp_T \to \Pp_T$ that to a given $m\in \Pp_T$ associates the law of the solution to the following SDE:
\begin{equation*}
\begin{cases}
&dX_t= dW_t  + b(t,X_t;(m_s)_{s\leq t}) dt,\\
&  X_0 \sim p_0.
\end{cases}
\end{equation*}
Notice that this equation is well-defined in strong sense thanks to \eqref{smoothLsigma} (see e.g. \cite[Thm. 5.2.9]{KaratzasShreve}). To exhibit the fixed point, the following contraction inequality should be shown for $m_1, m_2 \in \Pp_T$:\begin{equation}
\label{usefulLemma}
D_{1,t}(\Phi(m_1),\Phi(m_2)) \leq C_T \int_0^t  D_{1,u}(m_1,m_2) du.
\end{equation}
To prove the latter, follow the steps in \cite{Sznitman}. Always use (H0) when dealing with the time interaction.
 Denote by $X_1$ and $X_2$ the processes whose laws are $\Phi(m_1)$ and $\Phi(m_2)$.

Then, taking $\pi$ to be any coupling of $m_1$ and $m_2$, it comes
\begin{multline*}
    \E \ [ \sup_{s\leq t} |X_s^1 - X_s^2|] \leq \int_0^t \int_0^u \int_{\Cc\times \Cc} \Big |L(u-\alpha, X_u^1-w_\alpha^1)\\-L(u-\alpha, X_u^2-w_\alpha^2)\Big| d\pi (w^1,w^2) \ d\alpha \ du.
\end{multline*}
In view of (H0), one has
\begin{multline*}
   | L(u-\alpha, X_u^1-w_\alpha^1)-L(u-\alpha, X_u^2-w_\alpha^2)|\\ \leq (2 h_1(u-\alpha)+h_2(u-\alpha)) |X_u^1-w_\alpha^1- X_u^2+w_\alpha^2|\wedge 1.
\end{multline*}

Use that  $|X_u^1-X_u^2|\leq \sup_{r\leq u} |X_r^1-X_r^2|$ and $|w_\alpha^1-w_\alpha^2| \leq \sup_{r\leq u} |w_r^1-w_r^2|$ and apply Fubini's theorem in combination with integrability properties of $h_1$ and $h_2$. It comes
\begin{multline*}
  \E \ [ \sup_{s\leq t} |X_s^1 - X_s^2|]  \leq C_T \big [  \int_0^t \E [\sup_{r\leq u}  |X_r^1-X_r^2| \wedge 1]du \\+ \int_0^t  \int_{\Cc \times \Cc}\ \sup_{r\leq u} |w_r^1-w_r^2|\wedge 1 \  d\pi (w^1,w^2) du\big] .
\end{multline*}
Now, take an infimum over all couplings $\pi$ of $m_1$ and $m_2$. Afterwards, apply Gronwall's lemma. 
%
As $X^1$ and $X^2$ have laws $\Phi(m_1)$ and $\Phi(m_2)$, respectively, a standard  property of the Waserstein distance together with the preceding relation lead to the contraction inequality \eqref{usefulLemma}.
Once \eqref{usefulLemma} is obtained one repeats the arguments in~\cite{Sznitman} to finish the proof.
\end{proof}

\bibliography{biblio}
\end{document}